\documentclass[11pt,a4paper,reqno]{amsart}
\usepackage{cancel}
 \usepackage[utf8]{inputenc}
 \usepackage[english]{babel}
 \usepackage[normalem]{ulem}
\usepackage{hyperref}
\usepackage{color,latexsym,amssymb,amsmath,amsthm,amsfonts,enumerate,pdfsync,float,lmodern,multicol,paralist,graphics,graphicx,epsfig,tikz,dsfont,mathrsfs,esint,enumitem,pifont,cleveref,mathtools,textcomp,subcaption}

\usepackage[left=2cm,right=2cm,top=2.25cm,bottom=2.25cm]{geometry}
\usepackage{mathrsfs}

\usepackage{subcaption}
\usepackage{footmisc}

\usepackage{pifont}

\usepackage{pgfplots}
\pgfplotsset{compat=1.18}
\usetikzlibrary{patterns}
\usepackage[title]{appendix}
\usepackage{bbm}
\usepackage{dsfont}

\newtheorem{theorem}{Theorem}[section]
\newtheorem{lemma}[theorem]{Lemma}

\newtheorem{remark}[theorem]{Remark}

\newtheorem{hypothesis}[theorem]{Hypothesis}

\newtheorem{proposition}[theorem]{Proposition}

\makeatother

\numberwithin{equation}{section}

\usepackage{cleveref}
\crefname{fig}{Figure}{Figure}
\crefname{lemma}{Lemma}{Lemmas}
 \def\j{{\mathcal{J}}}
 \def\D{{\mathcal{D}}}
\def\R{{\mathbb{R}}}

\newcommand{\dd}{\,\mathrm d}

\title[Propagation in integro-differential equations of ignition type]{Propagation rates in integro-differential \\ equations of ignition type}

\author{\'Emeric Bouin}
\address[E. Bouin]{CEREMADE - Université Paris-Dauphine, PSL Research University, UMR CNRS 7534, Place du Mar\'echal de Lattre de Tassigny, 75775 Paris Cedex 16, France.}
\email{bouin@ceremade.dauphine.fr}

\author{Jérôme Coville}
\address[J. Coville]{UR 546 Biostatistique et Processus Spatiaux, INRAE, Domaine St Paul Site Agroparc, F-84000 Avignon, France.}
\address{ICJ UMR 5208 - Universite Claude Bernard Lyon 1, Campus de la Doua, 69622 Villeurbanne, France.}
\email{jerome.coville@inrae.fr}

\author{Xi Zhang}
\address[X. Zhang]{School of Mathematics, Changsha University, Changsha, Hunan 410022, People's Republic of China.}
\address{School of Mathematics and Statistics, HNP-LAMA, Central South University, Changsha, Hunan 410083, People's Republic of China.}
\email{xizhangmath@gmail.com}

\begin{document}
\begin{abstract}
 This paper provides sharp rates of invasion in nonlinear integro-differential equations of ignition type. The framework covers both
integrable convolution kernels and singular
fractional-type kernels. While the emergence of a threshold on the decay $1+2s$ of the jump kernel is expected from earlier works, far fewer quantitative results were available in such a general framework. In particular, in addition to the spreading rates, our study provides information on the shape of the invasion profiles. Importantly, we obtain the first sharp spreading estimate in the
critical case $s=\frac12$, which separates linear-in-time and accelerated regimes.
 \end{abstract}

    \subjclass[2010]{60J60,35Q84,82C40,35B27,60K50,60G52,76P05}

    	\maketitle

\tableofcontents

\section{Introduction}

Reaction–dispersion equations such as,
\begin{equation*}
\begin{cases}
    \partial_t u -\mathcal{D}[u]=f(u),&\text{on }\R^+\times \R,\\
    u(0,\cdot)=u_0,
\end{cases}
\end{equation*}
where $\D$ is the generator of a Lévy process, provide a fundamental framework for modelling the spread of biological, chemical, economic or physical phenomena in an environment. A central question for equations of this
type is the rate at which solutions propagate starting from a front-like initial datum $u_0$, and how this speed is influenced by the interplay between dispersal mechanisms and nonlinear reaction terms. While classical results for standard diffusion, such as those in the Fisher–KPP setting, predict propagation at a constant asymptotic speed \cite{AronsonWeinberger1975,Fisher1937,KPP1937},
it is now well understood that nonlocal dispersal effects can dramatically alter this picture, leading in some cases to accelerated spreading (see \textit{e.g.}\cite{Garnier2011,MR3817762} among many others).

Obtaining quantitative estimates of the spreading speed is thus nontrivial and reveals the interplay between the generator $\D$ and the form of the nonlinearity $f$. For bistable and monostable nonlinearities, the picture is now fairly clear. Indeed, propagation at finite speed is the only possibility when the nonlinearity is bistable, and we refer to \cite{bouin2024} for an up-to-date review.  For monostable nonlinearities, clear (optimal) dichotomies between constant speed and acceleration regimes have been obtained, see \cite{Garnier2011,MR3817762,cabre2013influence,Alfaro2017,Bouin2021,Coville2021,Gui2015,zhang2023optimal}.  The situation is less clear for nonlinearities of ignition type. Apart from the linear spreading situation (see \cite{Mellet2009,Chen1997,coville2007travelling,Shen2017,Shen2017a}) and the recent construction of fronts for a generic $\D$ \cite{bouin2024} which are well documented, the accelerated spreading situation is poorly understood. For
$\D=-(-\Delta)^s$ with $s<1/2$ and a generic $f$, sharp acceleration estimates
were obtained independently in \cite{Coville2021,zhang2023optimal}. A more general
result by two of the authors, subject to a restriction on the ignition
threshold, appears in \cite{Bouin2021ign}. Importantly, there are no results on spreading estimates for operators comparable
to $-(-\Delta)^{\frac{1}{2}}$, highlighting some gaps in previous approaches.

In this work, we investigate further propagation phenomena in reaction–dispersion models combining ignition-type nonlinearities with nonlocal dispersal operators driven by fat-tailed kernels. Our aim is to completely classify spreading rates for a broad class of operators $\D$. Ignition nonlinearities are characterised by a threshold effect: the reaction term vanishes below a critical value and becomes active only above it. This is typical of combustion phenomena. In local diffusion models, this typically leads to travelling fronts with finite speed. In contrast, fat-tailed kernels—whose slow decay at infinity allows for long-range interactions—are known to induce acceleration in monostable equations, raising the question of whether and how such effects persist in the presence of an ignition threshold.

We consider the following integro-differential ignition equations
\begin{equation}\label{ceq}
\left\{
\begin{aligned}
    &\partial_t u -\mathcal{D}[u]=f(u),&\R^+\times \R,\\
    &u(0,\cdot)=u_0,
\end{aligned}
\right.
\end{equation}
where the operator $\mathcal{D}[\cdot]$ is given by 
 \begin{equation*}
\mathcal{D}[\varphi]=P.V.\int_\R [\varphi(\cdot-y)-\varphi(\cdot)]J(y)dy,
 \end{equation*}
with $J$ as follows.
\begin{hypothesis}\label{kernel_hypothesis}
Let the kernel $J:\R^\star \mapsto [0,\infty)$ be a measurable even function. It satisfies the following conditions:
\begin{itemize}
    \item[(i)] Algebraic tails: For some $s>0$, there exist constants $\j_0>0$ and $R_0\ge 1$ such that 
     \begin{equation*}
    J\ge \j_0^{-1}|\cdot|^{-1-2s}\quad\text{on }[R_0,+\infty) \quad \text{and }\ J\le \j_0|\cdot|^{-1-2s}\quad \text{on }[1,+\infty).
     \end{equation*}
       \item[(ii)] Localized second moment boundedness: There exists a positive constant $\j_1$ such that
     \begin{equation*}
    \int_{|z|\le 1}z^2J(z) \dd z \le 2\j_1.
     \end{equation*}
\end{itemize}
\end{hypothesis}
Observe, for example, that with the sign convention used above, if
 \begin{equation*}
J(y)=c_{1,s}|y|^{-1-2s},
 \end{equation*}
then, up to the normalisation constant,
 \begin{equation*}
\mathcal D=-(-\Delta)^s.
 \end{equation*}

\noindent The nonlinearity satisfies the following hypothesis.
\begin{hypothesis}\label{hypo_f}
The function $f\in \mathscr{C}^{0,1}([0,1];\R)$ is an ignition nonlinearity if there is $\theta\in(0,1)$ such that 
   \begin{equation*}
       f=0 \quad\text{on }[0,\theta]\cup\{1\}\quad\text{and }\quad f>0\quad\text{on } (\theta,1).
   \end{equation*}
\end{hypothesis}
\noindent The Cauchy problem is complemented with a front-like initial datum.
\begin{hypothesis}\label{hy-initial-data}
    The initial data $u_0:\R\to [0,1]$ is uniformly continuous and satisfies
     \begin{equation*}
    \theta_0\mathds{1}_{(-\infty,0]}\le u_0\le \mathds{1}_{(-\infty,\xi]},
     \end{equation*}
    for some $\theta_0\in (\theta ,1]$ and $\xi\in\R^+$.
\end{hypothesis}
Our objective is to understand the mechanisms governing propagation in this setting and to determine whether accelerated spreading can occur despite the threshold structure of the nonlinearity. We establish quantitative estimates on the position of level sets and identify regimes in which the heavy tails of the dispersal kernel overcome the inhibiting effect of the ignition threshold.

To describe the behaviour of propagation, we introduce the following notation 
\begin{equation}\label{levelsets}
X_\lambda(t;\Phi)=\sup\left\{x\in\mathbb{R}:\Phi(t,x)\ge \lambda\right\}.
\end{equation}
Throughout the paper, $u$ denotes the unique bounded mild solution associated with the symmetric Lévy semigroup generated by $\D$. We only use the following standard properties of this solution: positivity and comparison principle with bounded piecewise smooth sub- and supersolutions. A discussion about well-posedness and comparison statement may be found in \cite{bouin2021sharp}.

The main result of this paper is the following. 
\begin{theorem}\label{thm_all_case}
Let $u$ be a solution to the Cauchy problem \eqref{ceq}, assuming Hypotheses \ref{kernel_hypothesis}, \ref{hypo_f} and \ref{hy-initial-data}. For any $\lambda\in(0,1)$ and $t$ large enough, 
\begin{equation*}
    X_\lambda(t;u) \asymp_\lambda\footnote{The notation $X\asymp_{\Lambda_1, \Lambda_2,\dots} Y$ means that there exists a positive constant $C_{\Lambda_1, \Lambda_2,\dots}$, depending only on some parameters $\Lambda_1$, $\Lambda_2$,\dots, such that $C_{\Lambda_1, \Lambda_2,\dots} Y\le X \le C^{-1}_{\Lambda_1, \Lambda_2,\dots} Y$.} \begin{cases}
        t^\frac{1}{2s}, & 0<s<\frac{1}{2},\\[5pt]
        t \ln(t), & s=\frac{1}{2},\\[5pt]
        t, &  s>\frac{1}{2}.
    \end{cases}
\end{equation*}
\end{theorem}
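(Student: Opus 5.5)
The proof splits into an upper bound and a lower bound on $X_\lambda(t;u)$, each obtained by comparison with explicit sub- and supersolutions. Within each bound we treat the three regimes $s<\frac12$, $s=\frac12$ and $s>\frac12$ separately.

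\textbf{Upper bound.} Since $f$ is Lipschitz with $f(0)=0$, we have $f(u)\le L u$, and since $u\le 1$ we also have $f(u)\le L\min(u,1-u)$ up to constants. For $s<\frac12$ we compare $u$ with the supersolution $\bar u(t,x)=\min\{1, C e^{Lt}v(t,x)\}$, where $v$ solves the linear problem $\partial_t v=\mathcal D[v]$ with datum $\mathds 1_{(-\infty,\xi]}$.

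This crude bound gives exponential acceleration, which is too weak. We must exploit the ignition threshold: the reaction is active only where $u>\theta$. So we build a supersolution of the form
\begin{equation*}
\bar u(t,x)=\min\Bigl\{1,\ \theta' + A\,(x-\sigma(t))_+^{-2s}\Bigr\}\ \text{for } x>\sigma(t),
\end{equation*}
with $\theta'\in(\theta,1)$ chosen so that the part of the profile lying below $\theta$ feels no reaction. More precisely, the tail $(1+(x-\sigma(t))_+)^{-2s}$ is made to sit below $\theta$, so that there $\bar u$ only needs to satisfy $\partial_t\bar u\ge\mathcal D[\bar u]$. We then estimate $\mathcal D[\bar u]$ at $x$ using the kernel bounds of Hypothesis~\ref{kernel_hypothesis}. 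The far-field contribution coming from the region where $\bar u=1$ is of order $(x-\sigma)^{-2s}$, and the near-field contribution is controlled by $\j_1$ and the second derivative. Matching $\sigma'(t)\,\partial_x\bar u$ against these terms determines $\sigma$.

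This leads to the ODE $\sigma'\sim \sigma^{1-2s}$, which gives $\sigma\sim t^{1/(2s)}$ for $s<\frac12$. For $s>\frac12$ the nonlocal flux is integrable and $\sigma'$ is bounded, giving $\sigma\sim t$. For $s=\frac12$ the log-divergent integral $\int_1^{\sigma}z^{-1}\,dz$ produces $\sigma'\sim\ln\sigma$, hence $\sigma\sim t\ln t$. Near the level set, where $\bar u\ge\theta$, we absorb $f$ by taking the plateau region of $\bar u$ equal to $1$, and handle the intermediate region by a steep monotone interpolation whose time derivative dominates $L$.

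\textbf{Lower bound.} We build compactly supported or front-shaped subsolutions that stay above $\theta$ on a growing region $(-\infty,\underline\sigma(t)]$. Behind this region the reaction term $f(\underline u)\ge c>0$ compensates the losses. The mass leaking forward through the kernel tail gives a gain $\mathcal D[\underline u](x)\gtrsim \j_0^{-1}\,\underline\sigma$-dependent quantities: of order $\int_{x-\underline\sigma}^\infty z^{-1-2s}\,dz$ for $s<\frac12$, and a logarithmic sum in the critical case. This gain allows the level set to jump forward at the predicted rate. For $s>\frac12$ we instead use the existence of traveling fronts with positive speed for ignition nonlinearities, taken from \cite{bouin2024}, to get linear propagation. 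For $s\le\frac12$ we iterate, either in a discrete-time scheme or through an ODE on $\underline\sigma$. At each step the contribution $t\int_{d}^{\infty}J \gtrsim t\,d^{-2s}$ lifts $u$ above $\theta_0>\theta$ at distance $d\sim \underline\sigma$, and the ignition reaction then drives $u$ up to near $1$ in $O(1)$ time.

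\textbf{Main obstacle.} The hardest part is the critical case $s=\frac12$, in both directions. Neither the algebraic-tail mechanism of the case $s<\frac12$ nor the traveling-wave mechanism of the case $s>\frac12$ applies, and the $\ln t$ factor must be captured with matching constants up to a multiplicative factor. I would first try a self-similar profile $\phi\bigl(x/(t\ln t)\bigr)$ adjusted by an additive $(\cdot)^{-1}$ tail, and verify the logarithmic balance $\sigma'\asymp\ln\sigma$ carefully. This requires separating the kernel integral into $|z|\le1$, $1\le|z|\le\sigma$ and $|z|>\sigma$. The lower-bound version needs a uniform lower estimate on $u$ behind the front of order $\theta_0$, obtained via a persistence lemma.
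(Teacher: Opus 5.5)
Your overall strategy (explicit barriers in each regime, known fronts for $s>\frac12$) is the right one. However, both barriers you describe fail exactly where the paper's work lies.

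\textbf{Upper bound.} Taken literally, $\min\{1,\theta'+A(x-\sigma)^{-2s}\}$ with $\theta'>\theta$ never goes below $\theta'$. Far ahead, $f(\bar u)\to f(\theta')>0$ while $\partial_t\bar u\to0$, so it is not a supersolution. It also could not bound $X_\lambda$ for $\lambda\le\theta'$. The reading suggested by your next sentence is a plateau $1$ up to $\sigma(t)$, then a tail $A(x-\sigma(t))^{-2s}$ below $\theta$ with $A$ fixed. That fails too. Here $\partial_t\bar u=2sA\sigma'(x-\sigma)^{-1-2s}$. On the other hand, as in the proof of Proposition~\ref{prop-D[phi]}~II with $\xi_1=\sigma$ and $B_2=x-\sigma$, jumps into the plateau give $\mathcal{D}[\bar u](x)\ge c(x-\sigma)^{-2s}-O((x-\sigma)^{-4s}\ln(x-\sigma))$. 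Hence $\partial_t\bar u<\mathcal{D}[\bar u]$ once $x-\sigma\gg\max\{\sigma',1\}$. Balancing $\sigma'|\partial_x\bar u|$ against this flux would force $\sigma'\gtrsim x-\sigma$ for every $x$, so the ODE $\sigma'\sim\sigma^{1-2s}$ does not follow from your ansatz. More fundamentally, since $f\ge0$, $u$ lies above the linear flow, which behaves like $t\,x^{-2s}$ for $x\gg t^{1/(2s)}$ (one big jump). So any admissible tail must have an amplitude growing linearly in time. The paper takes $w_r=\frac{\theta}{4}Z^{2s}(t)[x-X_{\theta/4}(t)+Z(t)]^{-2s}$ with $Z\asymp t^{1/(2s)}$ (Lemma~\ref{lemma-Zt2}). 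The $Z'$-term then gives $\partial_t w_r\ge\zeta_0\mu^{-\frac{2s}{1-2s}}[x-X_{\theta/4}+Z]^{-2s}$, which beats Proposition~\ref{prop:dec2} for $\mu$ small. For $s=\frac12$ the analogue is $w_r=\frac{\theta}{8}Z/(x-X_{\theta/8}+Z)$ with $Z\asymp t\ln t$. The slope of this tail at its top is only $\asymp1/Z$, so a middle piece $w_m$ is needed to connect it to the steep reactive piece $w_l$, where $\partial_t w_l\gtrsim w_l^2$ dominates $f(w_l)\le\rho w_l^2$. Every junction must be $\mathscr{C}^1$, hence locally $\mathscr{C}^{1,1}$ by Lemma~\ref{lemma-c11}. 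This is because you need an \emph{upper} bound on $\mathcal{D}$, and under Hypothesis~\ref{kernel_hypothesis}~(ii) alone a convex kink can contribute $+\infty$. Your steep interpolation addresses none of this.

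\textbf{Lower bound.} For $s<\frac12$ your lifting heuristic points the right way but is not an argument. The linear flow does not maintain a plateau, and a genuine subsolution must absorb the dispersal losses. That is why the paper smooths the concave corner with the $\mathscr{C}^2$ map $m$ of Lemma~\ref{lem:smooth-truncation}, keeps the junction with the tail convex, and again uses a tail of amplitude $\propto t$, namely $\phi_r=\kappa^{2s}\theta_\varepsilon t\,(x-t^{1/(2s)})^{-2s}$.

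In the critical case the gap is real. One step of your scheme gives a gain $t\int_d^\infty J\asymp t/d$, hence $d\lesssim t$, and iterating (even over dyadic time blocks) only yields $\underline\sigma\gtrsim t$. A self-similar profile $\phi(x/(t\ln t))$ cannot be a subsolution either. For $J=c|z|^{-2}$ and $L=t\ln t$, scaling gives $\mathcal{D}[\phi(\cdot/L)](x)=L^{-1}(\mathcal{D}\phi)(x/L)=O(1/(t\ln t))$. Meanwhile $\partial_t[\phi(x/L)]\approx|\phi'(y)|\,y/t$, so the inequality fails by a factor $\ln t$ wherever $\phi<\theta$ has an $O(1)$ slope.

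The paper's mechanism is a logarithmically flat profile. Here $\phi_m$ is affine in $[x-X_{\theta-\varepsilon}(t)+t^{1/4}]^{-1/\ln t}$, and $\phi_r=-\frac{4(\theta-\varepsilon)}{3\ln t}\ln(1-\frac{rt\ln t}{x})$, which is $\approx\frac{4}{3}(\theta-\varepsilon)\frac{rt}{x}$ for $x\gg t\ln t$. This gives $|\partial_x\underline u|\lesssim[(x-X_{\theta+2\varepsilon})\ln t]^{-1}$. As a result:
\begin{itemize}
\item in the middle zone, the first-moment loss $\j_0|\partial_x\underline u|\ln B_2$ of Proposition~\ref{prop-D[phi]}~II is $O(\varepsilon/(x-X_{\theta+2\varepsilon}))$;
\item in the tail, $\underline u$ varies by only $O(1/\ln t)$ over distances $O(x-X_{\theta+2\varepsilon})$;
\item $\partial_t\underline u=O((\varepsilon+\theta r)/(x-X_{\theta+2\varepsilon}))$, even though the front moves at speed $\asymp r\ln t$.
\end{itemize}
All of these are controlled by the plateau gain $\asymp(\Theta-\theta)/(\j_0(x-X_{\theta+2\varepsilon}))$ once $\varepsilon$ and $r$ are small (Propositions~\ref{prop-phi} and~\ref{prop-D[phi]-s=1/2}). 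Without such a construction the $t\ln t$ lower bound is not reached.
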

Some comments are now in order. The situation of linear spreading is actually rather well known and will not be discussed in this paper; it is included only to complete the trichotomy. It can be deduced from \cite{Gui2015,bouin2024}. The new contributions of the present paper concern $s\leq\frac{1}{2}$. This paper improves, generalises, and complements previous works, either giving sharper rates, simpler proofs, or more quantitative information on the shape of the invasion front. Finally, it is worth mentioning specifically that the case $s=\frac{1}{2}$ had not previously been covered and that we derive sharp estimates and information on the invasion profiles. 

\begin{figure}[H]
     \centering
\begin{tikzpicture}
    \begin{axis}[
    axis x line=bottom,
    axis y line=none,
    xmin = 0,
    xmax = 1.4,
    ymin = 0,
    ymax = 1,
    xtick = {0,1/2,1.38},
    xticklabels = {0,$\frac{1}{2}$,$s$},
    ytick = \empty,
    ]
 
 \fill[
 pattern=checkerboard,   
 pattern color=green,
pattern=north west lines,
] (0.5,0) rectangle (2,1); 

\fill [
        domain=0:1, 
        samples=20,
        pattern color=red,
        pattern=north east lines,
        fill opacity=0.3,
]  (0,0) rectangle (0.5,1);
 \draw[red, line width=8,opacity=0.9] (axis cs:0.5,0) -- (axis cs:0.5,1); 

\node[anchor= center] at (axis cs:0.25,0.6) {$\textcolor{black!50!red}{\bf X(t)\asymp t^\frac{1}{2s}}$};
\node[anchor= center] at (axis cs:0.25,0.4) {\textcolor{black!60!red}{\ding{203}}};

\node[anchor=center] at (axis cs:1,0.6) {$\textcolor{black!50!green}{\bf X(t)\asymp t}$};

\node[anchor=center] at (axis cs:1,0.4) {\textcolor{black!60!green}{\ding{202}}};

\node[anchor=center] at (axis cs:0.5,0.5) {\rotatebox{-90}{$\textcolor{black!50!blue}{\bf X(t)\asymp t\ln t}$}};

\end{axis}
\end{tikzpicture}    \caption{Summary of the results: rates of propagation for the algebraic kernels $J$ satisfying Hypothesis \ref{kernel_hypothesis}. In the green zone \textcolor{black!60!green}{\ding{202}}, the solution to \eqref{ceq} propagates at a linear-in-time rate. In the red zone \textcolor{black!60!red}{\ding{203}}, acceleration occurs: $X_\lambda(t)\asymp t^\frac{1}{2s}$. In the critical case $s=\frac{1}{2}$, acceleration occurs: $X_\lambda(t)\asymp t\ln t$.}
    \label{diagram}
 \end{figure}

The origin of the three propagation regimes can be understood through the truncated first moment of the jump kernel,
 \begin{equation*}
M_1(R):=\int_1^R yJ(y)\,dy.
  \end{equation*}
Under \Cref{kernel_hypothesis},
 \begin{equation*}
M_1(R)\asymp R^{1-2s}\quad\text{if }0<s<\frac12,
 \qquad
 M_1(R)\asymp\ln R\quad\text{if }s=\frac12,
  \end{equation*}
whereas $M_1(R)$ remains bounded when $s>\frac12$. If $R(t)$ denotes the characteristic width and $X(t)$ the rough location of a propagation profile, then heuristically $|\partial_xu|$ is of order
$R(t)^{-1}$, while $|\partial_tu|$ is of order $X'(t)R(t)^{-1}$. The contribution of the dispersal operator is thus roughly of order $M_1(X(t))R(t)^{-1}$ by a first order approximation of the integral. Balancing these two quantities leads to
 \begin{equation*}
 X'(t)\asymp X(t)^{1-2s},\qquad
 X'(t)\asymp\ln X(t),\qquad
 X'(t)\asymp1
 \end{equation*}
 in the three respective regimes, and hence to the scales
  \begin{equation*}
X(t)\asymp t^{\frac{1}{2s}},\qquad
X(t)\asymp t\ln t,\qquad
X(t)\asymp t.
 \end{equation*}
This heuristic also explains why the critical case cannot be obtained
by merely setting $s=\frac12$ in the subcritical construction. Of course, this heuristic is not related to the form of the nonlinearity: the main contribution of the paper is to
prove that these heuristic scales remain valid for
ignition nonlinearities. Furthermore, \Cref{fig:sol} provides a numerical illustration of \Cref{thm_all_case}. The numerical results clearly reveal the transition from accelerated propagation for $s=0.25$ and $s=0.5$ to propagation with a linear-in-time speed for $s=0.75$. In particular, the curves displayed in the second row of \Cref{fig:sol} progressively approach a nearly horizontal profile, providing further numerical evidence for the asymptotic propagation regimes predicted by \Cref{thm_all_case}.

\begin{figure}[H]
\centering 
    \begin{subfigure}[b]{0.3\textwidth} 
        \includegraphics[width=\textwidth]{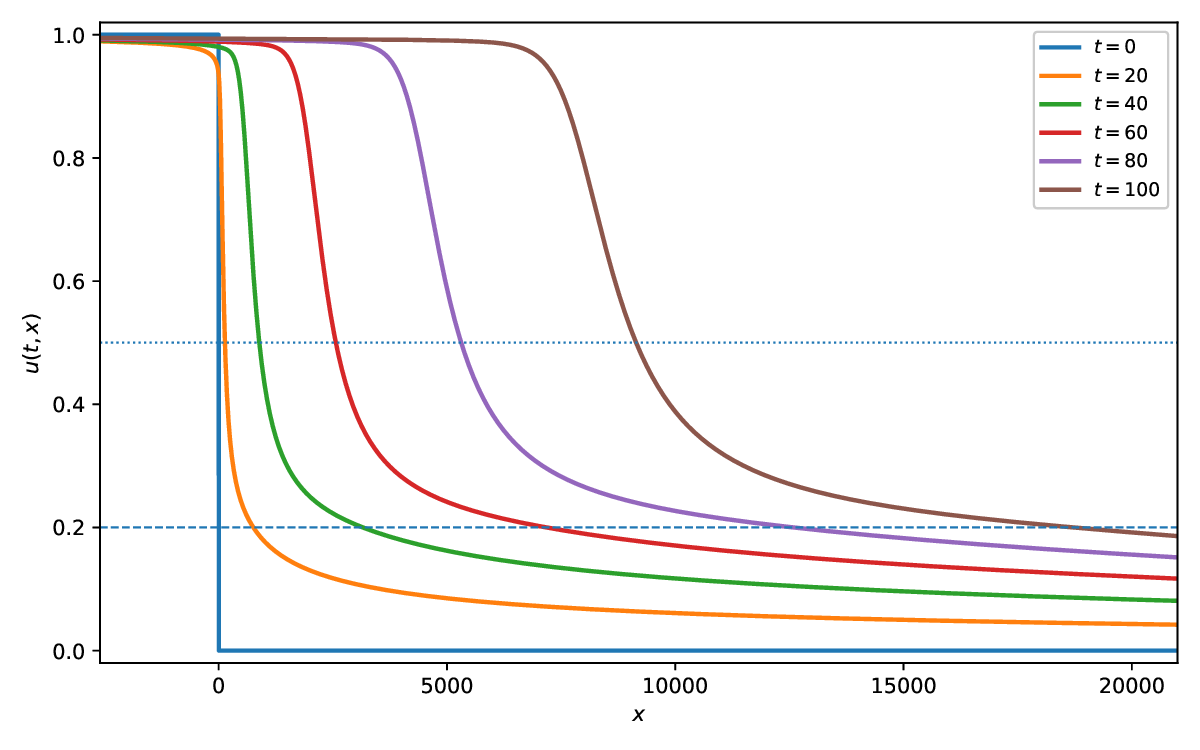} \caption{$s=0.25$} 
    \end{subfigure} 
    \quad
    \begin{subfigure}[b]{0.3\textwidth}  
        \includegraphics[width=\textwidth]{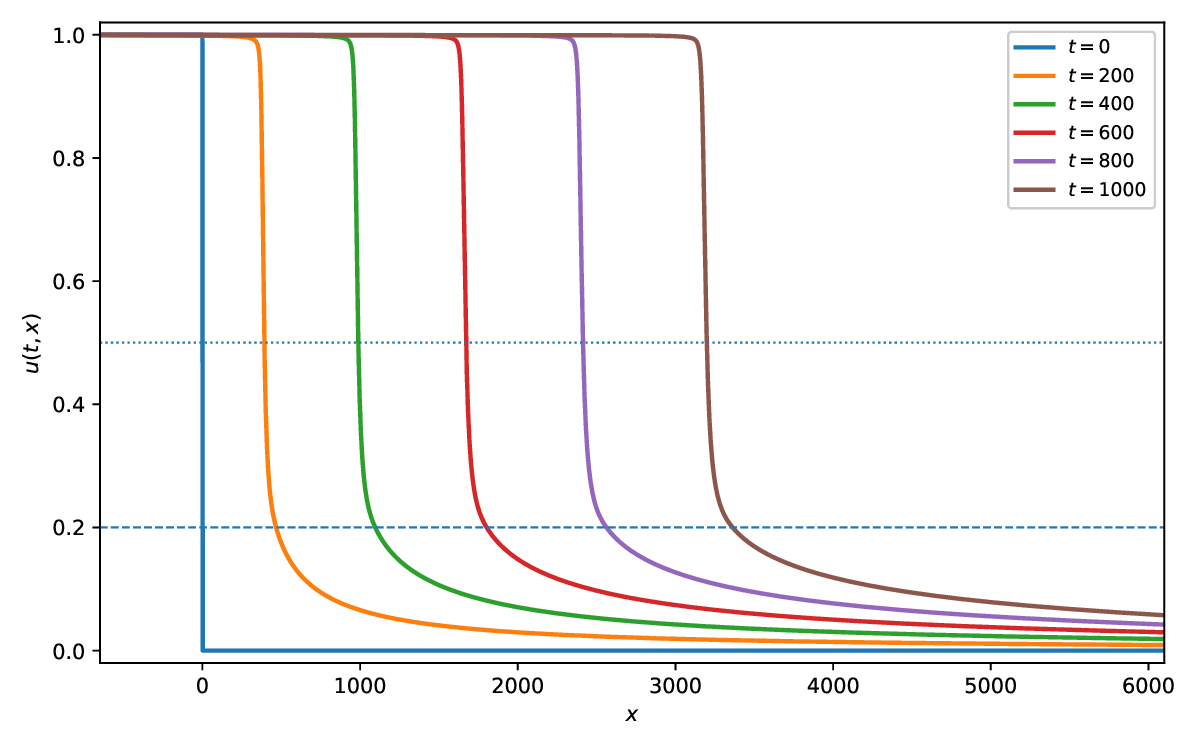} \caption{$s=0.50$}  
    \end{subfigure} 
       \quad
    \begin{subfigure}[b]{0.3\textwidth}  
        \includegraphics[width=\textwidth]{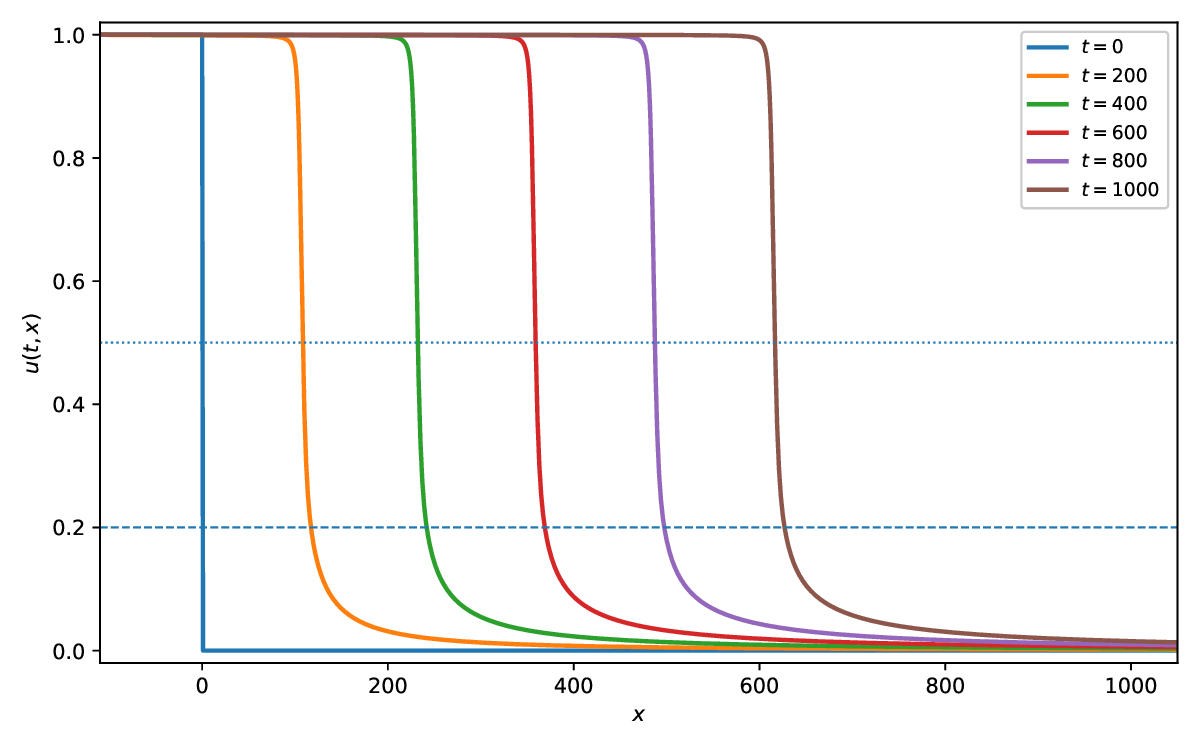} \caption{$s=0.75$}  
    \end{subfigure}  \\
       \begin{subfigure}[b]{0.3\textwidth} 
        \includegraphics[width=\textwidth]{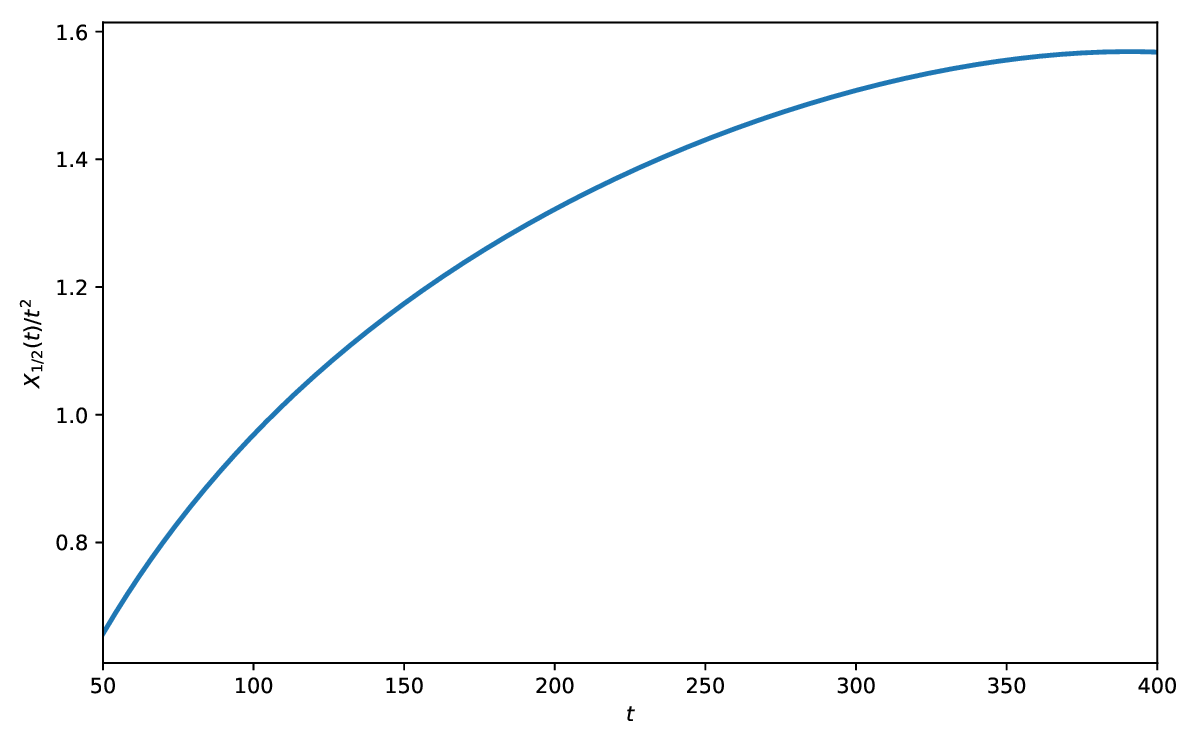} \caption{$\frac{X_{0.5}(t)}{t^2},\quad s=0.25$} 
    \end{subfigure} 
    \quad
    \begin{subfigure}[b]{0.3\textwidth}  
        \includegraphics[width=\textwidth]{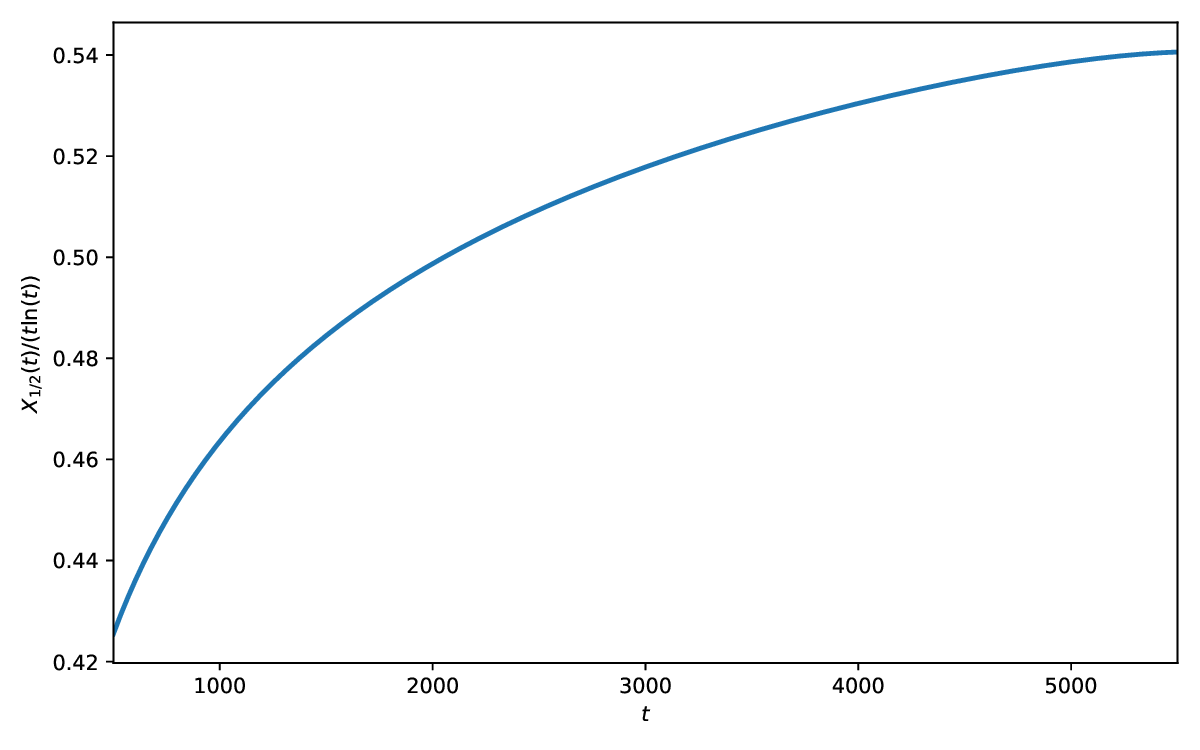} \caption{$\frac{X_{0.5}(t)}{t\ln(t)},\quad s=0.50$}  
    \end{subfigure} 
       \quad
    \begin{subfigure}[b]{0.3\textwidth}  
        \includegraphics[width=\textwidth]{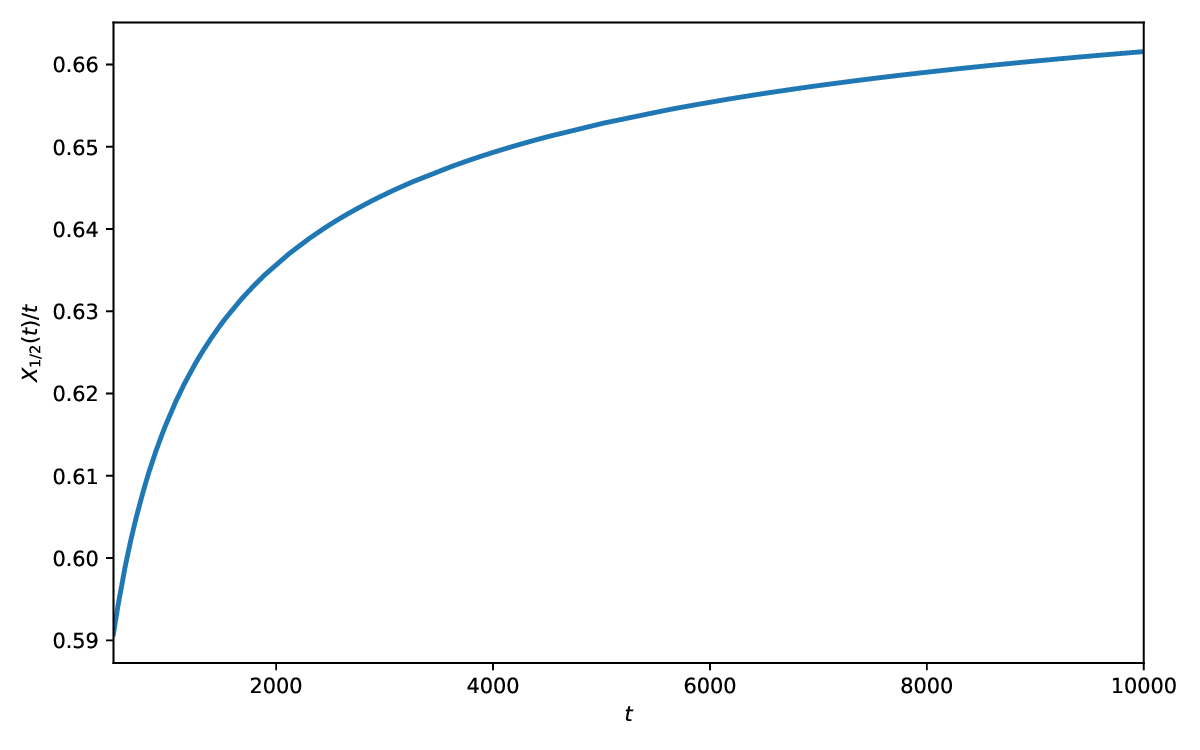} \caption{$\frac{X_{0.5}(t)}{t},\quad s=0.75$}  
    \end{subfigure} 
\caption{ \small
Numerical simulation to \eqref{ceq} with the initial datum $u_0 =\mathds{1}_{(-\infty,0]}$ and the ignition nonlinearity $f=(\cdot-0.2)^2(1-\cdot)\mathds{1}_{[0.2,1]}$. The dispersal kernel is chosen as
        $J(x) =\frac{\Gamma\left(s+\frac12\right)}{\sqrt{\pi}\,\Gamma(s)}(1+x^2)^{-\frac{1+2s}{2}}$ so that $\int_{\mathbb R}J(x)\,dx=1$ and
    $J(x)\sim \frac{\Gamma\left(s+\frac12\right)}{\sqrt{\pi}\,\Gamma(s)}|x|^{-1-2s}$ as $|x|\to\infty$.
    The top row shows the solution profiles $x \mapsto u(t,x)$ at different times $t$ for $s=0.25$, $s=0.5$, and $s=0.75$, respectively. The bottom row shows the evolution in time of the corresponding normalised front positions $\frac{X_{0.5}(t)}{t^2}$, $\frac{X_{0.5}(t)}{t\ln(t)}$, and $\frac{X_{0.5}(t)}{t}$. 
}
\label{fig:sol}
\end{figure} 

We now comment more precisely on the difficulties and novelties of the paper. To prove matching lower and upper bounds, we construct pairs of matching piecewise sub- and super- solutions, respectively. A schematic plot to help the reader may be found in \Cref{fig:scheme}.

\begin{figure}[H]
     \centering
\begin{tikzpicture}
\colorlet{supercolor}{green!55!black}
\colorlet{subcolor}{blue!75!black}

\pgfmathsetmacro{\thetaLevel}{0.72}
\pgfmathsetmacro{\ThetaLevel}{0.85}
\pgfmathsetmacro{\thetazero}{0.98}

\pgfmathsetmacro{\XA}{2.55}   
\pgfmathsetmacro{\XB}{4.20}   
\pgfmathsetmacro{\XC}{6.80}  
\pgfmathsetmacro{\Xmax}{9.60}
\pgfmathsetmacro{\yA}{2.0}
\pgfmathsetmacro{\yB}{0.40}
\pgfmathsetmacro{\yC}{0.20}
\pgfmathsetmacro{\mB}{-0.12}
\pgfmathsetmacro{\Lone}{\XB-\XA}
\pgfmathsetmacro{\Ltwo}{\XC-\XB}
\pgfmathsetmacro{\aone}{(\yA+\mB*\Lone-\yB)/(\Lone*\Lone)}
\pgfmathsetmacro{\bone}{\mB-2*\aone*\Lone}
\pgfmathsetmacro{\atwo}{(\yC-\yB-\mB*\Ltwo)/(\Ltwo*\Ltwo)}
\pgfmathsetmacro{\btwo}{\mB}
\pgfmathsetmacro{\mC}{2*\atwo*\Ltwo+\btwo}
\pgfmathsetmacro{\alphaR}{-(\mC)/\yC}
\pgfmathsetmacro{\alphaFast}{0.70}
\pgfmathsetmacro{\alphaSlow}{0.001}
\pgfmathsetmacro{\tailWeight}{(\alphaFast-\alphaR)/(\alphaFast-\alphaSlow)}

\pgfmathsetmacro{\XSUBTwo}{1.40}   
\pgfmathsetmacro{\XSUBOne}{3.95}   
\pgfmathsetmacro{\XSUBThree}{6.35}
\pgfmathsetmacro{\XSUBEnd}{9.60}

\pgfmathsetmacro{\ySubLeft}{\ThetaLevel}
\pgfmathsetmacro{\ySubOne}{0.29}
\pgfmathsetmacro{\ySubThree}{0.145}
\pgfmathsetmacro{\ySubEnd}{0.115}

\pgfmathsetmacro{\mSubLeft}{0.0}
\pgfmathsetmacro{\mSubOne}{-0.10}

\pgfmathsetmacro{\hConv}{\XSUBThree-\XSUBOne}
\pgfmathsetmacro{\aConv}{(\ySubThree-\ySubOne-\mSubOne*\hConv)/(\hConv*\hConv)}

\pgfmathsetmacro{\hTail}{\XSUBEnd-\XSUBThree}
\pgfmathsetmacro{\mSubTailStart}{-0.012}
\pgfmathsetmacro{\aTail}{(\ySubEnd-\ySubThree-\mSubTailStart*\hTail)/(\hTail*\hTail)}

\pgfmathsetmacro{\thetaepsLevel}{\ySubThree} 

\begin{axis}[
    axis lines=left,
    width=13.1cm,
    height=7.8cm,
    xmin=0.3,
    xmax=\Xmax,
    ymin=-0.0,
    ymax=2.18,
    xtick={\XSUBTwo,\XA},
    xticklabels={$X_\lambda$,$X_2$},
    xticklabel style={font=\footnotesize,align=center,yshift=1pt},
     ytick={0,\thetaLevel,\ThetaLevel,\yA},
    yticklabels={$0$,$\theta$, $\lambda$,$2$},
    yticklabel style={font=\footnotesize,text width=0.1cm,align=left,xshift=-3pt},
    samples=280,
    clip=false,
    declare function={
      wl(\x)=\aone*(\x-\XA)^2+\bone*(\x-\XA)+\yA;
      wm(\x)=\atwo*(\x-\XB)^2+\btwo*(\x-\XB)+\yB;
      wr(\x)=\yC*((1-\tailWeight)*exp(-\alphaFast*(\x-\XC))+\tailWeight*exp(-\alphaSlow*(\x-\XC)));
      h00(\t)=2*(\t)^3-3*(\t)^2+1;
      h10(\t)=(\t)^3-2*(\t)^2+(\t);
      h01(\t)=-2*(\t)^3+3*(\t)^2;
      h11(\t)=(\t)^3-(\t)^2;
      t1(\x)=(\x-\XSUBTwo)/(\XSUBOne-\XSUBTwo);
      subsegA(\x)=h00(t1(\x))*\ySubLeft + h10(t1(\x))*(\XSUBOne-\XSUBTwo)*\mSubLeft + h01(t1(\x))*\ySubOne + h11(t1(\x))*(\XSUBOne-\XSUBTwo)*\mSubOne;
      subsegB(\x)=\aConv*(\x-\XSUBOne)^2+\mSubOne*(\x-\XSUBOne)+\ySubOne;
      subtail(\x)=\aTail*(\x-\XSUBThree)^2+\mSubTailStart*(\x-\XSUBThree)+\ySubThree;
    }
]

\addplot[supercolor,line width=1.8pt,domain=0.3:\XA] {\yA};
\addplot[supercolor,line width=1.8pt,domain=\XA:\XB] {wl(x)};
\addplot[supercolor,line width=1.8pt,domain=\XB:\XC] {wm(x)};
\addplot[supercolor,line width=1.8pt,domain=\XC:\Xmax] {wr(x)};
\addplot[only marks,mark=*,mark size=2.5pt,supercolor] coordinates {(\XA,\yA) (\XB,\yB) (\XC,\yC)};
\node[supercolor, font=\large,anchor=south west] at (axis cs:3.05,1.18) {$\overline{u}$};

\addplot[subcolor,line width=1.8pt,domain=0.30:\XSUBTwo] {\ySubLeft};
\addplot[subcolor,line width=1.8pt,domain=\XSUBTwo:\XSUBOne] {subsegA(x)};
\addplot[subcolor,line width=1.8pt,domain=\XSUBOne:\XSUBThree] {subsegB(x)};
\addplot[subcolor,line width=1.8pt,domain=\XSUBThree:\XSUBEnd] {subtail(x)};
\addplot[only marks,mark=*,mark size=2.5pt,subcolor] coordinates {(\XSUBTwo,\ySubLeft) (\XSUBOne,\ySubOne)};
\node[subcolor,font=\large, anchor=north west] at (axis cs:3.00,0.35) {$\underline{u}$};
\draw[dashed,supercolor!65] (axis cs:\XA,0) -- (axis cs:\XA,\yA);

\draw[dashed,subcolor!65] (axis cs:\XSUBTwo,0) -- (axis cs:\XSUBTwo,\ySubLeft);
\end{axis}
\end{tikzpicture}
    \caption{Schematic plot of the sub- and supersolutions. More accurate drawings are displayed in \Cref{fig:diagram-subsolution} and \Cref{fig:piecewise-supersolution}, where all the definitions of parameters are given.}
    \label{fig:scheme}
\end{figure}
The subsolutions are front-like and composed of a flat left part, a leading-edge profile which behaves like the integrated tail of the jump kernel $J$ (that is $\int_{y}^{+\infty} J(z)  \dd z  \asymp y^{-2s}$), and a middle part that connects the two former pieces. 
The choice of tail is dictated by the fact that the ignition nonlinearity vanishes far ahead of the propagating front. Since the tail flattens with time due to the acceleration (see for example \cite{bouin2023simple}), the middle part is a slowly varying transition layer whose width increases with time to connect the back and the front parts of the subsolution properly. A schematic view is displayed in \Cref{fig:diagram-subsolution}. An important issue in the construction is the control of the dispersal operator acting on the subsolution. To avoid a blow-up from below, it is mandatory to smooth out the concave corner between the left flat part and the middle profile, and ensure that the right junction between the middle profile and the tail is convex. These requirements add supplementary difficulties. 

The supersolutions also have this piecewise structure, but require four parts. This time, to avoid a blow up from above of the dispersal part, we have to work with smooth enough (typically $\mathscr{C}^{1,1}$) profiles. This adds significant complications compared to related works on the topic. Note that a full $\mathscr{C}^2$ regularity is not required since $s\leq \frac12$: local $\mathscr{C}^{1,1}$ regularity is sufficient because the second differences $\delta_z$ are then at most of order $z^2$, which is exactly compatible with the localised second-moment assumption in \Cref{kernel_hypothesis}. This is crucial in our approach. The far left part is still flat, at a higher level (typically $2$, but any value larger than one would fit). It is linked to a left part, which has the form of a solution to a reactive ODE in time, with the good spreading rate in space: typically the nonlinearity is active in this region and plays the main role. The edge part has the expected decay for the linearised problem around zero. A major difficulty is to find a middle part that joins the left and the right parts and has the desired slopes in space at the two junctions (so that the Lipschitz regularity of the supersolution is ensured). A schematic view is displayed in \Cref{fig:piecewise-supersolution}.

Let us insist on the fact that supersolutions are more difficult to make than subsolutions is due to the asymmetry between sign requirements between the two, and this is related to the difference of regularity targeted between the two. For a subsolution, we are looking for a bound from below on $\D[u]$. A concave corner may create an infinite negative contribution and is therefore incompatible with the lower estimates needed for a subsolution. Thus concave zones have to be (at least) $\mathscr{C}^2$. By contrast, a convex corner has the good (positive) sign in a lower estimate and can be kept. For a supersolution, we look for a bound from above on $\D[u]$. The situation is reversed: a convex corner can produce an uncontrolled positive contribution. It is therefore necessary to connect the pieces with continuity of the derivative and obtain local regularity $\mathscr{C}^{1,1}$. Since the supersolution is really likely to be convex at infinity in space, this is the reason for the more elaborate four-piece construction of the upper barrier.

Note that constructions in the case $s=\frac12$ are more involved because of particular consequences of the logarithmic growth of the truncated moment. This is what leads us to a separate treatment: note in particular that in this case, the spreading rate is $t \ln(t)$ whereas the profile has shape $\frac{x}{t}$ at the edge. This is an additional subtlety of this boundary regime.  

We believe that our construction is broader, more precise and simpler than the one in \cite{zhang2023optimal} that deals with a particular case only. Moreover, the construction of accurate sub- and supersolutions gives some insight of the shape of the true solution, since they have comparable behaviours in time.

It is worth mentioning that the fact that the sub- and supersolutions we construct are valid for eventually large (but finite) times only is not an obstacle. Indeed, following the approach in \cite{bouin2024acceleration} and employing the parabolic comparison principle together with the flattening estimate from \cite{bouin2023simple}, one may obtain an estimate of the solution of the Cauchy problem at a later time which is compatible with the subsolutions. A very simple estimate from above shows that this is also the case for the supersolutions, and the full spreading result of \Cref{thm_all_case} then follows. Since the argument is standard, we omit the details of this additional argument in the rest of the paper. Moreover, it is worth recalling that once the solution of the Cauchy problem \eqref{ceq} has reached a level strictly above the ignition threshold on a sufficiently large interval, it reaches any prescribed level $\lambda<1$ after a uniformly bounded additional time: we do omit the details of this step later on.

The paper is organised as follows. In Section 2, we provide basic estimates on the operator $\D$. Section 3 is devoted to proving \Cref{thm_all_case} when $s < \frac{1}{2}$. Section 4 is devoted to proving \Cref{thm_all_case} when $s = \frac{1}{2}$. 

\section{Preliminaries: basic estimates on \texorpdfstring{$\D$}{D}}

\subsection{Estimates from below}
Since we aim to estimate $\mathcal{D}[\cdot]$ acting on a function that may have convex corners (rather than assuming global $\mathscr{C}^2$ regularity), we refine the estimate obtained in \cite{bouin2021sharp,bouin2024acceleration} as follows.

\begin{proposition}\label{prop-D[phi]} Assume that $v:\mathbb{R}\to [0,1]$ is decreasing, and there are $\xi_1,\xi_2\in \mathbb{R}$ such that $\xi_2-\xi_1\ge R_0$, and $v\equiv \Theta\in(0,1)$ on $(-\infty,\xi_1]$ and convex on $[\xi_2,\infty)$. Let $\Omega \subset[\xi_2+1,\infty)$ be closed, and assume that $v\in \mathscr{C}(\mathbb{R})\cap \mathscr{C}^2(\mathbb{R}\setminus\Omega)$.  We have the following estimates for $\mathcal{D}[v]$:
    \par I. Take $x\in(-\infty,\xi_2+1]$. For any $B_1>1$, 
     \begin{equation*}
\D[v](x)\ge -v(x) \int_{B_1}^{\infty}J(z) \dd z - \left(\j_1+\int_1^{B_1}z^2J(z) \dd z \right) \sup_{y\in[x-B_1,x+B_1]\setminus \Omega} [v''(y)]^-.
 \end{equation*} 
    \par II. Take $x\in[\xi_2+1,+\infty)$. For any $B_2>1$,
     \begin{equation*}
   \D[v](x)\ge y\int_1^{B_2}zJ(z) \dd z -\frac{\j_0 v(x)}{2sB_2^{2s}}+ \frac{\Theta-v(x)}{2s\j_0[x-\xi_1]^{2s}},
     \end{equation*} 
    for any $y\in \partial v(x)$, where $\partial v(x)$ is the subdifferential\footnote{Let $g:I\to\R$ be a convex function on an open interval $I\subset\R$. The {\bf \textit{subdifferential}} of $g$ at $x\in I$ defined by $\partial g(x)=\{h\in \R:g(y)\ge g(x)+ h (y-x),\ \forall y\in I\}$. Moreover, $g$ is differentiable at $x$ if and only if the subdifferential is singleton set, that is, $\partial g(x)=\{g'(x)\}.$   } of $v$ at $x$.
\end{proposition}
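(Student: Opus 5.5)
Both estimates come from the symmetrised form
\begin{equation*}
\D[v](x)=\tfrac12\int_{\R}\big[v(x+z)+v(x-z)-2v(x)\big]J(z)\dd z ,
\end{equation*}
valid because $J$ is even. I would split the integral into a near zone and a far zone, handled differently in the two cases. Throughout I use $0\le v\le 1$ and the monotonicity of $v$, and I treat $\Omega$ as a negligible set where $v$ only has convex corners.

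\emph{Part I.} Take $x\le \xi_2+1$. I split the integral into $|z|\le B_1$ and $|z|>B_1$.

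In the far zone, $v\ge0$ gives
\begin{equation*}
\tfrac12\int_{|z|>B_1}[v(x+z)+v(x-z)-2v(x)]J \ge -v(x)\int_{|z|>B_1}J = -2v(x)\int_{B_1}^\infty J .
\end{equation*}
This is off by a factor $2$ from the claim. To recover the sharp constant, I would instead keep the term $v(x-z)$: since $v$ is decreasing, $v(x-z)\ge v(x)$ for $z>0$. Pairing $v(x+z)-v(x)\ge -v(x)$ with $v(x-z)-v(x)\ge0$ yields exactly $-v(x)\int_{B_1}^\infty J$.

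In the near zone I use the second difference. Writing $\delta_z v(x)=v(x+z)+v(x-z)-2v(x)$, I express it through $v''$ via the Taylor integral formula,
\begin{equation*}
\delta_z v(x)=\int_0^{z}(z-\tau)\big(v''(x+\tau)+v''(x-\tau)\big)\dd\tau .
\end{equation*}
On $\Omega$ the function $v$ is only continuous. However, $\Omega\subset[\xi_2+1,\infty)$ lies in the convex region, so $v''$ exists there as a nonnegative measure whose singular part has a good sign. Hence $\delta_z v(x)\ge -z^2\sup_{[x-B_1,x+B_1]\setminus\Omega}[v'']^-$.

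Integrating $z^2J$ over $|z|\le1$ gives the factor $\j_1$, using $\int_{|z|\le1}z^2J\le2\j_1$ together with the factor $\tfrac12$. Integrating over $1\le|z|\le B_1$ gives $\int_1^{B_1}z^2J$, which closes Part I.

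\emph{Part II.} Take $x\ge\xi_2+1$ and fix a subgradient $y\in\partial v(x)$. This step is the main subtlety. Since $v$ is convex only on $[\xi_2,\infty)$, the supporting-line inequality $v(x+z)\ge v(x)+yz$ holds for $z\ge -(x-\xi_2)$, so in particular for $|z|\le1$. I would instead use the representation
\begin{equation*}
\D[v](x)=\int_{\R}[v(x-z)-v(x)]J(z)\dd z ,
\end{equation*}
split into the pieces $|z|\le1$, $1\le z\le B_2$, $z>B_2$ and $z\le -1$, and bound each from below as follows.

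The piece $|z|\le1$ is handled in the symmetrised form: $\delta_z v(x)\ge0$ by convexity, since $x-1\ge\xi_2$.

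For $1\le z\le B_2$ the symmetrised pair gives, by convexity wherever $x-z\ge\xi_2$, the lower bound $v(x+z)-v(x)\ge yz$. Combined with $v(x-z)\ge v(x)$ from monotonicity, this contributes $y\int_1^{B_2}zJ$; note that $y\le0$. When $x-z<\xi_2$ I only need $v(x+z)\ge v(x)+yz$, which holds because $x+z$ lies in the convex region on the right of $x$, so the supporting line at $x$ applies there.

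For $z>B_2$ the jumps to the right are controlled by $v(x+z)-v(x)\ge -v(x)$, which with $J\le\j_0|z|^{-1-2s}$ gives $-\j_0 v(x)/(2sB_2^{2s})$.

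For $z$ with $x-z\le\xi_1$, that is $z\ge x-\xi_1$, the jumps to the left land where $v=\Theta$. Here I use $v(x-z)-v(x)=\Theta-v(x)$ together with $J\ge\j_0^{-1}z^{-1-2s}$, valid because $x-\xi_1\ge R_0$. This gives $(\Theta-v(x))/(2s\j_0(x-\xi_1)^{2s})$. All remaining left jumps are nonnegative by monotonicity.

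The main obstacle is the bookkeeping in Part II, so that each region of $z$ is used exactly once. The plan is to split $\{z>0\}$ into $(0,1]$, $(1,B_2]$ and $(B_2,\infty)$ for $v(x+z)$, and to handle $v(x-z)$ over $\{z\ge x-\xi_1\}$ separately.
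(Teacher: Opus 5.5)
Your proposal is correct, and most of it coincides with the paper. The far-zone bound in Part I (keeping $v(x-z)-v(x)\ge 0$ to get $-v(x)\int_{B_1}^\infty J$) is the paper's. Part II, once you adopt the bookkeeping of your last paragraph, is exactly the paper's split into $II_1,II_2,II_3$: the symmetrised second difference on $|z|\le1$, right jumps alone on $(1,B_2]$ and $(B_2,\infty)$, and left jumps for $z>1$ bounded below by $(\Theta-v(x))\mathds{1}_{\{z\ge x-\xi_1\}}$, with $x-\xi_1\ge R_0+1>1$ ruling out overlap. Drop the intermediate version in which you pair $v(x-z)\ge v(x)$ with the right jumps on $[1,B_2]$: when $B_2>x-\xi_1$ it uses the left jumps on $[x-\xi_1,B_2]$ twice.

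The real difference is how you prove $\delta_z v(x)\ge -L_{x,B_1}z^2$ for $|z|\le B_1$. The paper adds $\frac{L_{x,B_1}}{2}(\cdot-x)^2$ to $v$ and checks convexity on $[x-B_1,\min\{x+B_1,\xi_2\}]$, where $v\in\mathscr{C}^2$, and on $[\xi_2,x+B_1]$. It glues the two pieces at $\xi_2$, where $v$ is $\mathscr{C}^1$, and then applies the midpoint inequality. This is elementary and uses nothing about $\Omega$ beyond its location in the convex region.

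Your Taylor-remainder route is also valid, but it must be stated as the distributional identity $\delta_z v(x)=\int(z-|y-x|)_+\,\mathrm{d}\mu(y)$ with $\mu=v''$. Here $\mu$ is a Radon measure because $v$ is $\mathscr{C}^2$ on $(-\infty,\xi_2+1)$ and convex on $(\xi_2,\infty)$. It satisfies $\mu\ge -L_{x,B_1}\,\mathrm{d}y$ on $[x-B_1,x+B_1]$, since all of $\mu$ (not only its singular part) is nonnegative on the convex region. The pointwise formula with $v''(x\pm\tau)$ is not literally valid in general. The point $x=\xi_2+1$ may belong to $\Omega$, where $v'(x)$ need not exist, and $\Omega$ need not be Lebesgue-negligible.

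So your argument needs a little measure-theoretic calculus that the paper's convexification trick avoids. In exchange, it makes explicit why only convex singularities can be tolerated in a lower bound.
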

\begin{proof} [{\bf Proof of Proposition \ref{prop-D[phi]}}] 
Let us prove Proposition \ref{prop-D[phi]} in the two spatial regions: $(-\infty,\xi_2+1]$ and $[\xi_2+1,+\infty)$, respectively.
\par\medskip\noindent \# {\bf Proof of I. Take $x\in(-\infty,\xi_2+1]$.} 
For any $B_1>1$, since $v$ is decreasing and $J$ is symmetric, we write 
 \begin{equation*}
\begin{aligned}
\mathcal{D}[v](x)
&\ge \frac{1}{2}\int_{|z|\le B_1}[v(x+z)+v(x-z)-2v(x)]J(z) \dd z  +\int_{z\ge B_1}[v(x+z)-v(x)]J(z) \dd z \\
&:=I_1+I_2.
\end{aligned}
 \end{equation*}
\par 
To estimate $I_1$, we first claim that
\begin{equation}\label{eq:local-symmetric-bound}
v(x-z)+v(x+z)-2v(x)\ge -L_{x,B_1}z^2,\qquad |z|\le B_1,
\end{equation}
where
 \begin{equation*}
L_{x,B_1}:=\sup_{y\in[x-B_1,x+B_1]\setminus\Omega}[v''(y)]^-.
 \end{equation*}
Indeed, since $x-B_1<\xi_2$, $\Omega\subset[\xi_2+1,\infty)$,
and $v$ is convex on $[\xi_2,\infty)$, we have
 \begin{equation*}
L_{x,B_1}=\sup_{y\in[x-B_1,\min\{x+B_1,\xi_2\}]}
[v''(y)]^-<\infty.
 \end{equation*}
Define
 \begin{equation*}
V(y)
:=
v(y)+\frac{L_{x,B_1}}2(y-x)^2,
\qquad y\in[x-B_1,x+B_1].
 \end{equation*}
By the definition of $L_{x,B_1}$,
 \begin{equation*}
V''(y)=v''(y)+L_{x,B_1}\ge0
 \end{equation*}
on $[x-B_1,\min\{x+B_1,\xi_2\}]$. Hence $V$ is convex
there. If $x+B_1>\xi_2$, then the convexity of $v$ on
$[\xi_2,\infty)$ implies that $V$ is also convex on
$[\xi_2,x+B_1]$. Moreover, since
$\Omega\subset[\xi_2+1,\infty)$, the function $V$ is
$C^1$ at $\xi_2$. Therefore, $V$ is convex on the whole interval
$[x-B_1,x+B_1]$. It follows from the midpoint inequality that
 \begin{equation*}
2V(x)\le V(x-z)+V(x+z),
\qquad |z|\le B_1.
 \end{equation*}
Using
 \begin{equation*}
V(x)=v(x),
\qquad
V(x\pm z)
=
v(x\pm z)+\frac{L_{x,B_1}}2z^2,
 \end{equation*}
we obtain \eqref{eq:local-symmetric-bound}.
\par
For $I_1$, using \eqref{eq:local-symmetric-bound}, we get 
 \begin{equation*}
\begin{aligned}   
I_1\ge-\frac{L_{x,B_1}}{2}\int_{-B_1}^{B_1} z^2J(z) \dd z  \ge -\left(\j_1+\int_1^{B_1}z^2J(z) \dd z \right) \sup_{y\in[x-B_1,x+B_1]\setminus \Omega} [v''(y)]^-.
\end{aligned}
 \end{equation*} 
For $I_2$, using $v\ge 0$ on $\R$,
 \begin{equation*}
I_2\ge -v(x)\int_{B_1}^{+\infty}J(z) \dd z .
 \end{equation*}
As a result,
 \begin{equation*}
\D[v](x)\ge -v(x) \int_{B_1}^{+\infty}J(z) \dd z - \left(\j_1+\int_1^{B_1}z^2J(z) \dd z \right)\sup_{y\in[x-B_1,x+B_1]\setminus \Omega} [v''(y)]^-,
 \end{equation*}
for all $x\in (-\infty,\xi_2+1]$.
\medskip
\par
\noindent\# {\bf Proof of II. Take $x\in[\xi_2+1,+\infty)$.}
By the definition of $\mathcal{D}[\cdot]$, we write 
 \begin{equation*}
\begin{aligned}
\mathcal{D}[v](x)&=\int_{1}^{+\infty}[v(x+z)-v(x)]J(z) \dd z 
+P.V.\int_{-1}^{1}[v(x+z)-v(x)]J(z) \dd z \\
&\qquad\qquad\qquad+\int_{-\infty}^{-1}[v(x+z)-v(x)]J(z) \dd z :=II_1+II_2+II_3.
\end{aligned}
 \end{equation*}
\par 
\#\# Let us estimate $II_1$.  Since $v$ is convex on $[\xi_2,+\infty)$, it follows from the definition of the subdifferential at $x$ that
 \begin{equation*}
v(x+z)-v(x)\ge yz,
 \end{equation*}
for all $z\ge 0$ and $y\in\partial v(x)$.
As a result, for any $B_2>1$, we arrive at 
 \begin{equation*}
\begin{aligned}
II_1&=\int_{1}^{B_2}[v(x+z)-v(x)]J(z) \dd z +\int_{B_2}^{+\infty}[v(x+z)-v(x)]J(z) \dd z \\
&\ge y\int_1^{B_2}zJ(z) \dd z -\frac{\j_0}{2sB_2^{2s}} v(x),
\end{aligned}
 \end{equation*}
for all $y\in\partial v(x)$.
\par
\#\# Let us estimate $II_2$. 
By the convexity of $v$ on $[\xi_2,+\infty)$, for all $x\in[\xi_2+1,+\infty)$ and $z\in[-1,1]$, we have 
 \begin{equation*}
\frac{v(x+z)+v(x-z)}{2}\ge v(x).
 \end{equation*}
It then follows from the symmetry of $J$ that 
 \begin{equation*}
II_2= \frac{1}{2}\int_{-1}^{1}[v(x-z)+v(x+z)-2v(x)]J(z) \dd z \ge 0.
 \end{equation*}
\par \#\# Let us estimate $II_3$. For $x\in[\xi_2+1,+\infty)$, since $v$ is decreasing on $\R$ and $v\equiv\Theta$ on $(-\infty,\xi_1]$,   using $x-\xi_1\ge \xi_2-\xi_1+1\ge R_0+1$ and \Cref{kernel_hypothesis}, we have
 \begin{equation*}
II_3\ge \int_{-\infty}^{\xi_1-x}[v(x+z)-v(x)]J(z) \dd z \ge [\Theta-v(x)]\int_{-\infty}^{\xi_1-x}J(z) \dd z \ge \frac{\Theta-v(x)}{2s\j_0[x-\xi_1]^{2s}}.
 \end{equation*}
As a result, for all $x\in [\xi_2+1,+\infty)$, we have
     \begin{equation*}
    \mathcal{D}[v](x)\ge y\int_1^{B_2}zJ(z) \dd z -\frac{\j_0v(x)}{2sB_2^{2s}}+ \frac{\Theta-v(x)}{2s\j_0[x-\xi_1]^{2s}},
     \end{equation*}
    for all $y\in \partial v(x)$.
\end{proof}

\subsection{Estimates from above}
We list and prove below some useful lemmas to estimate the operator $\mathcal{D}$ acting on a piecewise function which is $\mathscr{C}^{1,1}_{\textrm{\textrm{loc}}}$.

\begin{lemma}\label{lemma-c11}
Denote $\delta^2_zv:=v(\cdot+z)+v(\cdot-z)-2v$ for $z\in \R$. 
Let $\eta\in\R$ and $\rho>0$. Suppose that
 \begin{equation*}
v(x)=
\begin{cases}
v_l(x),& x\le \eta\\[1mm]
v_r(x),& x\ge \eta,
\end{cases}
 \end{equation*}
where $v_l\in \mathscr{C}^{1,1}([\eta-\rho,\eta])$ and $v_r\in \mathscr{C}^{1,1}([\eta,\eta+\rho])$ are such that $v_l(\eta)=v_r(\eta)$
and $v_l'(\eta)=v_r'(\eta)$.  Then $v\in \mathscr{C}^{1,1}([\eta-\rho,\eta+\rho])$. Moreover, for every $x\in [\eta-\rho,\eta+\rho]$ and $z\in\mathbb R$ such that $x\pm|z|\in [\eta-\rho,\eta+\rho],$ one has
 \begin{equation*}
\left|\delta^2_zv(x)\right|
\le L_\eta z^2,
 \end{equation*}
where
$L_\eta:=\max\{
\operatorname{Lip}_{[\eta-\rho,\eta]}
v_l',
\operatorname{Lip}_{[\eta,\eta+\rho]}
v_r'\}$.
\end{lemma}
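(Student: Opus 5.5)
The plan is to first show that $v'$ is Lipschitz on $[\eta-\rho,\eta+\rho]$ with constant $L_\eta$, and then derive the second-difference bound from an integral representation of $\delta^2_z v$.

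\textbf{Step 1: regularity.} Since $v_l(\eta)=v_r(\eta)$, $v$ is continuous. Since also $v_l'(\eta)=v_r'(\eta)$, $v$ is differentiable at $\eta$, with $v'$ equal to $v_l'$ on the left and $v_r'$ on the right. To see that $v'$ is $L_\eta$-Lipschitz, take $a\le \eta\le b$ in the interval. Inserting the point $\eta$ and using the triangle inequality gives
\begin{equation*}
|v'(b)-v'(a)|\le |v_r'(b)-v_r'(\eta)|+|v_l'(\eta)-v_l'(a)|\le L_\eta (b-\eta)+L_\eta(\eta-a)=L_\eta(b-a).
\end{equation*}
If $a$ and $b$ lie on the same side of $\eta$, the bound follows directly from the definition of $L_\eta$. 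Hence $v\in\mathscr{C}^{1,1}([\eta-\rho,\eta+\rho])$ with $\operatorname{Lip}v'\le L_\eta$.

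\textbf{Step 2: second differences.} By symmetry of $\delta^2_z$ in $z$, we may take $z\ge 0$. The hypothesis $x\pm z\in[\eta-\rho,\eta+\rho]$ guarantees that $[x-z,x+z]$ lies in the interval, so $v'$ is absolutely continuous there. We can then write
\begin{equation*}
\delta_z^2 v(x)=\int_0^z\big[v'(x+\tau)-v'(x-\tau)\big]\dd\tau .
\end{equation*}
Bounding the integrand by $L_\eta\cdot 2\tau$ and integrating gives $|\delta^2_z v(x)|\le L_\eta z^2$.

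The only point needing care is the gluing at $\eta$, which is handled in Step 1 by splitting at $\eta$. The rest is routine, so I do not expect a genuine obstacle. A Taylor-type formula is not needed, only the fundamental theorem of calculus for the $\mathscr{C}^1$ function $v$.
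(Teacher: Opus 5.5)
Your proof is correct and follows the paper's argument essentially step for step. You insert $\eta$ and use the triangle inequality to get the global Lipschitz bound on $v'$, then write $\delta^2_z v(x)=\int_0^z[v'(x+\tau)-v'(x-\tau)]\dd\tau$ and bound the integrand by $2L_\eta\tau$. One small wording point: the fundamental theorem of calculus only needs $v$ to be $\mathscr{C}^1$ on $[x-z,x+z]$, not $v'$ to be absolutely continuous, but this does not affect the argument.
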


\begin{proof}[{\bf Proof of \Cref{lemma-c11}}]
 We first
show that the derivative of the piecewise function $v$ is Lipschitz
continuous on the whole interval $[\eta-\rho,\eta+\rho]$. On each side of $\eta$, the claim follows directly from the definition of $L_\eta$. It remains to consider points lying on opposite sides of
the junction. 
\par Let $x<\eta<y.$ Since
$v_l'(\eta)=v_r'(\eta),$ we have
 \begin{equation*}
\begin{aligned}
|v'(y)-v'(x)|
&=
|v_r'(y)-v_l'(x)|  \\
&\le
|v_r'(y)-v_r'(\eta)|
+
|v_l'(\eta)-v_l'(x)|  \\
&\le
\operatorname{Lip}_{[\eta-\rho,\eta]}
v_r'(y-\eta)+
\operatorname{Lip}_{[\eta,\eta+\rho]}
v_l'(\eta-x)  \\
&\le
L_\eta (y-x).
\end{aligned}
 \end{equation*}
Hence for all $x,y\in[\eta-\rho,\eta+\rho]$,
 \begin{equation*}
|v'(y)-v'(x)|\le L_\eta|y-x|.
 \end{equation*}
Thus,
 \begin{equation*}
v\in \mathscr{C}^{1,1}([\eta-\rho,\eta+\rho])
\quad\text{and}\quad \operatorname{Lip}_{[\eta-\rho,\eta+\rho]}v'\le L_\eta.
 \end{equation*}

Now let $x\in[\eta-\rho,\eta+\rho]$ and $z\in\mathbb R$ be such that $x-|z|$ and $x+|z|$ both belong to $[\eta-\rho,\eta+\rho].$
Since $\delta_z^2v=\delta_{|z|}^2v$, it suffices to consider $z>0$.
Using the fundamental theorem of calculus, we write
 \begin{equation*}
v(x+z)-v(x)
=
\int_0^z v'(x+r)\,dr
\quad\text{and}\quad
v(x)-v(x-z)
=
\int_0^z v'(x-r)\,dr.
 \end{equation*}
Therefore, by the Lipschitz continuity of $v'$,
 \begin{equation*}
|\delta_z^2v(x)|
\le
\int_0^z
|v'(x+r)-v'(x-r)|\,dr \le
\int_0^z 2L_\eta r\,dr =
L_\eta z^2.
 \end{equation*}
This completes the proof.
\end{proof}

\begin{remark}
If the two pieces are piecewise $\mathscr{C}^2$, the same statement follows with
 \begin{equation*}
L_\eta=
\max\left\{
\|v_l''\|_{L^\infty(\eta-\rho,\eta)},
\|v_r''\|_{L^\infty(\eta,\eta+\rho)}
\right\}.
 \end{equation*}
\end{remark}

\begin{lemma}\label{D-bounded}

Assume that $J$ satisfies Hypothesis \ref{kernel_hypothesis} with some $s>0$. Let $v\in \mathscr{L}^\infty(\R)$ and suppose that for $\Omega\subset \R$, 
\begin{equation*} [v]_{2,\infty;\Omega} := \sup_{x\in\Omega,\ 0<|z|\le1} \frac{|\delta^2_zv(x)| }{z^2} <\infty. \end{equation*}
Then $\mathcal{D}[v]\in \mathscr{L}^\infty(\Omega)$, and 
 \begin{equation*}
\|\mathcal{D}[v]\|_{\mathscr{L}^\infty(\Omega)}\le \j_1[v]_{2,\infty;\Omega}+\frac{2\j_0}{s}\|v\|_{\mathscr{L}^\infty(\R)}.
 \end{equation*}
\end{lemma}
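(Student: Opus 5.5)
Fix $x\in\Omega$ and split $\D[v](x)$ at $|z|=1$. Using the evenness of $J$, rewrite the principal value part as a symmetric second-difference integral:
\begin{equation*}
\D[v](x)=\frac12\int_{|z|\le 1}\delta^2_z v(x)\,J(z)\dd z+\int_{|z|>1}[v(x-z)-v(x)]J(z)\dd z .
\end{equation*}
This symmetrisation is legitimate because the truncated integrals over $\varepsilon<|z|\le 1$ coincide after the change of variables $z\mapsto -z$. The symmetrised integrand is then absolutely integrable, so the principal value is an honest Lebesgue integral.

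For the near part, the assumption $[v]_{2,\infty;\Omega}<\infty$ gives $|\delta_z^2v(x)|\le [v]_{2,\infty;\Omega}z^2$ for $0<|z|\le1$. Combined with \Cref{kernel_hypothesis}(ii), which says $\int_{|z|\le1}z^2J\le 2\j_1$, this bounds the near part in absolute value by $\frac12[v]_{2,\infty;\Omega}\cdot 2\j_1=\j_1[v]_{2,\infty;\Omega}$.

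For the far part, use $|v(x-z)-v(x)|\le 2\|v\|_{\mathscr L^\infty(\R)}$ together with the upper tail bound $J\le \j_0|z|^{-1-2s}$ on $[1,\infty)$ from \Cref{kernel_hypothesis}(i), extended to $|z|\ge1$ by evenness. This gives
\begin{equation*}
\int_{|z|>1}J(z)\dd z\le 2\j_0\int_1^\infty z^{-1-2s}\dd z=\frac{\j_0}{s},
\end{equation*}
so the far part is at most $\frac{2\j_0}{s}\|v\|_{\mathscr L^\infty(\R)}$. Summing the two bounds yields the claimed estimate uniformly in $x\in\Omega$, and hence $\D[v]\in\mathscr L^\infty(\Omega)$.

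The only point needing care is the justification of the principal value, that is, that the symmetric rewriting is valid and the limit exists. I expect this to be routine given the integrability of $z^2J$ near the origin and the pointwise second-difference control. No step seems to be a genuine obstacle.
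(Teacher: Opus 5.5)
Your proposal is correct and is essentially the paper's proof: split at $|z|=1$, symmetrize the near part using the evenness of $J$ to get $\frac12\int_{|z|\le1}|\delta_z^2v(x)|J(z)\,dz\le \j_1[v]_{2,\infty;\Omega}$, and bound the far part by $2\|v\|_{\mathscr{L}^\infty(\R)}\int_{|z|\ge1}J(z)\,dz\le \frac{2\j_0}{s}\|v\|_{\mathscr{L}^\infty(\R)}$. Your remark justifying the principal value, by symmetrizing the truncated integrals over $\{\varepsilon<|z|\le1\}$, is a small addition that the paper leaves implicit.
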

\begin{proof}[{\bf Proof of \Cref{D-bounded}}]
    By the definition of $\mathcal{D}[\cdot]$, we write 
     \begin{equation*}
    \mathcal{D}[v](x)=\left(P.V.\int_{|y|\le 1}+\int_{|y|\ge 1}\right)[v(x-y)-v(x)]J(y)dy:=I_1+I_2.
     \end{equation*}
    Using the symmetry of $J$, one has 
     \begin{equation*}
    |I_1|\le \frac{1}{2}\int_{|y|\le 1}|\delta^2_yv(x)|J(y)dy=\frac{1}{2}\int_{|y|\le 1}\frac{|\delta^2_yv(x)|}{y^2}y^2J(y)dy\le \j_1[v]_{2,\infty;\Omega}.
     \end{equation*}
    It follows from $J(y)\le \j_0|y|^{-1-2s}$ for $|y|\ge 1$ that 
     \begin{equation*}
    |I_2|\le \frac{2\j_0}{s}\|v\|_{\mathscr{L}^\infty(\R)}.
     \end{equation*} 
    Therefore,
     \begin{equation*}
\|\mathcal{D}[v]\|_{\mathscr{L}^\infty(\Omega)}\le \j_1[v]_{2,\infty;\Omega}+\frac{2\j_0}{s}\|v\|_{\mathscr{L}^\infty(\R)}.
 \end{equation*}
    \end{proof}

\begin{proposition}\label{gen-upper-bound-of-D}
 Assume $s\in\left(0,\frac{1}{2}\right)$. For some $A\in\R$, let $v\in \mathscr{C}^{1,1}_{\textrm{\textrm{loc}}}([A,\infty))\cap \mathscr{L}^\infty(\R)$ be non-increasing and non-negative on $\R$, and convex on $[A,+\infty)$.
Then for $x\ge A+1$, we have
     \begin{equation*}
    \D[v](x)\le \j_1 \mathcal{Q}[v](x)+
          \frac{\mathcal{C}_{0}}{(x-A)^{2s}},
     \end{equation*}
    where 
    $\mathcal{Q}[v](x):=\sup_{0<|z|\le1}\frac{|\delta^2_zv(x)|}{z^2}$
    and $\mathcal{C}_{0}:= (1-2s)^{-1}(2s)^{-1}\j_0\|v\|_{\mathscr{L}^\infty(\R)}.$
   
\end{proposition}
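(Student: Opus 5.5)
The plan is to split $\mathcal{D}[v](x)$ into a near-field part ($|z|\le 1$) and a far-field part ($|z|\ge 1$). The near part is bounded by the second-difference quotient $\mathcal{Q}[v](x)$. The far part is bounded using only monotonicity, boundedness and convexity of $v$ on $[A,x]$. Fix $x\ge A+1$ and write
\begin{equation*}
\mathcal{D}[v](x)=P.V.\int_{|z|\le1}[v(x-z)-v(x)]J(z)\dd z+\int_{|z|\ge1}[v(x-z)-v(x)]J(z)\dd z=:I_1+I_2 .
\end{equation*}

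\emph{Near field.} Since $x-1\ge A$ and $v\in\mathscr{C}^{1,1}_{\textrm{loc}}([A,\infty))$, the quantity $\mathcal{Q}[v](x)$ is finite. By the symmetry of $J$ and \Cref{kernel_hypothesis}(ii), exactly as in the proof of \Cref{D-bounded},
\begin{equation*}
I_1=\frac12\int_{|z|\le1}\delta_z^2v(x)J(z)\dd z\le\frac12\,\mathcal{Q}[v](x)\int_{|z|\le1}z^2J(z)\dd z\le \j_1\mathcal{Q}[v](x).
\end{equation*}

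\emph{Far field.} Since $v$ is non-increasing, $v(x+z)-v(x)\le 0$ for $z\ge 1$, so these jumps can be dropped. After using the evenness of $J$, this gives
\begin{equation*}
I_2\le\int_1^{+\infty}[v(x-z)-v(x)]J(z)\dd z .
\end{equation*}
I then split the integral at $z=x-A\ (\ge 1)$.

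For $z\ge x-A$, I use $0\le v\le\|v\|_{\mathscr{L}^\infty}$ and the upper tail bound $J\le \j_0|z|^{-1-2s}$. This bounds that piece by $\j_0\|v\|_{\mathscr{L}^\infty}(2s)^{-1}(x-A)^{-2s}$.

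For $1\le z\le x-A$, the point $x-z$ lies in $[A,x]$, where $v$ is convex. The chord inequality then gives
\begin{equation*}
v(x-z)\le v(x)+\frac{z}{x-A}\,[v(A)-v(x)],
\end{equation*}
so $v(x-z)-v(x)\le \|v\|_{\mathscr{L}^\infty}\,z/(x-A)$. Integrating against $\j_0 z^{-1-2s}$ uses $s<\frac12$ essentially:
\begin{equation*}
\int_1^{x-A}z^{-2s}\dd z\le\frac{(x-A)^{1-2s}}{1-2s}.
\end{equation*}
This bounds the middle piece by $\j_0\|v\|_{\mathscr{L}^\infty}(1-2s)^{-1}(x-A)^{-2s}$.

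\emph{Conclusion.} Adding the two far-field pieces gives
\begin{equation*}
\j_0\|v\|_{\mathscr{L}^\infty}\Big(\tfrac{1}{2s}+\tfrac{1}{1-2s}\Big)(x-A)^{-2s}=\frac{\j_0\|v\|_{\mathscr{L}^\infty}}{2s(1-2s)}(x-A)^{-2s}=\frac{\mathcal{C}_0}{(x-A)^{2s}},
\end{equation*}
which combined with the near-field bound is the claimed estimate.

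The only delicate step is the middle range $1\le z\le x-A$. A crude bound $v(x-z)-v(x)\le\|v\|_{\mathscr{L}^\infty}$ there would only yield an $O(1)$ term instead of the required decay $(x-A)^{-2s}$. The chord inequality, which is where convexity on $[A,\infty)$ enters, is exactly what supplies the extra factor $z/(x-A)$.
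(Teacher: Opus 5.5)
Your proposal is correct and matches the paper's proof step by step. Both use the same three-way split: the near field is symmetrized and bounded by $\j_1\mathcal{Q}[v](x)$; the range $1\le z\le x-A$ is handled by the convexity chord inequality $v(x-z)-v(x)\le \|v\|_{\mathscr{L}^\infty(\mathbb{R})}\,z/(x-A)$; and the tail beyond $x-A$ is bounded by $\|v\|_{\mathscr{L}^\infty(\mathbb{R})}$. The resulting constants $\frac{1}{1-2s}+\frac{1}{2s}$ combine to $\mathcal{C}_0$ exactly as in the paper.
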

\begin{proof}[{\bf Proof of \Cref{gen-upper-bound-of-D}}]
By the definition of $\D[\cdot]$, since $v$ is non-increasing, we write 
 \begin{equation*}
\begin{aligned}
\D[v](x)&\le P.V.\int_{-1}^1[v(x-y)-v(x)]J(y)dy+\int_{1}^{x-A}[v(x-y)-v(x)]J(y)dy\\
&\qquad\qquad+\|v\|_{\mathscr{L}^\infty(\R)}\int_{x-A}^{+\infty}J(y)dy:=I_1+I_2+I_3.
\end{aligned}
 \end{equation*}
 \par {\bf \# Let us estimate $I_1$.} It follows from $J(\cdot)=J(-\cdot)$ that
 \begin{equation*}
I_1=\frac{1}{2}\int_{-1}^{1}[v(x-y)+v(x+y)-2v(x)]J(y)dy\le \j_1 \mathcal{Q}[v](x).
 \end{equation*}
   \par {\bf \# Let us estimate $I_2$.}  For $1\le y\le x-A$, the convexity of $v$ on $[A,\infty)$ gives 
    \begin{equation*}
   v(x-y)-v(x)\le \frac{v(A)-v(x)}{x-A}y\le \frac{\|v\|_{\mathscr{L}^\infty(\R)}}{x-A}y.
    \end{equation*}

   And thus, using $J(y)\le \j_0y^{-1-2s}$ for $y\ge 1$,
 \begin{equation*}
I_2\le \frac{\|v\|_{\mathscr{L}^\infty(\R)}\j_0}{x-A}\int_{1}^{x-A}y^{-2s}dy\le
    \j_0(1-2s)^{-1}\|v\|_{\mathscr{L}^\infty(\R)}(x-A)^{-2s}.
 \end{equation*}
   \par {\bf \# Let us estimate $I_3$.} Using $J(y)\le \j_0y^{-1-2s}$ for $y\ge 1$ again,
    \begin{equation*}
   I_3\le \frac{\j_0\|v\|_{\mathscr{L}^\infty(\R)}}{2s(x-A)^{2s}}.
    \end{equation*}
    Therefore, combining the estimations of $I_1$, $I_2$ and $I_3$, it complete the proof.
\end{proof}

\section{The case \texorpdfstring{$s < \frac{1}{2}$}{s<1/2}}

\subsection{Definition and properties of a suitable subsolution}
To derive a lower bound, we look for a function $\underline u:\mathbb{R}^+\times \mathbb{R}\to [0,\theta_0]$ satisfying 
 \begin{equation*}
\partial_t\underline u-\D[\underline u]-f(\underline u)\le 0\quad\text{and}\quad \underline u\le \theta_0,
 \end{equation*}
where $\theta_0\in (\theta,1]$ is given by \Cref{hy-initial-data}.
Since $f(z)$ equals zero for $z\in[0,\theta]$, one may construct a function $\underline u$ with values close to zero such that
\begin{equation*}
\partial_t\underline u-\D[\underline u]\le 0.
\end{equation*}

In view of part II of Proposition \ref{prop-D[phi]}, for any $\Theta\in(\theta,\theta_0)$, it is reasonable to seek $\underline u$ satisfying  
\begin{equation}
\label{right-b}
\partial_t \underline u\lesssim [x-X_\Theta(t;\underline u)]^{-2s}\quad\text{for all } x\ge X_{\lambda}(t;\underline u),
\end{equation}
where $\lambda\in(0,\theta)$ is small. Based on such an observation, we can further construct an explicit global subsolution.

\par 
To smoothly connect the constant plateau with the convex leading edge, we first introduce a suitable monotone mollifier, which will be used in both cases $s<\frac{1}{2}$ and $s=\frac{1}{2}$.
\begin{lemma}\label{lem:smooth-truncation}
Let $0<\theta_1<\theta_2\le 1$ and $\theta_1<\Theta< 1.$
Then there exists $m\in \mathscr{C}^2([0,1];[0,\Theta])$ such that
 \begin{equation*}
m(y)=
\begin{cases}
y, & y\in[0,\theta_1],\\[2mm]
\Theta, & y\in[\theta_2,1],
\end{cases}
\quad \text{and}\quad
0\le m'\le M,
\qquad
m''\ge-C
\quad\text{on }[0,1],
 \end{equation*}
for some constants $M,C>0$ depending only on
$\theta_1,\theta_2$, and $\Theta$.
\end{lemma}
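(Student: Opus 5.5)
We will build $m$ explicitly by gluing the identity to the constant $\Theta$ through a $\mathscr{C}^2$ monotone transition on $[\theta_1,\theta_2]$. We need $m\in\mathscr{C}^2([0,1])$ with $m=\mathrm{id}$ on $[0,\theta_1]$ and $m\equiv\Theta$ on $[\theta_2,1]$. This forces the matching conditions
$m(\theta_1)=\theta_1$, $m'(\theta_1)=1$, $m''(\theta_1)=0$ on the left, and $m(\theta_2)=\Theta$, $m'(\theta_2)=m''(\theta_2)=0$ on the right. Since $\Theta>\theta_1$, the total increase $\Theta-\theta_1$ on $[\theta_1,\theta_2]$ is positive, so a nondecreasing interpolant is possible.

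The cleanest route is to prescribe the derivative. Fix a nonnegative function $g\in\mathscr{C}^1([\theta_1,\theta_2])$ with
\begin{equation*}
g(\theta_1)=1,\quad g'(\theta_1)=0,\quad g(\theta_2)=0,\quad g'(\theta_2)=0,\quad \int_{\theta_1}^{\theta_2}g(y)\dd y=\Theta-\theta_1 .
\end{equation*}
We then set $m(y)=\theta_1+\int_{\theta_1}^y g$ on $[\theta_1,\theta_2]$, together with the prescribed formulas outside this interval. With this choice $m$ is $\mathscr{C}^2$ across both junctions: the values, first derivatives and second derivatives all match. Moreover $m'=g\ge0$, $m$ ranges in $[0,\Theta]$ because it is nondecreasing from $0$ to $\Theta$, and $M=\max g$ and $C=\max(g')^-$ are finite. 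So the lemma reduces entirely to constructing $g$.

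To construct $g$, let $L=\theta_2-\theta_1$ and pick a smooth step $\chi:[0,1]\to[0,1]$ with $\chi=1$ near $0$, $\chi=0$ near $1$, and $\chi$ nonincreasing. Examples are $\chi(t)=1-h(t)$ with $h$ the standard $\mathscr{C}^\infty$ transition function, or simply the polynomial $1-3t^2+2t^3$, which has vanishing derivative at both ends. Next add a nonnegative bump $b$ compactly supported in the interior of $[\theta_1,\theta_2]$, with mass $\beta$, and set
\begin{equation*}
g(y)=\chi\bigl((y-\theta_1)/L\bigr)+\kappa\, b(y),\qquad \kappa\ge 0 .
\end{equation*}
The boundary conditions are unaffected by the bump. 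The integral equals $L\int_0^1\chi+\kappa\beta$, and this can be made equal to $\Theta-\theta_1$ precisely when $\Theta-\theta_1\ge L\int_0^1\chi$.

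\emph{The only genuine obstacle is the mass constraint when $\Theta-\theta_1$ is small compared to $L$.} This is the case $\Theta<\theta_2$, which the statement allows since it only requires $\theta_1<\Theta$. To handle it, we let $\chi$ drop from $1$ to $0$ on a short initial subinterval $[0,\varepsilon]$ and stay $0$ afterwards, i.e. $\chi(t)=\chi_0(t/\varepsilon)$. Then $\int_0^1\chi=\varepsilon\int_0^1\chi_0$ can be made as small as we like, in particular $\le(\Theta-\theta_1)/L$, and the bump supplies the remaining mass. Here $\varepsilon$ depends only on $\theta_1,\theta_2,\Theta$.

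All quantities involved are explicit functions of $\theta_1,\theta_2,\Theta$ only, so $M$ and $C$ depend only on these parameters. In particular $C\lesssim 1/(\varepsilon L)+\kappa\|b'\|_\infty$. This completes the plan.
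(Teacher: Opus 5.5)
Your proposal is correct and is essentially the paper's argument. The paper also sets $m'(y)=G_q\bigl((y-\theta_1)/(\theta_2-\theta_1)\bigr)$ for a nonnegative $\mathscr{C}^1$ profile with $G_q(0)=1$, $G_q(1)=G_q'(0)=G_q'(1)=0$ and mass $q=(\Theta-\theta_1)/(\theta_2-\theta_1)$. For $q>\tfrac12$ it uses exactly your step-plus-bump, $1-3z^2+2z^3+(q-\tfrac12)\,30z^2(1-z)^2$; for $q\le\tfrac12$ it uses the explicit family $(1-z)^p(1+pz)$ with $p=2/q-2$, where you compress the step and add a bump.

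One imprecision: the mass obstruction is $q<\int_0^1\chi$, not literally $\Theta<\theta_2$. This does no harm, because choosing $\varepsilon$ small handles every case.
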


\begin{proof}[{\bf Proof of \Cref{lem:smooth-truncation}}]
Set
 \begin{equation*}
q:=\frac{\Theta-\theta_1}{\theta_2-\theta_1}>0.
 \end{equation*}
We first construct a function $G_q\in \mathscr{C}^1([0,1])$ satisfying
\begin{equation}\label{eq:Gq-properties}
G_q\ge0,\qquad
G_q(0)=1,\qquad
G_q(1)=0,\qquad
G_q'(0)=G_q'(1)=0,
\qquad
\int_0^1G_q(z)\, \dd z =q.
\end{equation}

If $0<q\le\frac12$, let $p:=\frac2q-2\ge2$ and define
$G_q(z):=(1-z)^p(1+pz)$.
Then
 \begin{equation*}
G_q'(z)=-p(p+1)z(1-z)^{p-1}\le0\quad\text{and}\quad
\int_0^1G_q(z)\, \dd z 
=
\frac2{p+2}
=q.
 \end{equation*}
Hence \eqref{eq:Gq-properties} holds.

If $q>\frac12$, define
 \begin{equation*}
H(z):=1-3z^2+2z^3,
\qquad
\psi(z):=30z^2(1-z)^2,
 \end{equation*}
and set
 \begin{equation*}
G_q(z):=
H(z)+\left(q-\frac12\right)\psi(z).
 \end{equation*}
Since
 \begin{equation*}
H,\psi\ge0,\qquad
\int_0^1H(z)\, \dd z =\frac12,
\qquad
\int_0^1\psi(z)\, \dd z =1,
 \end{equation*}
and
 \begin{equation*}
H(0)=1,\quad H(1)=0,\quad
H'(0)=H'(1)=0,
 \end{equation*}
while
 \begin{equation*}
\psi(0)=\psi(1)=\psi'(0)=\psi'(1)=0,
 \end{equation*}
the properties in \eqref{eq:Gq-properties} again follow.

We now define
 \begin{equation*}
m(y):=
\begin{cases}
y,
&0\le y\le \theta_1,\\[3mm]
\theta_1+(\theta_2-\theta_1)
\displaystyle
\int_0^{\frac{y-\theta_1}{\theta_2-\theta_1}}G_q(z)\, \dd z ,
&\theta_1<y<\theta_2,\\[5mm]
\Theta,
&\theta_2\le y\le1.
\end{cases}
 \end{equation*}
Since
 \begin{equation*}
(\theta_2-\theta_1)\int_0^1G_q(z)\, \dd z 
=
(\theta_2-\theta_1)q
=
\Theta-\theta_1,
 \end{equation*}
the three pieces agree continuously at $\theta_1$ and $\theta_2$.

For $y\in(\theta_1,\theta_2)$,
 \begin{equation*}
m'(y)
=
G_q\left(\frac{y-\theta_1}{\theta_2-\theta_1}\right)
\quad\text{and}\quad m''(y)
= \frac1{\theta_2-\theta_1}
G_q'\left(\frac{y-\theta_1}{\theta_2-\theta_1}\right).
 \end{equation*}
By \eqref{eq:Gq-properties}, these derivatives match the derivatives of the
affine and constant pieces at $\theta_1$ and $\theta_2$. Hence $m\in\mathscr{C}^2([0,1]).$
\par
Moreover, $G_q\ge0$, so $m$ is nondecreasing. Since $m(0)=0$ and $m(1)=\Theta$,
we have
 \begin{equation*}
0\le m\le\Theta
\quad\text{on }[0,1].
 \end{equation*}
Finally, since $G_q,G_q'\in C([0,1])$, the constants
 \begin{equation*}
M:=\max_{z\in[0,1]}G_q(z)\quad \text{and}\quad C:=
\frac1{\theta_2-\theta_1}
\max_{z\in[0,1]}[G_q'(z)]^-
 \end{equation*}
are finite. Therefore,
 \begin{equation*}
0\le m'\le M,
\qquad
m''\ge-C
\quad\text{on }[0,1].
 \end{equation*}
This completes the proof.
\end{proof}

For any $\Theta\in(\theta,\theta_0)$, using \Cref{lem:smooth-truncation}, one may find $m\in \mathscr{C}^2([0,1];[0,\Theta])$ satisfying 
\par
 \begin{equation*}
m(y)=\begin{cases}
    y, &y\in \left[0,\frac{\Theta+\theta}{2}\right],\\[5pt]
    \Theta, &y\in\left[\frac{1+\theta_0}{2},1\right],
\end{cases}
 \end{equation*}
and 
\begin{equation}
    \label{smf}
0\le m'\le M_\theta \quad\text{and}\quad m''\ge -C_\theta\quad \text{on }[0,1],
\end{equation}
for some $M_\theta$ and $C_\theta>0$ depending only on $\theta$, $\Theta$ and $\theta_0$. Denote 
 \begin{equation*}
\kappa:=\frac{2s}{\Theta+\theta}\varepsilon^{-1}e^{-\frac{1}{\varepsilon}}\quad\text{and}\quad\theta_\varepsilon :=e^{-\frac{1}{\varepsilon}},
 \end{equation*}
and choose $\varepsilon\in\left(0,\varepsilon_0\right)$ with $\varepsilon_0:=\min\left\{1,\left(\ln\frac{1}{\theta}\right)^{-1},\frac{1}{2}\left(\ln\left(1+\frac{2s}{\Theta+\theta}\right)\right)^{-1}\right\}$ such that $$\kappa<1\quad\text{and}\quad\theta_\varepsilon<\theta.$$
For $\lambda\in (0,1]$, denote
 \begin{equation*}
X_\lambda(t)=\left\{
\begin{aligned}
&\left(1+\kappa\left(\frac{\theta_\varepsilon }{\lambda }\right)^\frac{1}{2s}\right)t^\frac{1}{2s}, &\lambda\in(0,\theta_\varepsilon ],\\
    &\left[\kappa+\left(\frac{\Theta+\theta }{ \Theta+\theta + (\lambda-\theta_\varepsilon)\varepsilon}\right)^{\frac{1}{\varepsilon^2}}\right]t^\frac{1}{2s},&\lambda\in(\theta_\varepsilon ,1].
\end{aligned}\right.
 \end{equation*}
For $t\gg1$, we define
 \begin{equation*}
\underline u(t,x):=\begin{cases}
\Theta, & x\le X_{\frac{1+\theta_0}{2}}(t),\\
   m\circ  \phi_m(t,x), &X_{\frac{1+\theta_0}{2}}(t)\le x\le X_{\theta_\varepsilon }(t),\\
  \phi_r(t,x), & x\ge X_{\theta_\varepsilon }(t),
\end{cases} 
 \end{equation*}
where
 \begin{equation*}
\phi_m(t,x)=\frac{\Theta+\theta}{\varepsilon} \left[\left(\frac{t^\frac{1}{2s}}{x-\kappa t^\frac{1}{2s}}\right)^{\varepsilon^2}-1\right]+\theta_\varepsilon \qquad \text{and} \qquad \phi_r(t,x)=\frac{\kappa^{2s}\theta_\varepsilon t}{(x-t^\frac{1}{2s})^{2s}}.
 \end{equation*}
 Observe that $X_{\Theta}(t;\underline{u})=X_{\frac{1+\theta_0}{2}}(t)$ and $\lim_{t\to\infty}X_{\lambda_1}(t)-X_{\lambda_2}(t)=\infty$ for any $0<\lambda_1<\lambda_2\le\frac{1+\theta_0}{2}$. 
 Since $\Theta$ can be chosen arbitrarily in $(\theta,\theta_0)$, the above construction, together with the propagation result for the remaining levels, yields the desired estimate for every $\lambda\in(0,1)$.

A summary of the construction is displayed in \Cref{fig:diagram-subsolution}.
\begin{figure}[H]
     \centering
\begin{tikzpicture}
\pgfmathsetmacro{\t}{e+0.5}
\pgfmathsetmacro{\et}{0.05/ ln(\t )}
\pgfmathsetmacro{\Xt}{\t * ln(\t)}
\pgfmathsetmacro{\pgamma}{0.8/(1-e^(-0.05/4))}  
\pgfmathsetmacro{\tlambda}{\pgamma*e^(-0.05/4)}  
\pgfmathsetmacro{\Rt}{\Xt /(1-( \Xt )^(-1/2))-\Xt-( \t )^(1/4)} 
\pgfmathsetmacro{\Xthetathree}{(\Xt )/(1-(\Xt )^(-1/2))}
\pgfmathsetmacro{\Xthetatwo}{\Xt +\t ^ ( ln(\pgamma /(0.8+\tlambda -0.3 ))/0.05) + \Rt}

\pgfmathsetmacro{\Xthetaone}{\Xt +\t ^ ( ln(\pgamma /(0.4+\tlambda -0.3))/0.05) + \Rt}
    \begin{axis}[
    axis lines = left,
    ymin = 0,
    ymax = 0.9,
    xtick = {\Xthetatwo,\Xthetathree},
    xticklabels = {$X_{\frac{1+\theta_0}{2}}(t)$,$X_{\theta_\varepsilon }(t)$},
     ytick = {0.3-0.01,0.4,0.6,0.8},
    yticklabels = {$\theta_\varepsilon $,$\frac{\Theta+\theta}{2}$,$\Theta$,$\theta_0$},
    width = 8.5cm,   
    height = 5.5cm,
    ]

      \addplot[ 
    color = black,
    domain = 0.98*\Xthetatwo : \Xthetatwo,
    samples = 100,
    line width =1.5,
    ] {0.6};
    
       \addplot[ 
    color = red,
   domain = \Xthetatwo : \Xthetaone,
    samples = 100,
    line width =1.5,
    ] {(0.6-0.4)*sin(deg(pi/2*((\pgamma / (x-\Rt -\Xt )^(\et)-\tlambda + 0.3) -0.4)/(0.8-0.4)))+0.4};
       \addplot[ 
    color = red,
   domain = \Xthetaone : \Xthetathree,
    samples = 100,
    line width =1.5,
    ] {(\pgamma / (x-\Rt -\Xt )^(\et)-\tlambda + 0.3)};
    
    \addplot[ 
    color = black,
   domain = \Xthetathree : 1.06 * \Xthetathree,
    samples = 100,
    line width =1.5,
    ] {0.29 /(x^1.3+1-\Xthetathree^1.3)};

\draw[dashed] (axis cs:\Xthetatwo,0) -- (axis cs:\Xthetatwo,0.6);
\draw[dashed] (axis cs:\Xthetathree,0) -- (axis cs:\Xthetathree,0.3-0.01);
\draw[dashed] (axis cs:0,0.3-0.01) -- (axis cs:\Xthetathree,0.3-0.01);
\pgfmathsetmacro{\Xthetamid}{(\Xthetatwo + \Xthetathree)/2}
\node[rotate=-34] at (axis cs: \Xthetamid, 0.5) [above] {\color{red}{$m\circ  \phi_m$}};

\pgfmathsetmacro{\Xthree}{1.03*\Xthetathree}
\node[rotate=-2] at (axis cs: \Xthree, 0.2) [above] {$\phi_r$};

\end{axis}
\end{tikzpicture}
    \caption{Graph of the function $\underline u(t,\cdot)$ for some time $t$.}
    \label{fig:diagram-subsolution}
 \end{figure}

\begin{lemma}\label{lemma-s<1/2}
The function $t \mapsto\underline{u}(t,\cdot)$ is nondecreasing in $t\in \R^+$, $x \mapsto\underline{u}(x,\cdot)$ is nonincreasing in $x\in \R$ and convex in $x\in \left[X_{\frac{\Theta+\theta}{2}}(t),\infty\right)$.
Moreover,  for all $t>0$, $\underline{u}(t,\cdot)\in  \mathscr{C}^1(\R)\cap\mathscr{C}^2(\R\setminus\{X_{\theta_\varepsilon}(t)\})$.
\end{lemma}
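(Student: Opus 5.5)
The approach is to exploit the self-similar structure of $\underline u$. Every piece depends on $(t,x)$ only through $\xi:=x\,t^{-\frac1{2s}}$, since
\[
\phi_m=\tfrac{\Theta+\theta}{\varepsilon}\big[(\xi-\kappa)^{-\varepsilon^2}-1\big]+\theta_\varepsilon
\qquad\text{and}\qquad
\phi_r=\kappa^{2s}\theta_\varepsilon(\xi-1)^{-2s}.
\]
Likewise every junction point $X_\lambda(t)$ is a constant multiple of $t^{\frac1{2s}}$. So I will write $\underline u(t,x)=U(\xi)$ and reduce the statement to one-variable facts about a profile $U$.

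\textbf{Step 1: the middle piece is well defined and its endpoints match the level sets.} Solving $\phi_m=\lambda$ gives
\[
\xi=\kappa+\Big(1+\tfrac{(\lambda-\theta_\varepsilon)\varepsilon}{\Theta+\theta}\Big)^{-1/\varepsilon^2},
\]
which is exactly the second line of the definition of $X_\lambda$. Since $\phi_m$ is strictly decreasing in $\xi$ on $\xi>\kappa$, the middle interval corresponds to $\phi_m\in[\theta_\varepsilon,\frac{1+\theta_0}{2}]\subset[0,1]$. Hence $m\circ\phi_m$ makes sense there.

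\textbf{Step 2: continuity and regularity at the two junctions.}
\begin{itemize}
\item At $X_{\theta_\varepsilon}(t)=(1+\kappa)t^{\frac1{2s}}$, both pieces equal $\theta_\varepsilon$. Here $m$ is the identity, because $\theta_\varepsilon<\frac{\Theta+\theta}{2}$.
\item Differentiating, $\partial_x\phi_m=-(\Theta+\theta)\varepsilon\, t^{-\frac1{2s}}$ at that point. Also
\[
\partial_x\phi_r=-2s\kappa^{2s}\theta_\varepsilon t\,(\kappa t^{\frac1{2s}})^{-2s-1}=-\frac{2s\theta_\varepsilon}{\kappa}\,t^{-\frac1{2s}},
\]
and this equals $-(\Theta+\theta)\varepsilon\,t^{-\frac1{2s}}$ by the choice of $\kappa$. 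So $\underline u$ is $\mathscr C^1$ there, while the second derivatives generally differ.
\item At $X_{\frac{1+\theta_0}{2}}(t)$, we have $\phi_m=\frac{1+\theta_0}{2}$. By \Cref{lem:smooth-truncation}, $m\equiv\Theta$ on $[\frac{1+\theta_0}{2},1]$, so $m'=m''=0$ there. Hence $m\circ\phi_m$ glues to the constant $\Theta$ in a $\mathscr C^2$ way.
\item Elsewhere all pieces are smooth compositions; on the right piece $x-t^{\frac1{2s}}\ge\kappa t^{\frac1{2s}}>0$. This gives $\underline u(t,\cdot)\in\mathscr C^1(\R)\cap\mathscr C^2(\R\setminus\{X_{\theta_\varepsilon}(t)\})$.
\end{itemize}

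\textbf{Step 3: monotonicity in $x$ and convexity.} $\phi_m$ and $\phi_r$ are decreasing in $x$ and $m'\ge0$, so with continuity $U$ is nonincreasing. On $[X_{\frac{\Theta+\theta}{2}}(t),\infty)$ we have $m=\mathrm{id}$. There $\phi_m$ is an affine image of $(x-\kappa t^{\frac1{2s}})^{-\varepsilon^2}$ with positive coefficient, hence convex. Also $\phi_r\propto(x-t^{\frac1{2s}})^{-2s}$ is convex. Since the derivative is continuous at $X_{\theta_\varepsilon}(t)$, $\partial_x\underline u$ is nondecreasing on the whole half-line, which gives convexity.

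\textbf{Step 4: monotonicity in $t$.} All breakpoints are positive multiples of $t^{\frac1{2s}}$, so $U(\xi)=\Theta$ for $\xi\le\xi_*:=X_{\frac{1+\theta_0}{2}}(1)$ with $\xi_*>0$. For fixed $x>0$, $\xi=x\,t^{-\frac1{2s}}$ decreases in $t$, so $U(\xi)$ is nondecreasing in $t$. For $x\le0$ we have $\xi\le0<\xi_*$, so $\underline u\equiv\Theta$ there.

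The only delicate point is the derivative matching in Step 2, which is precisely what fixes $\kappa$; the rest is bookkeeping.
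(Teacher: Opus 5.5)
Your proof is correct. For regularity, convexity and monotonicity in $x$ it follows the paper, which also reads off the signs of $\partial_x\phi_{m}$, $\partial_x\phi_r$, $\partial_{xx}\phi_m$, $\partial_{xx}\phi_r$ and matches value and slope at $X_{\theta_\varepsilon}(t)=(1+\kappa)t^{\frac1{2s}}$ through the choice of $\kappa$. You are in fact more explicit than the paper on two points: the $\mathscr C^2$ gluing at $X_{\frac{1+\theta_0}{2}}(t)$ via $m'(\frac{1+\theta_0}{2})=m''(\frac{1+\theta_0}{2})=0$, and why piecewise convexity plus $\mathscr C^1$ matching gives convexity on the half-line.

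The genuine difference is monotonicity in $t$. The paper computes $\partial_t\phi_m$ and $\partial_t\phi_r$ explicitly in \eqref{dwb2}--\eqref{dphir} and checks that they are positive. It leaves implicit that piecewise monotonicity plus continuity survives as a fixed $x$ crosses the moving breakpoints. You instead observe that $\underline u(t,x)=U(xt^{-\frac1{2s}})$, with $U$ nonincreasing and $U\equiv\Theta$ on $(-\infty,\xi_*]$ for some $\xi_*>0$. Time monotonicity then follows from space monotonicity, with no computation and no issue at the moving junctions.

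Your route is shorter and cleaner. The paper's route produces explicit formulas that are reused in \Cref{prop-phi-subcritical} for the quantitative bound on $\partial_t\underline u$. Your scaling would give those formulas at once as well, via $\partial_t\underline u=-\frac{x}{2st}\partial_x\underline u$.

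The self-similar trick is specific to this subcritical construction. In the case $s=\frac12$, the subsolution depends on $t$ through $\ln t$, $t^{\frac14}$ and $t^{\sigma_\lambda}$, so no such scaling exists. There the paper must compute the time derivatives directly, and \Cref{lb-le2} obtains monotonicity only for $t\ge\widetilde t$.
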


\begin{proof}[{\bf Proof of Lemma \ref{lemma-s<1/2}}]
   Some direct calculations show that
\begin{equation}
   \label{dwb2}
\left\{\begin{aligned}
&\partial_t  \phi_m(t,x) =\frac{ \varepsilon (\Theta+\theta +(\phi_m-\theta_\varepsilon) \varepsilon)}{2s t}\left(1+\frac{\kappa t^{\frac{1}{2s}}}{x-\kappa t^\frac{1}{2s}}\right) >0,
\\
&\partial_x  \phi_m(t,x) =-\frac{\varepsilon (\Theta + \theta + (\phi_m-\theta_\varepsilon) \varepsilon)}{x-\kappa t^\frac{1}{2s}}<0,\\
&\partial_{xx}  \phi_m(t,x)=\frac{\varepsilon(1+\varepsilon^2)(\Theta + \theta + (\phi_m-\theta_\varepsilon) \varepsilon)}{\left(x-\kappa t^\frac{1}{2s}\right)^2}\ge 0,
\end{aligned}\right.
\end{equation}
and
\begin{equation}
   \label{dphir}
\left\{
\begin{aligned}
    &\partial_t \phi_r(t,x)=\frac{\kappa^{2s}\theta_\varepsilon }{(x-t^\frac{1}{2s})^{2s}}\left(1+\frac{ t^\frac{1}{2s}}{x-t^\frac{1}{2s}}\right)>0,\\
    &\partial_x \phi_r(t,x)=-\frac{2s\kappa^{2s}\theta_\varepsilon t}{(x-t^\frac{1}{2s})^{2s+1}}<0,\\
    &\partial_{xx} \phi_r(t,x)=\frac{2s(2s+1)\kappa^{2s}\theta_\varepsilon t}{(x-t^\frac{1}{2s})^{2s+2}}>0.
\end{aligned}
\right.
\end{equation}
As a result, we get that the functions $\phi_m$ and $\phi_r$ are increasing in $t$, and decreasing and convex in $x$. 
\par
Using $X_{\theta_\varepsilon}(t)=(1+\kappa)t^\frac{1}{2s}$, notice that
 \begin{equation*}
\partial_x \phi_m(t,X_{\theta_\varepsilon}(t))=- \frac{(\Theta+\theta)\varepsilon}{t^{\frac{1}{2s}}} \quad\text{and}\quad\partial_x \phi_r(t,X_{\theta_\varepsilon}(t))=-\frac{2s \theta_\varepsilon}{\kappa t^{\frac{1}{2s}}}.
 \end{equation*}
Therefore, using the definition of $\theta_\varepsilon$ and $\kappa$, we have
 \begin{equation*}
\phi_m(t,X_{\theta_\varepsilon }(t))
=  \phi_r(t,X_{\theta_\varepsilon }(t))\quad\text{and}\quad \partial_x \phi_m(t,X_{\theta_\varepsilon }(t))= \partial_x \phi_r(t,X_{\theta_\varepsilon }(t)).
 \end{equation*}
Then, the conclusion follows directly from the definition of $\underline{u}$.
\end{proof}

\par
 Before proving that $\underline u$ is a subsolution to \eqref{ceq}, we list some useful properties of $\underline u$.
\begin{proposition}
\label{prop-phi-subcritical}
 We have the following derivative estimates:
\begin{enumerate}[label=(\roman*)] 
    \item  We have
 \begin{equation*}
\underset{t \to  \infty}{\lim}\sup_{x\in\left[X_{\frac{1+\theta_0}{2}}(t),X_{\theta_\varepsilon}(t)\right]} |\partial_t \underline{u}(t,x)| = 0.
 \end{equation*}
And for all $(t,x)\in\R^+\times\left[X_{\frac{\Theta+\theta}{2}}(t),\infty\right)$, if $\varepsilon\in(0,\varepsilon_0)$, then
 \begin{equation*}
\partial_t \underline{u}(t,x)\le 
 \frac{3}{2}\left(\frac{\Theta+\theta}{2s}+1\right)\varepsilon \left[x-X_{\frac{1+\theta_0}{2}}(t)\right]^{-2s}.
 \end{equation*}
\item We have
 \begin{equation*}
\lim_{t\to\infty}\sup_{x\in\left[X_{\frac{1+\theta_0}{2}}(t),X_{\theta_\varepsilon}(t)\right]}[\partial_{xx}\underline{u}(t,x)]^-=0.
 \end{equation*}
\end{enumerate}
\end{proposition}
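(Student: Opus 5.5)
The plan is to reduce everything to explicit formulas by working in the scaled variable $r:=t^{\frac1{2s}}/(x-\kappa t^{\frac1{2s}})$. From the definition of $\phi_m$ one has the identity
\begin{equation*}
\Theta+\theta+(\phi_m-\theta_\varepsilon)\varepsilon=(\Theta+\theta)\,r^{\varepsilon^2},
\end{equation*}
so \eqref{dwb2} becomes
\begin{equation*}
\partial_t\phi_m=\frac{\varepsilon(\Theta+\theta)r^{\varepsilon^2}(1+\kappa r)}{2st},\qquad \partial_x\phi_m=-\frac{\varepsilon(\Theta+\theta)r^{\varepsilon^2}}{x-\kappa t^{\frac1{2s}}}.
\end{equation*}
Next I locate the junction points. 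Since $X_\lambda(t)=(\kappa+c_\lambda)t^{\frac1{2s}}$ with
\begin{equation*}
c_\lambda=\left(\frac{\Theta+\theta}{\Theta+\theta+(\lambda-\theta_\varepsilon)\varepsilon}\right)^{1/\varepsilon^2},
\end{equation*}
on $[X_\lambda(t),X_{\theta_\varepsilon}(t)]$ we have $c_\lambda t^{\frac1{2s}}\le x-\kappa t^{\frac1{2s}}\le t^{\frac1{2s}}$. Hence $1\le r\le c_\lambda^{-1}$, and $r$ is bounded by a constant independent of $t$.

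\emph{Part (i), first claim.} On $[X_{\frac{1+\theta_0}{2}}(t),X_{\theta_\varepsilon}(t)]$ we have $\underline u=m\circ\phi_m$, so $\partial_t\underline u=m'(\phi_m)\partial_t\phi_m$. By \eqref{smf} and the bound $r\le c_{\frac{1+\theta_0}{2}}^{-1}$, this gives $0\le\partial_t\underline u\le M_\theta\,C(\varepsilon,\theta,\Theta,\theta_0)\,t^{-1}\to0$.

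\emph{Part (ii).} Write $\partial_{xx}\underline u=m''(\phi_m)(\partial_x\phi_m)^2+m'(\phi_m)\partial_{xx}\phi_m$. The second term is nonnegative by \eqref{dwb2} and $m'\ge0$, so
\begin{equation*}
[\partial_{xx}\underline u]^-\le C_\theta(\partial_x\phi_m)^2\le C_\theta\,\varepsilon^2(\Theta+\theta)^2c^{-2\varepsilon^2}c^{-2}\,t^{-\frac1s}\to0,
\end{equation*}
using $x-\kappa t^{\frac1{2s}}\ge c\,t^{\frac1{2s}}$ with $c=c_{\frac{1+\theta_0}{2}}$.

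\emph{Part (i), second claim.} For $x\ge X_{\frac{\Theta+\theta}{2}}(t)$, $m$ is the identity on $[0,\frac{\Theta+\theta}{2}]$, so $\underline u$ equals $\phi_m$ or $\phi_r$, and I treat the two pieces separately. The right-hand side is always measured against $D:=x-X_{\frac{1+\theta_0}{2}}(t)$.

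\emph{Middle piece}, $X_{\frac{\Theta+\theta}{2}}\le x\le X_{\theta_\varepsilon}$. Here $\phi_m\le\frac{\Theta+\theta}{2}$ gives $(\Theta+\theta)r^{\varepsilon^2}\le(\Theta+\theta)(1+\frac\varepsilon2)\le\frac32(\Theta+\theta)$. Also $D\le X_{\theta_\varepsilon}(t)\le 2t^{\frac1{2s}}$, so $t^{-1}\le 2^{2s}D^{-2s}$. It remains to bound $1+\kappa r\le1+\kappa c_{\frac{\Theta+\theta}{2}}^{-1}$. Here $c_{\frac{\Theta+\theta}{2}}^{-1}\le(1+\frac\varepsilon2)^{1/\varepsilon^2}\le e^{\frac1{2\varepsilon}}$, so $\kappa r\lesssim\varepsilon^{-1}e^{-\frac1{2\varepsilon}}$. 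This is where the restriction $\varepsilon<\varepsilon_0$ should enter, to make this product a controlled constant. The result is
\begin{equation*}
\partial_t\underline u\le\frac32\cdot\frac{\Theta+\theta}{2s}\,\varepsilon\cdot C\,D^{-2s}.
\end{equation*}

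\emph{Tail piece}, $x\ge X_{\theta_\varepsilon}=(1+\kappa)t^{\frac1{2s}}$. From \eqref{dphir},
\begin{equation*}
\partial_t\phi_r=\kappa^{2s}\theta_\varepsilon\,\frac{x}{x-t^{\frac1{2s}}}\,(x-t^{\frac1{2s}})^{-2s}\le\kappa^{2s-1}(1+\kappa)\theta_\varepsilon(x-t^{\frac1{2s}})^{-2s}.
\end{equation*}
The definition of $\kappa$ gives $\theta_\varepsilon/\kappa=\frac{\Theta+\theta}{2s}\varepsilon$. With $\kappa<1$ and $2s<1$, the remaining factor satisfies $\kappa^{2s}(1+\kappa)\le2$. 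Finally, to replace $x-t^{\frac1{2s}}$ by $D$, I need $X_{\frac{1+\theta_0}{2}}(t)\le t^{\frac1{2s}}$, i.e.\ $\kappa+c_{\frac{1+\theta_0}{2}}\le1$. This again uses smallness of $\varepsilon$; failing that, the two distances are comparable on the tail since $x-t^{\frac1{2s}}\ge\kappa t^{\frac1{2s}}$.

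The main obstacle is the bookkeeping of constants so as to land exactly on $\frac32(\frac{\Theta+\theta}{2s}+1)\varepsilon$. In particular, controlling the factors $1+\kappa r$ (middle piece) and $\kappa^{2s}(1+\kappa)$, together with the comparison of $x-t^{\frac1{2s}}$ with $D$ (tail piece), is where the specific form of $\varepsilon_0$ has to be exploited. I would first try to absorb all these factors into the ``$+1$'' in the constant, using $\varepsilon<\varepsilon_0$.
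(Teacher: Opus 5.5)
Your treatment of the first claim in (i) and of (ii) matches the paper's argument. The identity $\Theta+\theta+(\phi_m-\theta_\varepsilon)\varepsilon=(\Theta+\theta)r^{\varepsilon^2}$ is a tidy way to organise the middle piece.

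The genuine gap is in the tail piece, and it is not a matter of bookkeeping. By bounding $\kappa^{2s}(1+\kappa)\le 2$ you discard the factor $\kappa^{2s}$, which leaves $\partial_t\phi_r\le 2\frac{\Theta+\theta}{2s}\varepsilon\,(x-t^{\frac1{2s}})^{-2s}$. Converting this into a bound in $D^{-2s}$ then requires $D\lesssim x-t^{\frac1{2s}}$ with a constant independent of $\varepsilon$, and that is false. At $x=X_{\theta_\varepsilon}(t)$ one has $x-t^{\frac1{2s}}=\kappa t^{\frac1{2s}}$ while $D=(1-c_{\frac{1+\theta_0}{2}})t^{\frac1{2s}}$, so the ratio is of order $\kappa^{-1}$.
\begin{itemize}
\item The condition you propose, $X_{\frac{1+\theta_0}{2}}(t)\le t^{\frac1{2s}}$, points the wrong way. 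It gives $D\ge x-t^{\frac1{2s}}$, hence $D^{-2s}\le (x-t^{\frac1{2s}})^{-2s}$, which is useless for an upper bound.
\item Your fallback comparability via $x-t^{\frac1{2s}}\ge\kappa t^{\frac1{2s}}$ has constant $\kappa^{-1}$. You would end with a coefficient of order $\varepsilon\kappa^{-2s}$, which tends to $+\infty$ as $\varepsilon\to0$.
\end{itemize}
No smallness of $\varepsilon$ fixes this, and an $O(\varepsilon)$ coefficient is exactly what Proposition \ref{prop-subsolution-subcritical} needs.

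The missing idea is that the prefactor $\kappa^{2s}$ in $\phi_r$ is there precisely to absorb this comparison. Use $X_{\frac{1+\theta_0}{2}}(t)\ge\kappa t^{\frac1{2s}}$, $\kappa<1$ (this is where $\varepsilon<\varepsilon_0$ enters) and $x\ge(1+\kappa)t^{\frac1{2s}}$ to get
\begin{equation*}
D\le t^{\frac1{2s}}+\kappa^{-1}\left[x-(1+\kappa)t^{\frac1{2s}}\right]=\kappa^{-1}\left(x-t^{\frac1{2s}}\right).
\end{equation*}
Then $D^{2s}\partial_t\phi_r\le\theta_\varepsilon(1+\kappa^{-1})=e^{-\frac1\varepsilon}+\frac{\Theta+\theta}{2s}\varepsilon\le\left(1+\frac{\Theta+\theta}{2s}\right)\varepsilon$.

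In the middle piece your estimate is structurally fine (your $C$ is bounded uniformly in $\varepsilon$), but it loses the stated constant. Two small changes recover it:
\begin{itemize}
\item Use $D\le X_{\theta_\varepsilon}(t)-\kappa t^{\frac1{2s}}=t^{\frac1{2s}}$ instead of $D\le 2t^{\frac1{2s}}$, which costs a factor $2^{2s}$.
\item Expand $1+\kappa r$ additively rather than treating it as a multiplicative constant. Indeed $\frac{\Theta+\theta}{2s}\varepsilon\cdot\kappa\, c_{\frac{\Theta+\theta}{2}}^{-1}\le e^{-\frac1{2\varepsilon}}\le\varepsilon$ for every $\varepsilon\in(0,1)$.
\end{itemize}
This gives $\partial_t\underline u\le\frac32\left(\frac{\Theta+\theta}{2s}\varepsilon+e^{-\frac1{2\varepsilon}}\right)D^{-2s}\le\frac32\left(\frac{\Theta+\theta}{2s}+1\right)\varepsilon\, D^{-2s}$. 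Only $\varepsilon<1$ is needed here, not the specific form of $\varepsilon_0$.
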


\begin{proof}[{\bf Proof of Proposition \ref{prop-phi-subcritical}}]
\textbf{\# Proof of (i).}
\#\# \textbf{Take $x\in\left[X_{\frac{1+\theta_0}{2}}(t),X_{\theta_\varepsilon}(t)\right]$.} Using \eqref{dwb2}, we write
 \begin{equation*}
\partial_t \underline{u}(t,x)=(m'\circ  \phi_m(t,x)) \partial_t \phi_m(t,x)=(m'\circ  \phi_m(t,x)) \frac{ \varepsilon (\Theta+\theta +(\phi_m-\theta_\varepsilon) \varepsilon)}{2st}\left(1+\frac{\kappa t^{\frac{1}{2s}}}{x-\kappa t^\frac{1}{2s}}\right).
 \end{equation*}
It follows from \eqref{smf}  that 
 \begin{equation*}
0\le\partial_t \underline{u}(t,x)\le M_{\theta}\frac{ \varepsilon (\Theta+\theta +(\phi_m-\theta_\varepsilon) \varepsilon)}{2st}\left(1+\frac{\kappa t^{\frac{1}{2s}}}{x-\kappa t^\frac{1}{2s}}\right).
 \end{equation*}
Thus, for all $x\in\left[X_{\frac{1+\theta_0}{2}}(t),X_{\theta_\varepsilon}(t)\right]$,
 \begin{equation*}
|\partial_t \underline{u}(t,x)|\lesssim t^{-1} \xrightarrow[t\to\infty]{} 0.
 \end{equation*} 
 \par
\#\# \textbf{Take $x\in\left[X_{\frac{\Theta+\theta}{2}}(t),X_{\theta_\varepsilon }(t)\right]$.} 
Using $X_{\lambda}(t)= \left[\kappa+\left(\frac{\Theta+\theta}{\Theta+\theta + (\lambda-\theta_\varepsilon)\varepsilon}\right)^{\frac{1}{\varepsilon^2}}\right]t^\frac{1}{2s}$ for $\lambda\in \left[\theta_\varepsilon,\frac{\Theta+\theta}{2}\right]$ and $m'\equiv 1$ on $(0,\frac{\Theta+\theta}{2})$, we have
 \begin{equation*}
\left[x-X_{\frac{1+\theta_0}{2}}(t)\right]^{2s}\partial_t\underline{u}(t,x)\le \frac{(\Theta+\theta)\left(\varepsilon+2\right)\varepsilon}{4s}\left(1+\kappa\left(1+\frac{\varepsilon}{2}\right)^\frac{1}{\varepsilon^2}\right).
 \end{equation*}
Notice that
 \begin{equation*}
\left(1+\frac{\varepsilon}{2}\right)^\frac{1}{\varepsilon^2}=  \exp\left\{\frac{\ln\left(1+\frac{\varepsilon}{2}\right)}{\varepsilon^2}\right\}\le e^{\frac{1}{2\varepsilon}}.
 \end{equation*}
Thus, by $e^{-\frac{1}{2\varepsilon}}\le \varepsilon<1$ and 
$\kappa=\frac{2s}{\Theta+\theta}\varepsilon^{-1}e^{-\frac{1}{\varepsilon}}$,
 \begin{equation*}
\left[x-X_{\frac{1+\theta_0}{2}}(t)\right]^{2s}\partial_t\underline{u}(t,x)
\le \frac{3(\Theta+\theta)}{4s}\varepsilon+\frac{3}{2} e^{-\frac{1}{2\varepsilon}}
\le \frac{3}{2}\left(\frac{\Theta+\theta}{2s}+1\right)\varepsilon.
 \end{equation*}
Therefore, we arrive at 
 \begin{equation*}
\partial_t\underline{u}(t,x)\le \frac{3}{2}\left(\frac{\Theta+\theta}{2s}+1\right)\varepsilon\left[x-X_{\frac{1+\theta_0}{2}}(t)\right]^{-2s}.
 \end{equation*}
\par \#\# \textbf{Take $x\in[X_{\theta_\varepsilon }(t),+\infty)$.} 
Since $X_{\frac{1+\theta_0}{2}}(t)\ge \kappa  t^\frac{1}{2s}$ and $\kappa< 1$ for $\varepsilon\le\varepsilon_0$,
\begin{equation}\label{sc-x-t-x-t}
x-X_{\frac{1+\theta_0}{2}}(t)\le 
t^\frac{1}{2s}+x-(\kappa+1) t^\frac{1}{2s} \le
t^\frac{1}{2s}+\kappa^{-1}\left[x-(\kappa+1) t^\frac{1}{2s}\right] =\kappa^{-1} (x-t^\frac{1}{2s}),
\end{equation}
and thus, we get
 \begin{equation*}
\begin{aligned}
\left[x-X_{\frac{1+\theta_0}{2}}(t)\right]^{2s}\partial_t \underline{u}(t,x)&\le \frac{\kappa^{2s}\theta_\varepsilon \left[x-X_{\frac{1+\theta_0}{2}}(t)\right]^{2s} }{(x-t^\frac{1}{2s})^{2s}}\left(1+\frac{ t^\frac{1}{2s}}{x-t^\frac{1}{2s}}\right)\\
&\le \theta_\varepsilon\left(1+\kappa^{-1} \right) =e^{-\frac{1}{\varepsilon}}+\frac{\Theta+\theta}{2s}\varepsilon \le \left(1+\frac{\Theta+\theta}{2s}\right)\varepsilon .
\end{aligned}
 \end{equation*}
 As a result, for $\varepsilon\le\varepsilon_0$ we achieve 
 \begin{equation*}
\partial_t\underline{u}(t,x)\le \left(1+\frac{\Theta+\theta}{2s}\right)\varepsilon \left[x-X_{\frac{1+\theta_0}{2}}(t)\right]^{-2s}.
 \end{equation*}

\par\textbf{\# Proof of (ii).}
By the definition of $\underline u$, it is easy to get that $\underline u(t,\cdot)$ is convex on $\left[X_{\frac{\Theta+\theta}{2}}(t),\infty\right)$. Some direct calculations show that, for $x\in \left[X_{\frac{1+\theta_0}{2}}(t), X_{\theta_\varepsilon }(t)\right]$,  
 \begin{equation*}
\partial_x\underline u=(m'\circ \phi_m) \partial_x \phi_m\quad \text{and}\quad
\partial_{xx}\underline u=(m''\circ \phi_m) (\partial_x \phi_m)^2+(m'\circ \phi_m) \partial_{xx} \phi_m.
 \end{equation*}
By the convexity of $\phi_m$ and \eqref{smf}, we arrive at
 \begin{equation*}
\partial_{xx}\underline u\ge  -C_\theta (\partial_x  \phi_m)^2.
 \end{equation*}
Thus, for all $x\in \left[X_{\frac{1+\theta_0}{2}}(t), X_{\theta_\varepsilon }(t)\right]$, 
 \begin{equation*}
[\partial_{xx}\underline{u}]^-\le C_\theta (\partial_x  \phi_m)^2\le \frac{(\Theta+\theta+\varepsilon)^2C_\theta \varepsilon^2 }{\left[X_{\frac{1+\theta_0}{2}}(t)-\kappa t^\frac{1}{2s}\right]^2}\lesssim t^{-\frac{1}{s}}\xrightarrow[t\to\infty]{}0.
 \end{equation*}
\end{proof}

By Proposition \ref{prop-D[phi]}, we have the following proposition.
\begin{proposition}\label{prop-D[phi]-s<1/2}
Denote $\varepsilon_1:=\min\left\{\varepsilon_0,\left(\frac{\Theta-\theta}{4\j_0^2(\Theta+\theta)}\right)^{\frac{1}{2s}}\frac{1-2s}{6s},\frac{ (\Theta-\theta)(1-2s)^{2s}(2s)^{1-4s}}{8(\Theta+\theta)\j_0^2}\right\}$.
\begin{itemize}
    \item When $x\in\left(-\infty,X_{\frac{\Theta+\theta}{2}}(t)+1\right]$, for any $\mathcal{C}>0$, there is a time 
$\underline{t}_1$ such that for $t\ge  \underline{t}_1$ we have
     \begin{equation*}
    \mathcal{D}[\underline u](t,x)\ge -\mathcal{C}.
     \end{equation*}
\item When $x\in \left[X_{\frac{\Theta+\theta}{2}}(t)+1,+\infty\right)$, if $\varepsilon\le \varepsilon_1$, then there is a time $\underline{t}_2$ such that for all $t\ge \underline{t}_2$,
     \begin{equation*}
    \mathcal{D}[\underline u](t,x)\ge \frac{\Theta-\theta}{8s\j_0[x-X_{\frac{1+\theta_0}{2}}(t)]^{2s}}.
     \end{equation*}
\end{itemize}
\end{proposition}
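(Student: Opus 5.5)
We apply \Cref{prop-D[phi]} to $v=\underline u(t,\cdot)$ with the choices
\begin{equation*}
\xi_1=X_{\frac{1+\theta_0}{2}}(t),\qquad \xi_2=X_{\frac{\Theta+\theta}{2}}(t),\qquad \Omega=\{X_{\theta_\varepsilon}(t)\}.
\end{equation*}
These choices satisfy the hypotheses of that proposition. By \Cref{lemma-s<1/2}, $v$ is nonincreasing, constant equal to $\Theta$ on $(-\infty,\xi_1]$, and convex on $[\xi_2,\infty)$. It is also $\mathscr C^2$ away from $\Omega$. The gap $\xi_2-\xi_1\to\infty$ as $t\to\infty$, so $\xi_2-\xi_1\ge R_0$ for large $t$. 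Likewise $X_{\theta_\varepsilon}(t)-\xi_2\to\infty$, so $\Omega\subset[\xi_2+1,\infty)$ for large $t$.

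\textbf{First bullet.} Fix $\mathcal C>0$ and use part I of \Cref{prop-D[phi]}. Choose $B_1>1$ large enough that
\begin{equation*}
\Theta\int_{B_1}^\infty J\le \j_0\Theta(2s)^{-1}B_1^{-2s}\le \mathcal C/2.
\end{equation*}
With $B_1$ now fixed, the remaining term is bounded by $(\j_1+\int_1^{B_1}z^2J)$ times $\sup[\partial_{xx}\underline u]^-$ over $[x-B_1,x+B_1]\setminus\Omega$. On $(-\infty,\xi_1)$ this second derivative vanishes, on $[\xi_2,\infty)$ its negative part is zero by convexity, and on $[\xi_1,X_{\theta_\varepsilon}]$ it tends to $0$ uniformly by \Cref{prop-phi-subcritical}(ii). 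Hence for $t\ge\underline t_1$ this term is at most $\mathcal C/2$, which gives $\mathcal D[\underline u]\ge-\mathcal C$.

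\textbf{Second bullet.} We use part II of \Cref{prop-D[phi]} with $y=\partial_x\underline u(t,x)$ (the function is $\mathscr C^1$). This gives
\begin{equation*}
\D[\underline u]\ge \partial_x\underline u\int_1^{B_2}zJ-\frac{\j_0\underline u}{2sB_2^{2s}}+\frac{\Theta-\underline u}{2s\j_0 d^{2s}},\qquad d:=x-X_{\frac{1+\theta_0}{2}}(t).
\end{equation*}
Since $x\ge\xi_2$ we have $\underline u\le\frac{\Theta+\theta}{2}$, so $\Theta-\underline u\ge\frac{\Theta-\theta}{2}$ and the last term is at least $\frac{\Theta-\theta}{4s\j_0d^{2s}}$. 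The target is half of this. It therefore suffices to show that the first two (negative) terms together are bounded in absolute value by $\frac{\Theta-\theta}{8s\j_0d^{2s}}$.

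To do this, pick $B_2=\beta d$ for a suitable constant $\beta$. Then $\int_1^{B_2}zJ\le \j_0(1-2s)^{-1}(\beta d)^{1-2s}$, and the second term is at most $\j_0\underline u(2s)^{-1}(\beta d)^{-2s}$. Two pointwise bounds along the profile are needed:
\begin{equation*}
|\partial_x\underline u|\lesssim\varepsilon\, d^{-1}\qquad\text{and}\qquad \underline u\lesssim\varepsilon\quad\text{(or } \underline u\lesssim \text{a small multiple)}.
\end{equation*}
These are checked separately on the middle region, using \eqref{dwb2}, and on the tail region, using \eqref{dphir} together with \eqref{sc-x-t-x-t}, i.e.\ $d\le\kappa^{-1}(x-t^{1/2s})$. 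On the middle region, $|\partial_x\phi_m|\le \varepsilon(\Theta+\theta+\varepsilon)/(x-\kappa t^{1/2s})$ and $x-\kappa t^{1/2s}\ge d$. On the tail region, $|\partial_x\phi_r|=2s\phi_r/(x-t^{1/2s})\le 2s\theta_\varepsilon\kappa^{-1}d^{-1}\lesssim\varepsilon d^{-1}$, using $\theta_\varepsilon\kappa^{-1}\asymp\varepsilon$. The $\underline u$ term is handled by taking $\beta$ large: actually $\underline u\le\frac{\Theta+\theta}{2}$ only, so one balances $\beta$. Substituting, the sum of the two negative terms is bounded by
\begin{equation*}
\bigl(C\varepsilon\beta^{1-2s}/(1-2s)+\j_0(\Theta+\theta)(4s)^{-1}\beta^{-2s}\bigr)d^{-2s}.
\end{equation*}
First choose $\beta$ so large that the $\beta^{-2s}$ term is at most $\frac{\Theta-\theta}{16s\j_0}$. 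Then choose $\varepsilon$ small, with the thresholds in $\varepsilon_1$ encoding exactly these two inequalities. If $\beta d<1$, the first integral is empty and that term simply vanishes.

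\textbf{Main obstacle.} The hardest step is the uniform gradient bound $|\partial_x\underline u|\lesssim\varepsilon/d$ across the junction at $X_{\theta_\varepsilon}$, together with the bookkeeping of constants so that the choice of $\beta$ and the smallness of $\varepsilon$ match the explicit $\varepsilon_1$. If $\underline u$ is not small enough for the $B_2^{-2s}$ term, I would instead bound that term using $\underline u\le\frac{\Theta+\theta}{2}$ and absorb it via the large $\beta$ as above.
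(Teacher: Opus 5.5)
Your proof is correct. The first bullet is the paper's argument, but for the second bullet you take a different route.

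\textbf{How the routes differ.} The paper applies part II of Proposition~\ref{prop-D[phi]} with the adaptive radius $B_2=\frac{1-2s}{2s}\frac{\underline u}{|\partial_x\underline u|}$. This equalises the two negative terms into $-\frac{2\j_0}{(1-2s)^{2s}(2s)^{1-2s}}|\partial_x\underline u|^{2s}\underline u^{1-2s}$. The paper then bounds $d^{2s}|\partial_x\underline u|^{2s}\underline u^{1-2s}$ separately on two pieces:
\begin{itemize}
\item on the middle piece it is of order $\varepsilon^{2s}$, via $x-\kappa t^{\frac{1}{2s}}\ge d$;
\item on the tail it is of order $\varepsilon$, via $\underline u\le\theta_\varepsilon$ and \eqref{sc-x-t-x-t}.
\end{itemize}
These two estimates give the last two entries of $\varepsilon_1$.

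You instead freeze $B_2=\beta d$, with $\beta$ depending only on $s,\Theta,\theta,\j_0$. You then need only one gradient bound valid on the whole convex part, plus the crude bound $\underline u\le\frac{\Theta+\theta}{2}$. The smallness of $\underline u$ in the tail is never used. Also, $B_2>1$ is automatic because $d\to\infty$, so your remark about $\beta d<1$ is unnecessary. The paper's choice is pointwise optimal up to constants and makes the $\underline u^{1-2s}|\partial_x\underline u|^{2s}$ balance visible. Yours is shorter and independent of the region.

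\textbf{Correction to your bookkeeping.} Choosing $\beta$ imposes no condition on $\varepsilon$, so $\varepsilon_1$ does not encode ``these two inequalities''. Tracking constants works as follows.
\begin{itemize}
\item Set $A:=\frac{\Theta-\theta}{4\j_0^2(\Theta+\theta)}$ and $\beta:=A^{-1/(2s)}$. The $B_2^{-2s}$ term is then at most $\frac{\Theta-\theta}{16s\j_0}d^{-2s}$.
\item Use the gradient bound $|\partial_x\underline u|\le\frac32(\Theta+\theta)\varepsilon d^{-1}$. On the middle piece this follows from $\phi_m-\theta_\varepsilon\le\frac{\Theta+\theta}{2}$ and $\varepsilon\le1$; on the tail it follows from $\theta_\varepsilon\kappa^{-1}=\frac{\Theta+\theta}{2s}\varepsilon$.
\item The first term is then bounded by $\frac{3\j_0(\Theta+\theta)}{2(1-2s)}\varepsilon\beta^{1-2s}d^{-2s}$. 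This is at most $\frac{\Theta-\theta}{16s\j_0}d^{-2s}$ exactly when $\varepsilon\le\frac{1-2s}{6s}A^{1/(2s)}$, which is precisely the second entry of $\varepsilon_1$.
\end{itemize}
This match is no accident. At that threshold, with the extremal values $\underline u=\frac{\Theta+\theta}{2}$ and $|\partial_x\underline u|=\frac32(\Theta+\theta)\varepsilon d^{-1}$, the paper's $B_2$ equals $\beta d$.

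Hence $\varepsilon\le\varepsilon_1$ suffices, and your route does not even need the third entry. Your constant $\Theta+\theta+\varepsilon$ is too crude for this. It yields the stated threshold only when $\varepsilon\le\frac{\Theta+\theta}{2}$, and $\varepsilon\le\varepsilon_1$ does not guarantee that when $\Theta+\theta$ is small. Use the sharper constant $\frac32(\Theta+\theta)$ instead.
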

\begin{proof}[{\bf Proof of Proposition \ref{prop-D[phi]-s<1/2}}]
Since $X_{\frac{\Theta+\theta}{2}}(t)-X_{\frac{1+\theta_0}{2}}(t)\ge R_0$ and $X_{\theta_\varepsilon}(t)-X_{\frac{\Theta+\theta}{2}}(t)\ge 1$ for $t$ large enough, say $t\ge t_0$, we use \Cref{prop-D[phi]} to prove \Cref{prop-D[phi]-s<1/2} by taking $\xi_1=X_{\frac{1+\theta_0}{2}}(t)$, $\xi_2=X_{\frac{\Theta+\theta}{2}}(t)$ and $\Omega=\{X_{\theta_\varepsilon}(t)\}$.
\par
{\bf \# Take $x\in\left(-\infty,X_{\frac{\Theta+\theta}{2}}(t)+1\right]$.}
 In view of \Cref{prop-D[phi]}, for any $\mathcal{C}>0$, by taking $B_1$ large enough,
  \begin{equation*}
 \int_{B_1}^{\infty}J(z) \dd z \le \frac{\mathcal{C}}{2}.
  \end{equation*}
 In view of \Cref{prop-phi-subcritical}, there is a time 
$\underline{t}_1>t_0$ such that for $t\ge \underline{t}_1$ we have
   \begin{equation*}
\D[\underline u](t,x) \ge -\mathcal{C}.
 \end{equation*}
\par {\bf \# Take $x\in\left[X_{\frac{\Theta+\theta}{2}}(t)+1,\infty\right)$.}
Some direct calculations show that 
 \begin{equation*}
\frac{\underline{u}}{|\partial_x\underline{u}|}=
\begin{cases}
    \dfrac{\phi_m(x-\kappa t^\frac{1}{2s})}{\varepsilon (\Theta+\theta +(\phi_m-\theta_\varepsilon) \varepsilon)}\gtrsim t^\frac{1}{2s}, &x\in\left[X_{\frac{\Theta+\theta}{2}}(t),X_{\theta_\varepsilon }(t)\right],\\[8pt]
    \frac{x-t^\frac{1}{2s}}{2s}\gtrsim  t^\frac{1}{2s}, &x\in[X_{\theta_\varepsilon }(t),+\infty).
\end{cases}
 \end{equation*}
Thus, there is a time $\underline{t}_2$ such that $\frac{1-2s}{2s}\frac{\underline u}{|\partial_x\underline u|}>1$ for all $t\ge \underline{t}_2$.
 By Proposition \ref{prop-D[phi]}, taking $B_2=\frac{1-2s}{2s}\frac{\underline u}{|\partial_x\underline u|}$, we obtain 
 \begin{equation*}
\mathcal{D}[\underline{u}](t,x)\ge -\frac{2\j_0}{(1-2s)^{2s}(2s)^{1-2s}}|\partial_x\underline{u}|^{2s}\underline{u}^{1-2s}+ \frac{\Theta-\theta}{4s\j_0\left[x-X_{\frac{1+\theta_0}{2}}(t)\right]^{2s}}.
 \end{equation*}
For $x\in\left[X_{\frac{\Theta+\theta}{2}}(t),X_{\theta_\varepsilon }(t)\right]$, since $ X_{\frac{1+\theta_0}{2}}(t)\ge \kappa t^\frac{1}{2s}$ and $\phi_m\le \frac{\Theta+\theta}{2}$, we have
 \begin{equation*}
\begin{aligned}
&\frac{2\j_0}{(1-2s)^{2s}(2s)^{1-2s}}\left[x-X_{\frac{1+\theta_0}{2}}(t)\right]^{2s}|\partial_x\underline{u}|^{2s}\underline{u}^{1-2s}\\
&\le\frac{2\j_0}{(1-2s)^{2s}(2s)^{1-2s}}\frac{\varepsilon^{2s}(\Theta + \theta + (\phi_m-\theta_\varepsilon) \varepsilon)^{2s}\phi_m^{1-2s}}{[x-\kappa t^\frac{1}{2s}]^{2s}}\le \frac{3^{2s}\j_0 (\Theta+\theta) }{(1-2s)^{2s}(2s)^{1-2s}}\varepsilon^{2s} \le \frac{\Theta-\theta}{8s\j_0},
\end{aligned}
 \end{equation*}
as long as $\varepsilon\le  \left(\frac{\Theta-\theta}{4\j_0^2(\Theta+\theta)}\right)^{\frac{1}{2s}}\frac{1-2s}{6s}$. 
\par 
For $x\in[X_{\theta_\varepsilon }(t),+\infty)$, it then follows from \eqref{sc-x-t-x-t}  that
 \begin{equation*}
\begin{aligned}    
\frac{2\j_0}{(1-2s)^{2s}(2s)^{1-2s}}\left[x-X_{\frac{1+\theta_0}{2}}(t)\right]^{2s}|\partial_x\underline{u}|^{2s}\underline{u}^{1-2s}
&\le   \frac{2\j_0 \phi_r}{(1-2s)^{2s}(2s)^{1-4s}} \left[\frac{x-X_{\frac{1+\theta_0}{2}}(t)}{x-t^\frac{1}{2s}}\right]^{2s}\\
&\le   \frac{2\j_0 }{(1-2s)^{2s}(2s)^{1-4s}}\frac{\theta_\varepsilon }{\kappa}\\
&\le  \frac{2\j_0 (\Theta+\theta)}{(1-2s)^{2s}(2s)^{2-4s}}\varepsilon \le \frac{\Theta-\theta}{8s\j_0},
\end{aligned}
 \end{equation*}
as long as $\varepsilon \le \frac{ (\Theta-\theta)(1-2s)^{2s}(2s)^{1-4s}}{8(\Theta+\theta)\j_0^2}$. Therefore, if $\varepsilon\le \varepsilon_1$, for all $t\ge \underline{t}_2$,
 \begin{equation*}
\D[\underline u](t,x)\ge \frac{\Theta-\theta}{8s\j_0\left[x-X_{\frac{1+\theta_0}{2}}(t)\right]^{2s}}.
 \end{equation*}

\end{proof}
Now, let us show that $\underline u$ is a subsolution to \eqref{ceq}.
\begin{proposition}\label{prop-subsolution-subcritical}
 Let $\varepsilon^\star: = \min\left\{\frac{\Theta-\theta}{6\j_0(\Theta+\theta+2s)},\varepsilon_1 \right\}$. If $\varepsilon\le \varepsilon^\star$, then there is a time $t^\star>0$ such that for all $t\ge t^\star$, $\underline u(t,\cdot)$ is a subsolution to \eqref{ceq} on $\R$.
\end{proposition}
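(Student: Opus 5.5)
We check $\partial_t\underline u-\D[\underline u]-f(\underline u)\le 0$ pointwise, region by region, using the regularity from \Cref{lemma-s<1/2} ($\underline u(t,\cdot)\in\mathscr C^1(\R)$, $\mathscr C^2$ away from $X_{\theta_\varepsilon}(t)$, where the junction is convex). This lets us apply \Cref{prop-D[phi]-s<1/2}, which rests on \Cref{prop-D[phi]} with $\xi_1=X_{\frac{1+\theta_0}{2}}(t)$, $\xi_2=X_{\frac{\Theta+\theta}{2}}(t)$ and $\Omega=\{X_{\theta_\varepsilon}(t)\}$. Throughout, $f\ge 0$ on $[0,1]$ and $\underline u\le\Theta<\theta_0\le 1$. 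We split $\R$ at $X_{\frac{1+\theta_0}{2}}(t)$ and at $X_{\frac{\Theta+\theta}{2}}(t)+1$.

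\emph{Step 1: the plateau $x\le X_{\frac{1+\theta_0}{2}}(t)$.} Here $\underline u\equiv\Theta$ and $\partial_t\underline u=0$. It suffices to have $\D[\underline u]+f(\Theta)\ge 0$, with $f(\Theta)>0$ since $\Theta\in(\theta,1)$. The first item of \Cref{prop-D[phi]-s<1/2} with $\mathcal C=f(\Theta)/2$ gives $\D[\underline u]\ge -f(\Theta)/2$ for $t\ge\underline t_1$.

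\emph{Step 2: the transition layer $X_{\frac{1+\theta_0}{2}}(t)\le x\le X_{\frac{\Theta+\theta}{2}}(t)+1$.} On this region $\underline u$ takes values in $[\underline u(t,X_{\frac{\Theta+\theta}{2}}(t)+1),\Theta]$. Since $X_\lambda$ is continuous in $\lambda$ and the shift by $1$ is negligible against $t^{\frac1{2s}}$, for large $t$ we have $\underline u\ge \frac{\Theta+\theta}{2}-\eta$ with $\eta$ small. Hence $\underline u\in[\theta',\Theta]$ for some $\theta'\in(\theta,\frac{\Theta+\theta}{2})$, and $f(\underline u)\ge c_0:=\min_{[\theta',\Theta]}f>0$. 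By \Cref{prop-phi-subcritical}(i), $\partial_t\underline u\to0$ uniformly there, so $\partial_t\underline u\le c_0/2$ for large $t$. By \Cref{prop-D[phi]-s<1/2} with $\mathcal C=c_0/2$, $\D[\underline u]\ge -c_0/2$. Summing the three bounds gives the inequality. Step 1 and Step 2 only fix how large $t$ must be; they impose no condition on $\varepsilon$.

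\emph{Step 3: the leading edge $x\ge X_{\frac{\Theta+\theta}{2}}(t)+1$.} This is the heart of the argument and the only place where $\varepsilon\le\varepsilon^\star$ is used. We drop $f\ge0$. For $\varepsilon\le\varepsilon_1$ and $t\ge\underline t_2$, \Cref{prop-D[phi]-s<1/2} gives
\[
\D[\underline u]\ge \frac{\Theta-\theta}{8s\j_0}\left[x-X_{\frac{1+\theta_0}{2}}(t)\right]^{-2s}.
\]
\Cref{prop-phi-subcritical}(i) gives
\[
\partial_t\underline u\le \frac32\Big(\frac{\Theta+\theta}{2s}+1\Big)\varepsilon\left[x-X_{\frac{1+\theta_0}{2}}(t)\right]^{-2s}
=\frac{3(\Theta+\theta+2s)}{4s}\,\varepsilon\left[x-X_{\frac{1+\theta_0}{2}}(t)\right]^{-2s}.
\]
The required inequality therefore reduces to $\frac{3(\Theta+\theta+2s)}{4s}\varepsilon\le\frac{\Theta-\theta}{8s\j_0}$, that is, $\varepsilon\le\frac{\Theta-\theta}{6\j_0(\Theta+\theta+2s)}$. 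This is exactly the first term in $\varepsilon^\star$, which explains its form.

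Taking $t^\star$ to be the maximum of $\underline t_1$, $\underline t_2$, the time from Step 2 and the time $t_0$ used in \Cref{prop-D[phi]-s<1/2} concludes the proof. The main obstacle is to keep the constants consistent in Step 3. A secondary point is Step 2: we must make sure the overlap strip of width $1$ beyond $X_{\frac{\Theta+\theta}{2}}(t)$ stays strictly above $\theta$, so that the reaction term absorbs the small negative part of $\D[\underline u]$. At the point $X_{\theta_\varepsilon}(t)$, where $\underline u$ is not $\mathscr C^2$, the estimate is still valid because $\underline u$ is $\mathscr C^1$ there with a convex junction and \Cref{prop-D[phi]} already allows it.
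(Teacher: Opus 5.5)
Your proof is correct and follows essentially the same route as the paper. The paper treats $(-\infty,X_{\frac{\Theta+\theta}{2}}(t)+1]$ as one zone with $\nu:=\inf_{[\frac{\Theta+3\theta}{4},\Theta]}f$, which is your Steps 1--2 merged with $\theta'=\frac{\Theta+3\theta}{4}$. On the leading edge it combines \Cref{prop-phi-subcritical}(i) with \Cref{prop-D[phi]-s<1/2} exactly as in your Step 3, obtaining the same condition $\varepsilon\le\frac{\Theta-\theta}{6\j_0(\Theta+\theta+2s)}$.
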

\begin{proof}[{\bf Proof of Proposition \ref{prop-subsolution-subcritical}}]
 {\bf \# Take $x\in\left(-\infty,X_{\frac{\Theta+\theta}{2}}(t)+1\right]$.}
Denote $\nu:=\inf_{\left[\frac{\Theta+3\theta}{4},\Theta\right]} f>0$. 
Using $X_{\frac{\Theta+3\theta}{4}}(t)-X_{\frac{\Theta+\theta}{2}}(t)\ge 1$ for $t$ large enough, there is $t^\#>0$ such that for all $t\ge t^\#$, 
 \begin{equation*}
\underline{u}(t,x)\ge \underline{u}\left(t,X_{\frac{\Theta+3\theta}{4}}(t)\right)=\frac{\Theta+3\theta}{4},
 \end{equation*}
and then,
 \begin{equation*}
f(\underline{u}(t,x))\ge \nu.
 \end{equation*}
It follows from Proposition \ref{prop-phi-subcritical} that there is a time $\underline{t}_1>0$ such that for all $t\ge \underline{t}_1$,
 \begin{equation*}
\partial_t \underline{u}\le \frac{\nu}{2}.
 \end{equation*}
Therefore, by Proposition \ref{prop-D[phi]-s<1/2}, for all $t\ge \max\{\underline{t}_1,t^\#\}$ we achieve 
 \begin{equation*}
\partial_t \underline u-\D[\underline u]-f(\underline u)\le \frac{\nu}{2}+\frac{\nu}{2}-\nu=0.
 \end{equation*}
\par {\bf \# Take $x\in\left[X_{\frac{\Theta+\theta}{2}}(t)+1,\infty\right)$.} 
By Proposition \ref{prop-phi-subcritical} and Proposition \ref{prop-D[phi]-s<1/2}, we achieve 
 \begin{equation*}
\partial_t \underline u-\mathcal{D}[\underline u]-f(\underline u)
\le \left[\frac{3}{2}\left(\frac{\Theta+\theta}{2s}+1\right)\varepsilon -\frac{\Theta-\theta}{8s\j_0}\right]\left[x-X_{\frac{1+\theta_0}{2}}(t)\right]^{-2s} \le 0,
 \end{equation*}
as long as $\varepsilon \le \varepsilon^\star=\min\left\{\frac{\Theta-\theta}{6\j_0(\Theta+\theta+2s)},\varepsilon_1 \right\}$.

\par Combining the above results on two spatial zones, taking $t^\star:=\max\{\underline{t}_1,\underline{t}_2,t^\#\}$, for $\varepsilon\le \varepsilon^\star$, we therefore prove that $\underline u$ is a subsolution to \eqref{ceq} on $[t^\star,\infty)\times \R$.  

\end{proof}

\subsection{Construction of a suitable supersolution}

For $t\gg 1$, let us construct a piecewise supersolution $\overline{u}$ as follows,
 \begin{equation*}
\overline{u}(t,x)=
\begin{cases}
2, &x\le X_2(t),\\[5pt]
 w_l(t,x), &X_2(t)\le x\le X_{\frac{\theta}{2}}(t), \\[5pt]
w_m(t,x), & X_{\frac{\theta}{2}}(t)\le x\le X_{\frac{\theta}{4}}(t), \\[5pt]
 w_r(t,x), &x\ge X_{\frac{\theta}{4}}(t),
\end{cases}
 \end{equation*}
where $X_\lambda(t)$ is the position of level sets of $\overline{u}$, that is $X_\lambda(t):=X_\lambda(t;\overline{u})$.

 \begin{figure}[H]
\centering
\begin{tikzpicture}

\pgfmathsetmacro{\XA}{1.4}   
\pgfmathsetmacro{\XB}{4.2}   
\pgfmathsetmacro{\XC}{6.8}   
\pgfmathsetmacro{\Xmax}{9.6}

\pgfmathsetmacro{\yA}{2.0}  
\pgfmathsetmacro{\yB}{0.40} 
\pgfmathsetmacro{\yC}{0.20} 

\pgfmathsetmacro{\mB}{-0.12}

\pgfmathsetmacro{\Lone}{\XB-\XA}
\pgfmathsetmacro{\Ltwo}{\XC-\XB}

\pgfmathsetmacro{\aone}{(\yA+\mB*\Lone-\yB)/(\Lone*\Lone)}
\pgfmathsetmacro{\bone}{\mB-2*\aone*\Lone}

\pgfmathsetmacro{\atwo}{(\yC-\yB-\mB*\Ltwo)/(\Ltwo*\Ltwo)}
\pgfmathsetmacro{\btwo}{\mB}

\pgfmathsetmacro{\mC}{2*\atwo*\Ltwo+\btwo}

\pgfmathsetmacro{\alphaR}{-(\mC)/\yC}

\begin{axis}[
axis lines = left,
width = 11.5cm,
height = 7cm,
xmin = 0.3,
xmax = \Xmax,
ymin = -0.05,
ymax = 2.2,
xtick = {\XA,\XB,\XC},
xticklabels = {$X_2$,$X_{\frac{\theta}{2}}$,$X_{\frac{\theta}{4}}$},
ytick = {\yC,\yB,\yA},
yticklabels = {$\frac{\theta}{4}$,$\frac{\theta}{2}$,$2$},
tick label style={font=\small},
label style={font=\small},
samples=200,
clip=false,
declare function={
wl(\x)=\aone*(\x-\XA)^2+\bone*(\x-\XA)+\yA;
wm(\x)=\atwo*(\x-\XB)^2+\btwo*(\x-\XB)+\yB;
wr(\x)=\yC*exp(-\alphaR*(\x-\XC));
}
]

\addplot[
black,
line width=1.5pt,
domain=0.3:\XA
] {\yA};

\addplot[
red,
line width=1.5pt,
domain=\XA:\XB
] {wl(x)};

\addplot[
blue,
line width=1.5pt,
domain=\XB:\XC
] {wm(x)};

\addplot[
black,
line width=1.5pt,
domain=\XC:\Xmax
] {wr(x)};

\draw[dashed] (axis cs:\XA,0) -- (axis cs:\XA,\yA);
\draw[dashed] (axis cs:\XB,0) -- (axis cs:\XB,\yB);
\draw[dashed] (axis cs:\XC,0) -- (axis cs:\XC,\yC);

\draw[dashed] (axis cs:0.3,\yB) -- (axis cs:\XB,\yB);
\draw[dashed] (axis cs:0.3,\yC) -- (axis cs:\XC,\yC);

\fill (axis cs:\XA,\yA) circle (1.4pt);
\fill (axis cs:\XB,\yB) circle (1.4pt);
\fill (axis cs:\XC,\yC) circle (1.4pt);

\node[red] at (axis cs:2.65,1.25) {$w_l$}; 
\node[blue] at (axis cs:5.25,0.45) {$w_m$};
\node at (axis cs:8.10,0.26) {$w_r$};

\end{axis}
\end{tikzpicture}
\caption{Schematic graph of the piecewise supersolution $\overline u(t,\cdot)$.}
\label{fig:piecewise-supersolution}
\end{figure}

We construct an explicit $\mathscr{C}^{1,1}_{\textrm{loc}}(X_2(t),\infty)$ supersolution $\overline{u}$ by defining the three parts: $w_l$, $w_m$ and $w_r$. Let us choose $w_l$, $w_m$ and $w_r$ as follows,
 \begin{equation*}
w_l(t,x):=\frac{\theta^2}{4(1-2s)\mu}\left(\frac{x}{t^{\frac{1}{2s}-1}}-\frac{s}{(1-2s)\mu}\gamma t\right)^{-1},\quad
 w_m(t,x):=\frac{\theta}{2}-\mu \frac{\left[x-X_{\frac{\theta}{2}}(t)+1\right]^{1-2s}-1}{t^{\frac{1}{2s}-1}},
 \end{equation*}
and 
 \begin{equation*}
w_r(t,x):=\frac{\frac{\theta}{4} Z^{2s}(t)}{\left[x-X_{\frac{\theta}{4}}(t)+Z(t)\right]^{2s}},
 \end{equation*}
where $\mu>0$ is to be determined later and $Z(t):=\frac{s\theta}{2(1-2s)\mu}t^{\frac{1}{2s}-1}\left(\frac{\theta}{4\mu}t^{\frac{1}{2s}-1}+1\right)^{\frac{2s}{1-2s}}$.

Notice that 
\begin{equation}\label{sc-ls}
X_\lambda(t)=
\begin{cases}
\dfrac{s}{(1-2s)\mu}\gamma t^\frac{1}{2s}+\dfrac{\theta^2}{4(1-2s)\mu \lambda}t^{\frac{1}{2s}-1}, & \lambda\in \left[\frac{\theta}{2},2\right],\\[5pt]
   X_{\frac{\theta}{2}}(t)+\left(\frac{\theta-2\lambda}{2\mu}t^{\frac{1}{2s}-1}+1\right)^\frac{1}{1-2s}-1, &\lambda\in \left[\frac{\theta}{4},\frac{\theta}{2}\right),\\[5pt]
   X_{\frac{\theta}{4}}(t)+\left[\left(\frac{\theta}{4\lambda}\right)^\frac{1}{2s}-1\right]Z(t), &\lambda\in \left(0,\frac{\theta}{4}\right).
\end{cases}
\end{equation}
We have the following estimates for $Z(t)$ and $Z'(t)$.
\begin{lemma}\label{lemma-Zt2}
  There is a time $t_0>1$ such that for all $t>t_0$, we have
     \begin{equation*}
   \zeta_1 t^\frac{1}{2s} \le Z(t)\le  2\zeta_1 t^\frac{1}{2s},  \quad  Z'(t)\ge \zeta_2 t^{\frac{1}{2s}-1},
 \end{equation*}
  where $\zeta_1:=\frac{2s}{1-2s}\left(\frac{\theta}{4\mu}\right)^\frac{1}{1-2s}$ and $\zeta_2:=\left(\frac{\theta}{4\mu}\right)^\frac{1}{1-2s}$.
\end{lemma}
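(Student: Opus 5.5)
Since $Z(t)$ is given explicitly, the plan is a direct asymptotic analysis. Write $a(t):=\frac{\theta}{4\mu}t^{\frac{1}{2s}-1}$. Because $s<\frac12$, the exponent $\frac{1}{2s}-1$ is positive, so $a(t)\to\infty$ as $t\to\infty$. The definition then reads
\begin{equation*}
Z(t)=\frac{2s}{1-2s}\,a(t)\,\bigl(a(t)+1\bigr)^{\frac{2s}{1-2s}},
\end{equation*}
using $\frac{s\theta}{2(1-2s)\mu}t^{\frac{1}{2s}-1}=\frac{2s}{1-2s}a(t)$. Note that $1+\frac{2s}{1-2s}=\frac{1}{1-2s}$, and that $a(t)^{\frac{1}{1-2s}}=\left(\frac{\theta}{4\mu}\right)^{\frac{1}{1-2s}}t^{\frac{1}{2s}}$, because $(\frac{1}{2s}-1)\frac{1}{1-2s}=\frac{1}{2s}$.

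\textbf{Bounds on $Z$.} Since $a+1\ge a$, we get $Z(t)\ge \frac{2s}{1-2s}a^{\frac{1}{1-2s}}=\zeta_1 t^{\frac{1}{2s}}$ for every $t>0$. For the upper bound,
\begin{equation*}
Z(t)=\zeta_1 t^{\frac{1}{2s}}\,(1+a^{-1})^{\frac{2s}{1-2s}}.
\end{equation*}
Because $a(t)\to\infty$, there is $t_0>1$ such that $(1+a^{-1})^{\frac{2s}{1-2s}}\le 2$ for $t>t_0$, which gives $Z\le 2\zeta_1 t^{\frac{1}{2s}}$.

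\textbf{Bound on $Z'$.} Differentiate $Z$ as a function of $a$:
\begin{equation*}
\frac{dZ}{da}=\frac{2s}{1-2s}\,(a+1)^{\frac{2s}{1-2s}-1}\Bigl[(a+1)+\tfrac{2s}{1-2s}a\Bigr]\ge \frac{2s}{1-2s}\,(a+1)^{\frac{2s}{1-2s}}\ge \frac{2s}{1-2s}\,a^{\frac{2s}{1-2s}}.
\end{equation*}
Also $a'(t)=\left(\frac{1}{2s}-1\right)\frac{a}{t}=\frac{1-2s}{2s}\frac{a}{t}$. Multiplying, the constants cancel and
\begin{equation*}
Z'(t)\ge \frac{a^{\frac{1}{1-2s}}}{t}=\left(\frac{\theta}{4\mu}\right)^{\frac{1}{1-2s}}t^{\frac{1}{2s}-1}=\zeta_2 t^{\frac{1}{2s}-1}.
\end{equation*}
This bound holds for all $t>0$, so only the upper bound on $Z$ forces us to take $t_0$ large.

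The only step that needs care is getting the exponent bookkeeping exactly right so that the constants come out as $\zeta_1$ and $\zeta_2$. There is no real obstacle: the argument is routine calculus once $Z$ is written in terms of $a(t)$.
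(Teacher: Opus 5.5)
Your proof is correct and takes essentially the same route as the paper. You rewrite $Z$ in terms of $a(t)=\frac{\theta}{4\mu}t^{\frac{1}{2s}-1}$, bound $(a+1)^{\frac{2s}{1-2s}}$ from below by $a^{\frac{2s}{1-2s}}$ and from above by $2a^{\frac{2s}{1-2s}}$ for large $t$, and differentiate directly, discarding the nonnegative extra term; your chain-rule expression for $Z'$ coincides with the paper's explicit formula, and your remark that only the upper bound on $Z$ needs $t$ large is a correct minor sharpening.
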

\begin{proof}[{\bf Proof of \Cref{lemma-Zt2}}]
By $Z(t)=\frac{s\theta}{2(1-2s)\mu}t^{\frac{1}{2s}-1}\left(\frac{\theta}{4\mu}t^{\frac{1}{2s}-1}+1\right)^{\frac{2s}{1-2s}}$, for $t$ large enough, say $t\ge t_0$, we arrive at, 
 \begin{equation*}
 \frac{2s}{1-2s}\left(\frac{\theta}{4\mu}\right)^\frac{1}{1-2s} t^\frac{1}{2s}\le Z(t)\le \frac{4s}{1-2s}\left(\frac{\theta}{4\mu}\right)^\frac{1}{1-2s} t^\frac{1}{2s}.
 \end{equation*}
Some calculations show that 
 \begin{equation*}
Z'(t)=\frac{\theta}{4\mu}t^{\frac{1}{2s}-2}\left(\frac{\theta}{4\mu}t^{\frac{1}{2s}-1}+1\right)^\frac{2s}{1-2s}\left[ 1+\frac{s\theta}{2(1-2s)\mu} t^{\frac{1}{2s}-1} \left(\frac{\theta}{4\mu}t^{\frac{1}{2s}-1}+1\right)^{-1}\right].
 \end{equation*}
It follows that
 \begin{equation*}
Z'(t)\ge\left(\frac{\theta}{4\mu}\right)^\frac{1}{1-2s} t^{\frac{1}{2s}-1}.
 \end{equation*}
   
\end{proof}

We list some useful properties of the function $\overline{u}$.
\begin{proposition}\label{prop:pw2}
The function $\overline{u}(\cdot,x)$ is nondecreasing on $\R^+$, and $\overline{u}(t,\cdot)\in \mathscr{C}^{1,1}_{\textrm{\textrm{loc}}}(X_2(t),\infty)$ is nonincreasing on $\R$ and convex on $(X_2(t),\infty)$. Moreover, if $\gamma\ge \gamma_0:=\frac{\theta (4s-1)_+}{4s}\left(\frac{\theta}{4\mu}+1\right)^{\frac{2s}{1-2s}}$, for $t\ge t_0$, we have 
     \begin{equation*}
    \left\{
    \begin{aligned}
        &\partial_t w_l(t,x)\ge  \frac{2 \gamma}{\theta^2}  w_l^2(t,x),\\
        &\partial_t w_m(t,x)\ge  \frac{\gamma}{2\left[x-X_{\frac{\theta}{2}}(t)+1\right]^{2s}}, \\
        &\partial_t w_r(t,x)\ge \frac{\zeta_0\mu^{-\frac{2s}{1-2s}}}{\left[x-X_{\frac{\theta}{4}}(t)+Z(t)\right]^{2s}},
    \end{aligned}\right.
    \quad \text{and} \quad 
     \left\{
     \begin{aligned}
     &\partial_{xx} w_l(t,x) =\frac{ 32(1-2s)^2\mu^2}{\theta^4 t^{\frac{1}{s}-2}}w_l^3(t,x),\\
     &\partial_{xx} w_m(t,x) =\frac{2s(1-2s)\mu}{t^{\frac{1}{2s}-1}\left[x-X_{\frac{\theta}{2}}(t)+1\right]^{2s+1}},\\
     &\partial_{xx} w_r(t,x) =  \frac{s(2s+1)\theta Z^{2s}(t)}{2\left[x-X_{\frac{\theta}{4}}(t)+Z(t)\right]^{2s+2}},
     \end{aligned}\right.
     \end{equation*}
    where $\zeta_0:=\frac{s\theta}{2}\left(\frac{1-2s}{4s}\right)^{1-2s}\left(\frac{\theta}{4}\right)^\frac{2s}{1-2s}$.
   
\end{proposition}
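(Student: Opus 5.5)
The proof is a direct verification on each piece. I would first check that the pieces fit together in a $\mathscr{C}^1$ way at the two interior junctions $X_{\frac{\theta}{2}}(t)$ and $X_{\frac{\theta}{4}}(t)$, then invoke \Cref{lemma-c11}, and finally compute $\partial_t$ and $\partial_{xx}$ of $w_l,w_m,w_r$ explicitly. Throughout I write $y:=x-X_{\frac{\theta}{4}}(t)$ and
\begin{equation*}
a:=\frac{s}{(1-2s)\mu},\qquad c:=\frac{\theta^2}{4(1-2s)\mu},
\end{equation*}
so that $w_l=c\,(x t^{1-\frac{1}{2s}}-a\gamma t)^{-1}$, and I use the level-set formulas \eqref{sc-ls}.

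\textbf{Junctions and regularity.} Differentiating $w_l$ gives $\partial_x w_l=-\frac{t^{1-\frac1{2s}}}{c}w_l^2$. At $w_l=\frac{\theta}{2}$ this equals $-(1-2s)\mu\, t^{1-\frac{1}{2s}}$, which is exactly $\partial_x w_m$ at $x=X_{\frac{\theta}{2}}(t)$, where the bracket in $w_m$ equals $1$. At $X_{\frac{\theta}{4}}(t)$ the bracket in $w_m$ equals $\left(\frac{\theta}{4\mu}t^{\frac{1}{2s}-1}+1\right)^{\frac{1}{1-2s}}$, so $\partial_x w_m$ there is
\begin{equation*}
-(1-2s)\mu\, t^{1-\frac{1}{2s}}\left(\frac{\theta}{4\mu}t^{\frac{1}{2s}-1}+1\right)^{-\frac{2s}{1-2s}}.
\end{equation*}
On the other side, $\partial_x w_r=-\frac{2s(\theta/4)}{Z(t)}$ at $y=0$. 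Equating the two determines $Z(t)$, and it gives exactly the stated formula. Values match by the definition of the level sets. Each piece is smooth, strictly decreasing and has positive second derivative on its interval. Hence \Cref{lemma-c11}, applied at each junction, yields $\overline u(t,\cdot)\in\mathscr{C}^{1,1}_{\textrm{loc}}(X_2(t),\infty)$. Convexity on $(X_2(t),\infty)$ follows since $\partial_x\overline u$ is continuous there and nondecreasing on each piece. Monotonicity in $x$ on $\R$ is clear, because the flat part at level $2$ lies to the left. The second-derivative identities are routine differentiations; for $w_l$ one uses $\partial_{xx}w_l=2c^{-2}t^{2-\frac1s}w_l^3$.

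\textbf{Time derivatives.} For $w_l$,
\begin{equation*}
\partial_t w_l=\frac{w_l^2}{c}\left[a\gamma+\left(\tfrac{1}{2s}-1\right)x\,t^{-\frac{1}{2s}}\right].
\end{equation*}
On $x\ge X_2(t)\ge a\gamma t^{\frac1{2s}}$ the bracket is at least $\frac{a\gamma}{2s}$. Since $\frac{a}{2sc}=\frac{2}{\theta^2}$, this gives $\partial_t w_l\ge \frac{2\gamma}{\theta^2}w_l^2$.

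For $w_m$, differentiation gives two terms. The first is $\mu(\frac1{2s}-1)t^{-\frac1{2s}}([\cdot]^{1-2s}-1)\ge0$. The second is $\mu(1-2s)t^{1-\frac1{2s}}[\cdot]^{-2s}X'_{\frac{\theta}{2}}(t)$. From \eqref{sc-ls}, $X'_{\frac\theta2}(t)\ge \frac{a\gamma}{2s}t^{\frac1{2s}-1}$, and the product is $\frac{\gamma}{2}[\cdot]^{-2s}$.

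For $w_r$, one computes
\begin{equation*}
\partial_t w_r=\frac{s\theta}{2}Z^{2s-1}(y+Z)^{-2s-1}\left[Z'y+Z X'_{\frac{\theta}{4}}\right].
\end{equation*}
It suffices to show that both $Z'$ and $X'_{\frac\theta4}$ are bounded below by a constant times $t^{\frac1{2s}-1}$. The bound for $Z'$ is \Cref{lemma-Zt2}. For $X'_{\frac\theta4}$, both added terms in \eqref{sc-ls} are increasing in $t$, so $X'_{\frac\theta4}\ge X'_{\frac\theta2}\ge \frac{a\gamma}{2s}t^{\frac1{2s}-1}$. Combining these with $Z\asymp t^{\frac1{2s}}$ from \Cref{lemma-Zt2} gives the bracket $\gtrsim (y+Z)\,Z^{1-2s}$, hence the claimed lower bound $\zeta_0\mu^{-\frac{2s}{1-2s}}(y+Z)^{-2s}$.

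\textbf{Main obstacle.} The main obstacle is this $w_r$ estimate: recovering the exact constant $\zeta_0\mu^{-\frac{2s}{1-2s}}$. This is also where I expect the threshold $\gamma\ge\gamma_0$ to be used. The constant $\zeta_0$ presumably comes from $\frac{s\theta}{2}Z^{2s-1}\min\{Z',Z^{-1}\cdot ZX'\}$ with $Z\le 2\zeta_1t^{\frac1{2s}}$. To make $Z X'_{\frac\theta4}$ dominate the required multiple of $Z^{2-2s}$ uniformly, I would try choosing $\gamma_0$, which involves $(4s-1)_+$, so that $a\gamma/(2s)$ beats the relevant multiple of $\zeta_2$. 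Finally, monotonicity of $\overline u$ in $t$ follows from the positivity of all three time derivatives, together with the facts that the level sets $X_\lambda(t)$ increase in $t$ and that the value $2$ on the left is maximal.
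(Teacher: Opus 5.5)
The $\mathscr{C}^1$ matching at $X_{\frac\theta2}(t)$ and $X_{\frac\theta4}(t)$ (which recovers $Z(t)$ exactly), the second-derivative identities, and your bounds on $\partial_t w_l$ and $\partial_t w_m$ are correct and agree with the paper.

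\textbf{The gap: the constant in the $w_r$ bound.} The statement claims the precise constant $\zeta_0\mu^{-\frac{2s}{1-2s}}$, and this $\mu$-dependence is used later when $\mu$ is taken small. Your splitting $\partial_t w_r=\frac{s\theta}{2}Z^{2s-1}(y+Z)^{-2s-1}\bigl[Z'y+ZX'_{\frac\theta4}\bigr]$ is valid. However, with only $X'_{\frac\theta4}\ge X'_{\frac\theta2}\ge\frac{a\gamma}{2s}t^{\frac1{2s}-1}$ it yields $\partial_t w_r\ge\frac{s\theta}{2}(2\zeta_1)^{2s-1}\min\bigl\{\zeta_2,\frac{a\gamma}{2s}\bigr\}(y+Z)^{-2s}$. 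This reproduces the claimed constant only when $\gamma\ge 2(1-2s)\mu\zeta_2$.

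The hypothesis $\gamma\ge\gamma_0$ does not give that. $\gamma_0$ is prescribed, not yours to choose, and it equals $0$ for $s\le\frac14$. So your plan to use $\gamma_0$ to make $\frac{a\gamma}{2s}$ beat $\zeta_2$ cannot work.

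In the paper, $\gamma_0$ plays a different role. It uses the other splitting $\partial_t w_r=\frac{s\theta Z^{2s-1}Z'}{2(y+Z)^{2s}}+\frac{s\theta Z^{2s}(X'_{\frac\theta4}-Z')}{2(y+Z)^{2s+1}}$. A direct computation gives $X'_{\frac\theta4}-Z'\ge\frac{\gamma-\gamma_0}{2(1-2s)\mu}t^{\frac1{2s}-1}$ for $t\ge1$; the factor $(4s-1)_+$ comes from $\frac{1-2s}{2s}-\frac{2s}{1-2s}=\frac{1-4s}{2s(1-2s)}$. So the second term can be dropped, and only $Z'\ge\zeta_2t^{\frac1{2s}-1}$ and $Z\le2\zeta_1t^{\frac1{2s}}$ are used. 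This gives exactly $\frac{s\theta}{2}(2\zeta_1)^{2s-1}\zeta_2=\zeta_0\mu^{-\frac{2s}{1-2s}}$.

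\textbf{How to repair your route.} Your splitting can be completed, and it then needs no condition on $\gamma$ at all. What you lost is the derivative of the extra term in $X_{\frac\theta4}=X_{\frac\theta2}+(A+1)^{\frac1{1-2s}}-1$, where $A:=\frac{\theta}{4\mu}t^{\frac1{2s}-1}$. Indeed,
\[
\frac{d}{dt}(A+1)^{\frac1{1-2s}}=\frac{\theta}{8s\mu}t^{\frac1{2s}-2}(A+1)^{\frac{2s}{1-2s}}\ge\frac{\theta}{8s\mu}t^{\frac1{2s}-2}A^{\frac{2s}{1-2s}}=\frac{\zeta_2}{2s}t^{\frac1{2s}-1},
\]
so $X'_{\frac\theta4}\ge\zeta_2t^{\frac1{2s}-1}$ for every $\gamma\ge0$. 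Hence $Z'y+ZX'_{\frac\theta4}\ge\zeta_2t^{\frac1{2s}-1}(y+Z)$ for $y\ge0$. Together with $Z^{2s-1}\ge(2\zeta_1)^{2s-1}t^{1-\frac1{2s}}$ (valid for $t\ge t_0$ since $2s-1<0$), this yields exactly $\partial_t w_r\ge\zeta_0\mu^{-\frac{2s}{1-2s}}(y+Z)^{-2s}$.

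Once repaired, your route is slightly stronger than the paper's: the $w_r$ bound and the monotonicity of $w_r$ in $t$ hold for any $\gamma\ge0$. The paper needs $\gamma\ge\gamma_0$ precisely to discard the possibly negative term $X'_{\frac\theta4}-Z'$.
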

\begin{proof}[{\bf Proof of \Cref{prop:pw2}}]
A direct calculation gives 
 \begin{equation*}
\partial_t w_l(t,x) =\frac{2\gamma}{\theta^2} w_l^2(t,x)+\frac{(1-2s) w_l(t,x)}{2s t} >  \frac{2\gamma}{\theta^2} w_l^2(t,x)>0.
 \end{equation*}
And we have
 \begin{equation*}
\left\{
\begin{aligned}
&\partial_x w_l(t,x) =-\frac{ 4(1-2s)\mu w_l^2(t,x)}{\theta^2 t^{\frac{1}{2s}-1}}< 0,\\
& \partial_{xx} w_l(t,x) = \frac{ 32(1-2s)^2\mu^2  w_l^3(t,x)}{\theta^4 t^{\frac{1}{s}-2}}> 0.
\end{aligned}\right.
 \end{equation*} 
Using $X_{\frac{\theta}{2}}'(t) \ge \frac{\gamma}{2(1-2s)\mu} t^{\frac{1}{2s}-1}$, we get 
 \begin{equation*}
\begin{aligned}
    &\partial_t w_m(t,x) =\frac{ \mu(1-2s) X_{\frac{\theta}{2}}'(t)}{t^{\frac{1}{2s}-1}\left[x-X_{\frac{\theta}{2}}(t)+1\right]^{2s}}+\frac{(1-2s)\mu \left[(x-X_{\frac{\theta}{2}}(t)+1)^{1-2s}-1\right]}{2st^{\frac{1}{2s}}}\\
&\quad\quad\  \ge\frac{\gamma}{2\left[x-X_{\frac{\theta}{2}}(t)+1\right]^{2s}}.
\end{aligned}
 \end{equation*}
Some direct calculations show that 
\begin{equation*}
\left\{\begin{aligned}
&\partial_x w_m(t,x)=  \frac{-(1-2s)\mu}{t^{\frac{1}{2s}-1}\left[x-X_{\frac{\theta}{2}}(t)+1\right]^{2s}}<0,\\
&\partial_{xx} w_m(t,x) =\frac{2s(1-2s)\mu}{t^{\frac{1}{2s}-1}\left[x-X_{\frac{\theta}{2}}(t)+1\right]^{2s+1}}\ge 0.
\end{aligned}\right.
\end{equation*}
\par
In view of Lemma \ref{lemma-Zt2}, by $2s-1<0$, for $t\ge t_0$, we have
 \begin{equation*}
Z^{2s-1}(t)Z'(t)\ge 2^{2s-1}\zeta_1^{2s-1}\zeta_2=\left(\frac{1-2s}{4s}\right)^{1-2s}\left(\frac{\theta}{4}\right)^\frac{2s}{1-2s}\mu^{-\frac{2s}{1-2s}}.
 \end{equation*}
By the definition of $X_\lambda(t)$ and $Z(t)$, we write
 \begin{equation*}
X_{\frac{\theta}{4}}(t)-Z(t)=X_{\frac{\theta}{2}}(t)-1+\left[\frac{\theta}{4\mu}t^{\frac{1}{2s}-1}+1\right]^{\frac{1}{1-2s}} -\frac{s\theta}{2(1-2s)\mu}t^{\frac{1}{2s}-1}\left[\frac{\theta}{4\mu}t^{\frac{1}{2s}-1}+1\right]^{\frac{2s}{1-2s}}.
 \end{equation*}
Then, for all $t\ge 1$, by take $\gamma\ge \gamma_0=\frac{\theta (4s-1)_+}{4s}\left(\frac{\theta}{4\mu}+1\right)^{\frac{2s}{1-2s}}$,  we get 
 \begin{equation*}
\begin{aligned}
&X_{\frac{\theta}{4}}'(t)-Z'(t)\\
&=\frac{\gamma}{2(1-2s)\mu} t^{\frac{1}{2s}-1}+\frac{\theta}{4s\mu }t^{\frac{1}{2s}-2}+\frac{\theta}{4\mu}t^{\frac{1}{2s}-2}\left(\frac{\theta}{4\mu}t^{\frac{1}{2s}-1}+1\right)^{\frac{2s}{1-2s}}\left[\frac{1-2s}{2s}-\frac{2s}{1-2s}\frac{\frac{\theta}{4\mu}t^{\frac{1}{2s}-1}}{\frac{\theta}{4\mu}t^{\frac{1}{2s}-1}+1}\right]\\
&\ge \frac{\gamma}{2(1-2s)\mu}t^{\frac{1}{2s}-1}+\frac{\theta (1-4s)}{8s(1-2s)\mu}t^{\frac{1}{2s}-2}\left(\frac{\theta}{4\mu}t^{\frac{1}{2s}-1}+1\right)^{\frac{2s}{1-2s}}\\
&\ge  \frac{1}{2(1-2s)\mu} \left[\gamma-\frac{\theta (4s-1)_+}{4s}\left(\frac{\theta}{4\mu}+1\right)^{\frac{2s}{1-2s}} \right]t^{\frac{1}{2s}-1} \ge 0.
\end{aligned}
 \end{equation*}
Thus, for $\gamma\ge \gamma_0$ and $t\ge t_0$, we have
 \begin{equation*}
\begin{aligned}
    &\partial_t w_r(t,x) =\frac{s \theta Z^{2s-1}(t)Z'(t)}{2\left[x-X_{\frac{\theta}{4}}(t)+Z(t)\right]^{2s}}+\frac{s\theta Z^{2s}(t)\left[X_{\frac{\theta}{4}}'(t)-Z'(t)\right]}{2\left[x-X_{\frac{\theta}{4}}(t)+Z(t)\right]^{1+2s}}\\
&\qquad\ge\frac{s \theta Z^{2s-1}(t)Z'(t)}{2\left[x-X_{\frac{\theta}{4}}(t)+Z(t)\right]^{2s}} \ge\frac{\zeta_0\mu^{-\frac{2s}{1-2s}}}{\left[x-X_{\frac{\theta}{4}}(t)+Z(t)\right]^{2s}} >0,
\end{aligned}
 \end{equation*}
where $\zeta_0=\frac{s\theta}{2}\left(\frac{1-2s}{4s}\right)^{1-2s}\left(\frac{\theta}{4}\right)^\frac{2s}{1-2s}$.
Moreover, by direct some calculations, we have
 \begin{equation*}
\left\{\begin{aligned}
&\partial_x w_r(t,x) = -\frac{s\theta Z^{2s}(t)}{2\left[x-X_{\frac{\theta}{4}}(t)+Z(t)\right]^{2s+1}}<0,\\
& \partial_{xx} w_r(t,x) =  \frac{s(2s+1)\theta Z^{2s}(t)}{2\left[x-X_{\frac{\theta}{4}}(t)+Z(t)\right]^{2s+2}}>0.
\end{aligned}\right.
 \end{equation*}
 Therefore, collecting the above calculations and using the definition of $\overline{u}$, the function $\overline{u}$ is nondecreasing on $\R^+$, and nonincreasing on $x\in \R$ and convex on $x\in(X_2(t),\infty)$.
In view of \Cref{lemma-c11}, using 
 \[
 \left\{
 \begin{aligned}
 &w_l\left(t,X_{\frac{\theta}{2}}(t)\right)= w_m\left(t,X_{\frac{\theta}{2}}(t)\right),  
  &\partial_x w_l\left(t,X_{\frac{\theta}{2}}(t)\right)= \partial_x w_m\left(t,X_{\frac{\theta}{2}}(t)\right),\\
  &w_m\left(t,X_{\frac{\theta}{4}}(t)\right)= w_r\left(t,X_{\frac{\theta}{4}}(t)\right),  
  &\partial_x w_m\left(t,X_{\frac{\theta}{4}}(t)\right)= \partial_x w_r\left(t,X_{\frac{\theta}{4}}(t)\right),
  \end{aligned}\right.
 \]
 one has $\overline{u}(t,\cdot)\in \mathscr{C}^{1,1}_{\mathrm{\textrm{loc}}}(X_2(t),\infty)$.
\end{proof}
Let us give the estimates for $\mathcal{D}[\overline{u}]$.
\begin{proposition}\label{prop:dec2}
Let $J$ satisfy \Cref{kernel_hypothesis}. Then the following estimates hold:
\begin{itemize}
    \item 
When $x \in \left[X_1(t),X_{\frac{\theta}{2}}(t)\right]$, there are $\mathcal{C}>0$ and $\widetilde t_0>0$ such that for all $t\ge \widetilde t_0$,
 \begin{equation*}
\D[\overline{u}](t,x)\le \mathcal{C}.
 \end{equation*}
       \item  When $x\in \left[X_{\frac{\theta}{2}}(t),X_{\frac{\theta}{4}}(t)\right]$, there is $\widetilde t_1\ge1$ such that for all $t\ge \widetilde t_1$,
     \begin{equation*}
    \mathcal{D}[\overline{u}](t,x)\le 2\mathcal{C}_0\left[x-X_{\frac{\theta}{2}}(t)+1\right]^{-2s}.
     \end{equation*}
    \item  When $x\in \left[X_{\frac{\theta}{4}}(t),\infty\right)$, there is $\widetilde t_2\ge 1$ such that for all $t\ge \widetilde t_2$,
     \begin{equation*}
    \mathcal{D}[\overline{u}](t,x)\le2c_s^{-2s}\mathcal{C}_0\left[x-X_{\frac{\theta}{4}}(t)+Z(t)\right]^{-2s},
     \end{equation*}
     where $c_s:=\min\left\{1,\frac{1-2s}{4s}\right\}$.
     \end{itemize}

\end{proposition}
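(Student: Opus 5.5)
Each of the three bullets will be derived from the general tools of Section 2, after checking the regularity and convexity recorded in \Cref{prop:pw2}. Throughout, $\|\overline u\|_{\mathscr{L}^\infty}=2$, so the constant $\mathcal{C}_0$ of \Cref{gen-upper-bound-of-D} (with $v=\overline u$) is $\mathcal{C}_0=2(1-2s)^{-1}(2s)^{-1}\j_0$.

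\textbf{First bullet.} For $x\in[X_1(t),X_{\frac{\theta}{2}}(t)]$, apply \Cref{D-bounded} with $\Omega=[X_1(t),X_{\frac{\theta}{2}}(t)]$. From \eqref{sc-ls}, $X_1(t)-X_2(t)=\frac{\theta^2}{8(1-2s)\mu}t^{\frac1{2s}-1}\to\infty$, so for $t$ large the set $\Omega+[-1,1]$ lies inside $(X_2(t),\infty)$. There $\overline u$ is $\mathscr{C}^{1,1}$, and by \Cref{lemma-c11} we get $|\delta_z^2\overline u(x)|\le Lz^2$. The constant $L$ is the supremum of $\partial_{xx}w_l$, $\partial_{xx}w_m$ and $\partial_{xx}w_r$ over the relevant windows. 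Using the explicit formulas in \Cref{prop:pw2} together with $w_l\le 2$, each term is of order $t^{-(\frac1s-2)}$ or $t^{-(\frac1{2s}-1)}$, hence bounded uniformly. \Cref{D-bounded} then gives the bound $\mathcal{C}=\j_1 L+4\j_0/s$.

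\textbf{Second and third bullets.} Apply \Cref{gen-upper-bound-of-D} with a convexity anchor $A$ chosen so that $x-A$ is comparable to the desired distance. For the middle zone, take $A=X_{\frac{\theta}{2}}(t)-1$. By the $I_2$ step, we need $\overline u$ convex on $[A,\infty)$ and $A\ge X_2(t)$; this holds for $t$ large because $\overline u$ is convex on $(X_2(t),\infty)$. Then $x-A=x-X_{\frac{\theta}{2}}(t)+1\ge1$. The local term is $\j_1\mathcal{Q}[\overline u](x)\le \j_1\sup_{[x-1,x+1]}\partial_{xx}\overline u$. Using \Cref{prop:pw2}, this is at most
\[
\frac{C\mu}{t^{\frac1{2s}-1}}\left[x-X_{\frac{\theta}{2}}(t)\right]^{-1-2s}
\]
with the shift handled by the $+1$. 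Near the left junction, the contribution of $w_l$ is $O(t^{-(\frac1s-2)})$, since there $w_l\approx\theta/2$. All these quantities are $\le \mathcal{C}_0[x-X_{\frac{\theta}{2}}(t)+1]^{-2s}$ once $t\ge\widetilde t_1$, because the extra factor $t^{-(\frac1{2s}-1)}$ tends to zero while the distance stays bounded by $X_{\frac{\theta}{4}}-X_{\frac{\theta}{2}}\lesssim t^{\frac1{2s}}$. Concretely, the ratio of the local term to the tail term is $\lesssim t^{1-\frac1{2s}}(\cdot)^{-1}\to0$.

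For $x\ge X_{\frac{\theta}{4}}(t)$, take $A=X_{\frac{\theta}{4}}(t)-c_sZ(t)$. This requires $c_sZ(t)\le X_{\frac{\theta}{4}}(t)-X_2(t)$, which follows from \Cref{lemma-Zt2} and \eqref{sc-ls}. Indeed, $X_{\frac{\theta}{4}}-X_{\frac{\theta}{2}}\approx(\frac{\theta}{4\mu})^{\frac1{1-2s}}t^{\frac1{2s}}=\zeta_2t^{\frac1{2s}}$, and $c_s\cdot2\zeta_1\le\zeta_2$ precisely because $c_s\le\frac{1-2s}{4s}$; this explains the choice of $c_s$. Then
\[
x-A\ge c_s\left[x-X_{\frac{\theta}{4}}(t)+Z(t)\right],
\]
so the tail term is bounded by $c_s^{-2s}\mathcal{C}_0[\cdot]^{-2s}$. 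The local term $\j_1\partial_{xx}w_r\lesssim Z^{-2}$ (plus a $w_m$ contribution near the junction of order $t^{1-\frac1{2s}}t^{-\frac{1+2s}{2s}}$) is $o\big([x-X_{\frac{\theta}{4}}(t)+Z(t)]^{-2s}\big)$. This uses $Z^{2s}[\cdot]^{-2-2s}\le Z^{-2}[\cdot]^{-2s}$ and $Z\to\infty$, which gives the factor $2$ for $t\ge\widetilde t_2$.

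\textbf{Main obstacle.} The delicate point is the third bullet: choosing $A$ so that the convex region $[A,\infty)$ stays inside $(X_2(t),\infty)$ while $x-A$ remains comparable to $x-X_{\frac{\theta}{4}}+Z$. This hinges on the explicit comparison between $Z(t)$ and the width of the middle zone, and I would verify the asymptotics of \eqref{sc-ls} carefully there. The local-term estimates across the junctions also need care, but they follow from \Cref{lemma-c11}: the constant $L$ is simply the larger of the two one-sided second derivatives.
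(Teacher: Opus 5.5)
Your proposal is correct and follows the paper's proof essentially step by step. In the first zone you use \Cref{lemma-c11} with \Cref{D-bounded}, in the second \Cref{gen-upper-bound-of-D} with $A=X_{\frac{\theta}{2}}(t)-1$, and in the third your anchor $A=X_{\frac{\theta}{4}}(t)-c_sZ(t)$ repackages the paper's choice $A=X_2(t)$ together with $X_{\frac{\theta}{4}}(t)-X_2(t)\ge c_sZ(t)$. One harmless slip: the correct bound is $Z^{2s}(t)\left[x-X_{\frac{\theta}{4}}(t)+Z(t)\right]^{-2-2s}\le Z^{2s-2}(t)\left[x-X_{\frac{\theta}{4}}(t)+Z(t)\right]^{-2s}$ (not $Z^{-2}(t)$), which still gives the required $o\big(\left[x-X_{\frac{\theta}{4}}(t)+Z(t)\right]^{-2s}\big)$ because $s<1$ and $Z(t)\to\infty$.
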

\begin{proof}[{\bf Proof of \Cref{prop:dec2}}]

We split space into sub-zones as follows.
\par
 \# {\bf Zone 1: $\left[X_1(t),X_{\frac{\theta}{2}}(t)\right]$.} Since $X_1(t)-X_2(t)\ge 1$ for $t$ large enough, say $t\ge \widetilde t_0$, it follows from \Cref{prop:pw2} that
 $\|\partial_{xx}w_l(t,\cdot)\|_{\mathscr{L}^{\infty}\left(\left[X_1-1,X_{\frac{\theta}{2}}\right]\right)} $ and $\|\partial_{xx}w_m(t,\cdot)\|_{\mathscr{L}^{\infty}\left(\left[X_{\frac{\theta}{2}},X_{\frac{\theta}{2}}+1\right]\right)}$ is uniformly bounded for $t\ge \widetilde t_0$. 
 It then follows from \Cref{lemma-c11} and \Cref{D-bounded} that there is $\mathcal{C}>0$ such that for all $t\ge \widetilde t_0$,
\begin{equation}\label{ub-s-zone1}
\D[\overline{u}]\le \mathcal{C}.
\end{equation}
\par
    \# {\bf Zone 2: $\left[X_{\frac{\theta}{2}}(t),X_{\frac{\theta}{4}}(t)\right]$.}
    In view of Proposition \ref{gen-upper-bound-of-D}, the convexity of $\overline{u}$ on $\left[X_{ \frac{\theta}{2}}(t)-1,\infty\right)$ gives 
 \begin{equation*}
\mathcal{D}[\overline{u}]\le \j_1 \mathcal{Q}[\overline{u}](t,x)+ \mathcal{C}_0\left[x-X_{\frac{\theta}{2}}(t)+1\right]^{-2s}.
 \end{equation*}
In view of Proposition \ref{prop:pw2}, we get
 \begin{equation*}
\|\partial_{xx}w_l(t,\cdot)\|_{\mathscr{L}^{\infty}\left(\left[X_{\frac{\theta}{2}}-2,X_{\frac{\theta}{2}}\right]\right)}\lesssim  t^{-2(\frac{1}{2s}-1)}, \quad\|\partial_{xx}w_m(t,\cdot)\|_{\mathscr{L}^{\infty}\left(\left[X_{\frac{\theta}{2}},X_{\frac{\theta}{2}}+2\right]\right)}\lesssim  t^{-(\frac{1}{2s}-1)}.
 \end{equation*}
Thus, for $X_{\frac{\theta}{2}}(t)\le x\le X_{\frac{\theta}{2}}(t)+1$, we have
\begin{align*}
\mathcal{Q}[\overline{u}](t,x)\le L_{X_{\frac{\theta}{2}}}=\max\left\{\|\partial_{xx}w_l(t,\cdot)\|_{\mathscr{L}^{\infty}\left(\left[X_{\frac{\theta}{2}}-2,X_{\frac{\theta}{2}}\right]\right)},\|\partial_{xx}w_m(t,\cdot)\|_{\mathscr{L}^{\infty}\left(\left[X_{\frac{\theta}{2}},X_{\frac{\theta}{2}}+2\right]\right)}\right\} \lesssim  t^{-(\frac{1}{2s}-1)}.
\end{align*}
It follows that for $X_{\frac{\theta}{2}}(t)\le x\le X_{\frac{\theta}{2}}(t)+1$ there is a time $\widetilde t_1\ge 1$ such that for all $t\ge \widetilde t_1$,
 \begin{equation*}
\mathcal{C}_0\left[x-X_{\frac{\theta}{2}}(t)+1\right]^{-2s}\ge 2^{-2s}\mathcal{C}_0 \ge \j_1\mathcal{Q}[\overline{u}](t,x).
 \end{equation*}
For  $X_{\frac{\theta}{2}}(t)+1\le x\le X_{\frac{\theta}{4}}(t)-1$, for $t\ge \widetilde t_1$, up to enlarging $\widetilde t_1$ if necessary,
 \begin{equation*}
\mathcal{Q}[\overline{u}](t,x)\le \sup_{y\in [x-1,x+1]}\partial_{xx}w_m(t,y)\le\frac{2s(1-2s)\mu}{t^{\frac{1}{2s}-1}\left[x-X_{\frac{\theta}{2}}(t)\right]^{2s+1}}\le \mathcal{C}_0\left[x-X_{\frac{\theta}{2}}(t)+1\right]^{-2s}.
 \end{equation*} 
Using \Cref{prop:pw2} again, we get
 \begin{equation*}
\|\partial_{xx}w_m(t,\cdot)\|_{\mathscr{L}^{\infty}\left(\left[X_{\frac{\theta}{4}}-2,X_{\frac{\theta}{4}}\right]\right)}\lesssim  t^{-\frac{1}{2s}+1}\left\{\left[\frac{\theta}{4\mu}t^{\frac{1}{2s}-1}+1\right]^\frac{1}{1-2s}-2\right\}^{-2s-1}\lesssim t^{-\frac{1}{s}},
 \end{equation*} 
and by Lemma \ref{lemma-Zt2}, for $t\ge \widetilde t_1$, up to enlarging $\widetilde t_1$ if necessary,
 \begin{equation*}
\|\partial_{xx}w_r(t,\cdot)\|_{\mathscr{L}^{\infty}\left(\left[X_{\frac{\theta}{4}},X_{\frac{\theta}{4}}+2\right]\right)}\lesssim  Z^{-2}(t)\lesssim  t^{-\frac{1}{s}}.
 \end{equation*}
For $X_{\frac{\theta}{4}}(t)-1\le x\le X_{\frac{\theta}{4}}(t)+1$, we have
\begin{align}\label{LXt2} 
\mathcal{Q}[\overline{u}](t,x)&\le L_{X_{\frac{\theta}{4}}}=\max\left\{\|\partial_{xx}w_m(t,\cdot)\|_{\mathscr{L}^{\infty}\left(\left[X_{\frac{\theta}{4}}-2,X_{\frac{\theta}{4}}\right]\right)},\|\partial_{xx}w_r(t,\cdot)\|_{\mathscr{L}^{\infty}\left(\left[X_{\frac{\theta}{4}},X_{\frac{\theta}{4}}+2\right]\right)}\right\} \nonumber\\
&\lesssim t^{-\frac{1}{s}}.
\end{align}
Since for $X_{\frac{\theta}{4}}(t)-1\le x\le X_{\frac{\theta}{4}}(t)$,
 \begin{equation*}
\mathcal{C}_0\left[x-X_{\frac{\theta}{2}}(t)+1\right]^{-2s}\ge \mathcal{C}_0\left[X_{\frac{\theta}{4}}(t)-X_{\frac{\theta}{2}}(t)+1\right]^{-2s}\gtrsim t^{-1},
 \end{equation*}
we get for $t\ge \widetilde t_1$, up to enlarging $\widetilde t_1$ if necessary,
 \begin{equation*}
\j_1\mathcal{Q}[\overline{u}](t,x)\le \mathcal{C}_0\left[x-X_{\frac{\theta}{2}}(t)+1\right]^{-2s}.
 \end{equation*}
Therefore, for all $t\ge \widetilde t_1$ and $X_{\frac{\theta}{2}}(t)\le x\le X_{\frac{\theta}{4}}(t)$, we achieve 
     \begin{equation*}
    \mathcal{D}[\overline{u}](t,x)\le 2\mathcal{C}_0\left[x-X_{\frac{\theta}{2}}(t)+1\right]^{-2s}.
     \end{equation*}
\medskip
\par
    \# {\bf Zone 3: $\left[X_{\frac{\theta}{4}}(t),\infty\right)$.}
    Since $\overline u(t,\cdot)$ is convex on $[X_2(t),\infty)$, Proposition
\ref{gen-upper-bound-of-D}, applied with
$A=X_2(t)$, yields, for $t$ sufficiently large and
$x\ge X_{\frac{\theta}{4}}(t)$,
\[
\mathcal D[\overline u](t,x)
\le\j_1\mathcal Q[\overline u](t,x)+\mathcal C_0[x-X_2(t)]^{-2s}.
\]
By the definitions of
$X_{\theta/4}(t)$ and $Z(t)$,
\[
\lim_{t\to\infty}\frac{X_{\frac{\theta}{4}}(t)-X_2(t)}{Z(t)}
=
\frac{1-2s}{2s}.
\]
Since $c_s=\min\left\{1,\frac{1-2s}{4s}\right\},$ there exists $\widetilde t_2>0$ such that
\[
X_{\frac{\theta}{4}}(t)-X_2(t)\ge c_s Z(t)
\]
for all $t\ge \widetilde t_2$. Consequently, for every
$x\ge X_{\frac{\theta}{4}}(t)$,
\[
x-X_2(t)=x-X_{\frac{\theta}{4}}(t)+X_{\frac{\theta}{4}}(t)-X_2(t)
\ge c_s\left[x-X_{\frac{\theta}{4}}(t)+Z(t)\right].
\]
It follows that
\[
\mathcal D[\overline u](t,x) \le\j_1\mathcal Q[\overline u](t,x) +c_s^{-2s}\mathcal{C}_0\left[x-X_{\frac{\theta}{4}}(t)+Z(t)\right]^{-2s}.
\]
\par
Let us estimate $\mathcal{Q}[\overline{u}]$ precisely. For $X_{\frac{\theta}{4}}(t)\le x\le X_{\frac{\theta}{4}}(t)+1$, 
 \begin{equation*}
\left[x-X_{\frac{\theta}{4}}(t)+Z(t)\right]^{-2s}\ge [Z(t)+1]^{-2s}\gtrsim t^{-1},
 \end{equation*}
 and thus, by \eqref{LXt2}, for $t\ge \widetilde t_2$, up to enlarging $\widetilde t_2$ if necessary, we have 
 \begin{equation*}
\j_1\mathcal{Q}[\overline{u}](t,x)\le c_s^{-2s}\mathcal{C}_0\left[x-X_{\frac{\theta}{4}}(t)+Z(t)\right]^{-2s}
 \end{equation*}
For $ x\ge X_{\frac{\theta}{4}}(t)+1$, one has for $t\ge \widetilde t_2$, up to enlarging $\widetilde t_2$ if necessary,
 \begin{equation*}
 \mathcal{J}_1\mathcal{Q}[\overline{u}](t,x)\le \mathcal{J}_1\sup_{y\in[x-1,x+1]}\partial_{xx} w_r(t,y) \le  \frac{s(2s+1)\mathcal{J}_1\theta Z^{2s}(t)}{2\left[x-X_{\frac{\theta}{4}}(t)+Z(t)-1\right]^{2s+2}}\le\frac{c_s^{-2s}\mathcal{C}_0}{\left[x-X_{\frac{\theta}{4}}(t)+Z(t)\right]^{2s}}.
 \end{equation*}
Therefore, for $t\ge \widetilde t_2$ and $x\ge X_{\frac{\theta}{4}}(t)$, we arrive at
$$\mathcal{D}[\overline{u}](t,x)\le 2c_s^{-2s}\mathcal{C}_0\left[x-X_{\frac{\theta}{4}}(t)+Z(t)\right]^{-2s}.$$ 
\end{proof}
Now, we are ready to prove that $\overline{u}$ is a supersolution to \eqref{ceq}.
Since the solution $u$ to \eqref{ceq} satisfies $0\le u\le 1$ on $\R^+\times \R$, the parabolic comparison principle in \cite{Brasseur2021,zhang2023optimal} implies that it is enough to verify that $\overline{u}(t,\cdot)$ is a supersolution to \eqref{ceq} on $[X_1(t),\infty)$. 
\begin{proposition}\label{prop:supersolsubcrit}
    For $t\ge \widetilde t:=\max\{t_0,\widetilde t_0,\widetilde t_1,\widetilde t_2\}$, if $$\mu \le \left(\frac{c_s^{2s}\zeta_0}{2\mathcal{C}_0}\right)^{\frac{1-2s}{2s}} \quad \text{and} \quad \gamma\ge \widetilde \gamma:=\max\left\{ \gamma_0,\frac{\theta^2 \rho}{2} + 2 \mathcal{C},4\mathcal{C}_0\right\},$$ 
    then the function $\overline{u}(t,\cdot)$ is a supersolution to \eqref{ceq} on $[X_1(t),+\infty)$.
\end{proposition}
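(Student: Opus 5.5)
The plan is to check the supersolution inequality
\[
\partial_t\overline u-\D[\overline u]-f(\overline u)\ge 0
\]
pointwise on $[X_1(t),\infty)$, for $t\ge\widetilde t$. Since $\overline u\ge1$ on $(-\infty,X_1(t)]$ and $u\le 1$, the comparison principle only needs this region. I would split $[X_1(t),\infty)$ into the same three zones as in \Cref{prop:dec2}. On each zone I compare the lower bound on $\partial_t\overline u$ from \Cref{prop:pw2} (valid for $t\ge t_0$ once $\gamma\ge\gamma_0$) with the upper bound on $\D[\overline u]$ from \Cref{prop:dec2}.

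\textbf{Zone 1: $x\in[X_1(t),X_{\frac\theta2}(t)]$.} Here $\overline u=w_l\in[\frac\theta2,1]$, so the reaction may be active. I use $f(w_l)\le \rho$, where $\rho$ is the Lipschitz/sup-type constant of $f$ implicit in $\widetilde\gamma$; it can be arranged via $f(w)\le \operatorname{Lip}(f)\,(w-\theta)_+$. Since $w_l\ge\frac\theta2$, \Cref{prop:pw2} gives
\[
\partial_t w_l\ge \frac{2\gamma}{\theta^2}w_l^2\ge\frac{\gamma}{2}.
\]
Combined with $\D[\overline u]\le\mathcal C$ from \Cref{prop:dec2}, it suffices that $\frac{\gamma}{2}\ge \mathcal C+\sup f$. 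Any remaining normalisation is absorbed by $\gamma\ge\frac{\theta^2\rho}{2}+2\mathcal C$, after writing $f(w_l)\le \rho\, w_l^2$-type bounds with $w_l^2\ge\theta^2/4$.

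\textbf{Zone 2: $x\in[X_{\frac\theta2}(t),X_{\frac\theta4}(t)]$.} Here $\overline u\le\frac\theta2<\theta$, so $f(\overline u)=0$. By \Cref{prop:pw2} and \Cref{prop:dec2},
\[
\partial_t\overline u-\D[\overline u]\ge\Big(\frac\gamma2-2\mathcal C_0\Big)\big[x-X_{\frac\theta2}(t)+1\big]^{-2s}\ge0 ,
\]
because $\gamma\ge4\mathcal C_0$.

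\textbf{Zone 3: $x\ge X_{\frac\theta4}(t)$.} Again $f(\overline u)=0$. By \Cref{prop:pw2} and \Cref{prop:dec2},
\[
\partial_t\overline u-\D[\overline u]\ge\big(\zeta_0\mu^{-\frac{2s}{1-2s}}-2c_s^{-2s}\mathcal C_0\big)\big[x-X_{\frac\theta4}(t)+Z(t)\big]^{-2s}.
\]
The bracket is nonnegative exactly when $\mu^{\frac{2s}{1-2s}}\le \frac{c_s^{2s}\zeta_0}{2\mathcal C_0}$, which is the assumed smallness of $\mu$.

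The point needing care is the order of choosing constants. $\mathcal C_0=(1-2s)^{-1}(2s)^{-1}\j_0\|\overline u\|_\infty=(1-2s)^{-1}(2s)^{-1}\j_0\cdot 2$ and $\zeta_0$ depend only on $s,\theta,\j_0$, so $\mu$ can be fixed first. The constant $\mathcal C$ of Zone 1 comes from \Cref{lemma-c11} and \Cref{D-bounded}, and may depend on $\mu$ through $\partial_{xx}w_l,\partial_{xx}w_m$ (whose $L^\infty$ norms are uniformly bounded for large $t$), but not on $\gamma$ in a way that grows with it. The bounds used are the $t$-decay of $\partial_{xx}w_l$ and the form of $\partial_{xx}w_m$, whose prefactors involve $\mu$ and $t$, not $\gamma$. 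Hence $\gamma\ge\widetilde\gamma$ can be chosen next.

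Finally, the times $t_0,\widetilde t_0,\widetilde t_1,\widetilde t_2$ are taken after $\mu,\gamma$ are fixed, and $\widetilde t$ is their maximum. Checking that none of these thresholds feeds back into $\mathcal C$ is the main (bookkeeping) obstacle. I expect it to go through, since all $t$-dependence in Zones 1–3 enters only through decaying error terms.
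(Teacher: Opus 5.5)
Your proposal follows the paper's proof essentially verbatim: the same three zones, the same pairing of the lower bounds on $\partial_t\overline u$ from \Cref{prop:pw2} with the upper bounds on $\mathcal{D}[\overline u]$ from \Cref{prop:dec2}, and the same conditions ($\gamma\ge 4\mathcal{C}_0$ in Zone 2, $\mu^{\frac{2s}{1-2s}}\le\frac{c_s^{2s}\zeta_0}{2\mathcal{C}_0}$ in Zone 3). Your remarks on fixing $\mu$ first, then $\gamma$, then the times are a useful addition that the paper leaves implicit.

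The one step to tighten is Zone 1. Bounding $\partial_t w_l\ge\gamma/2$ and $f(w_l)\le\sup f$ separately would need $\gamma\ge 2\mathcal{C}+2\sup f$, which does not follow from $\gamma\ge\widetilde\gamma$ in general. Instead, as the paper does and as your closing remark there suggests, take $\rho$ with $f(z)\le\rho z^2$ on $(0,1)$; such a $\rho$ exists because $f$ is Lipschitz and vanishes on $[0,\theta]$. Then keep the common factor: $\left(\frac{2\gamma}{\theta^2}-\rho\right)w_l^2-\mathcal{C}\ge\left(\frac{2\gamma}{\theta^2}-\rho\right)\frac{\theta^2}{4}-\mathcal{C}\ge 0$.
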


\begin{proof}[{\bf Proof of \Cref{prop:supersolsubcrit}}]
Let us show that $\partial_t \overline{u}-\D[\overline{u}]-f(\overline{u})\ge 0$ for all $t \geq \widetilde t$ and $x\ge X_1(t)$.
\par {\bf \# Zone 1: $\left[X_1(t), X_{ \frac{\theta}{2}}(t)\right]$.} 
By Hypothesis \ref{hypo_f}, there is $\rho >0$ such that $f(z)\le \rho z^2$ for all $z\in(0,1)$. 
Therefore, using \Cref{prop:dec2}, we get
 \begin{equation*}
\partial_t \overline{u}-\D[\overline{u}]-f(\overline{u})\ge  \frac{2\gamma }{\theta^2 } \overline{u}^2 -\mathcal{C}-\rho\overline{u}^2\ge \left(\frac{2\gamma}{\theta^2 } -\rho\right)\frac{\theta^2}{4} -\mathcal{C}\ge 0,
 \end{equation*}
as long as $\gamma \ge  \frac{\theta^2 \rho}{2} +2 \mathcal{C}$.
\par {\bf \# Zone 2: $\left[X_{ \frac{\theta}{2}}(t), X_{ \frac{\theta}{4}}(t)\right]$.} 
In view of Proposition \ref{prop:pw2} and Proposition \ref{prop:dec2}, for $t\ge \widetilde t_1$,
 \begin{equation*}
\partial_t \overline{u}-\D[\overline{u}]-f(\overline{u})\ge \frac{\frac{\gamma}{2} -2\mathcal{C}_0}{\left[x-X_{\frac{\theta}{2}}(t)+1\right]^{2s} }\ge 0,
 \end{equation*}
by taking $\gamma\ge 4\mathcal{C}_0$.

\par {\bf \# Zone 3: $\left[X_{ \frac{\theta}{4}}(t),\infty\right)$.} 
In view of Proposition \ref{prop:pw2} and Proposition \ref{prop:dec2}, for $t\ge \widetilde t_2$ we achieve
 \begin{equation*}
\begin{aligned}
\partial_t \overline{u}-\D[\overline{u}]-f(\overline{u})
& \ge  \frac{\zeta_0\mu^{-\frac{2s}{1-2s}}-2c_s^{-2s}\mathcal{C}_0}{\left[x-X_{\frac{\theta}{4}}(t)+Z(t)\right]^{2s}} \ge 0,
\end{aligned}
 \end{equation*}
as long as $\mu \le \left(\frac{\zeta_0 c_s^{2s}}{2\mathcal{C}_0}\right)^{\frac{1-2s}{2s}}$.
\end{proof}

\section{The case  \texorpdfstring{$s = \frac{1}{2}$}{s=1/2}}

\subsection{Construction of a subsolution}
Let $\Theta$ be any constant in $(\theta,\theta_0)$ and $\varepsilon\in \left(0,\min\{\Theta-\theta,\theta,\frac{1-\theta}{2}\}\right)$ be determined later. 
In view of \Cref{lem:smooth-truncation}, one may find $m\in \mathscr{C}^2([0,1];[0,\Theta])$ satisfying 
\par
 \begin{equation*}
m(y)=\begin{cases}
    y, &y\in \left[0,\theta+\varepsilon\right],\\[5pt]
    \Theta, &y\in\left[\theta+2\varepsilon,1\right],
    \end{cases}
 \end{equation*}
and 
\begin{equation}
    \label{smf2}
0\le m'\le M_\varepsilon \quad\text{and}\quad m''\ge -C_\varepsilon\quad \text{on }[0,1],
\end{equation}
for some $M_\varepsilon$ and $C_\varepsilon>0$ depending only on $\varepsilon$, $\theta$ and $\Theta$. For $\lambda\in(0,1)$, denote
 \begin{equation*}
X_\lambda(t)=
\begin{cases}
   \frac{rt\ln t}{1-t^{-\frac{3\lambda}{4(\theta-\varepsilon)}}} ,& \lambda\in\left(0,\theta-\varepsilon\right],\\
  \frac{rt\ln t}{1-t^{-\frac{3}{4}}}-t^\frac{1}{4}+t^{\sigma_\lambda}, & \lambda\in\left(\theta-\varepsilon,\theta+2\varepsilon\right],
\end{cases}
 \end{equation*}
where $r\in(0,1)$ is to be determined later, $\sigma_\lambda:=\ln \frac{\gamma e^\frac{1}{4}}{\lambda-\theta+\varepsilon+\gamma }$ and  $\gamma:=\frac{6\varepsilon}{e^\frac{1}{4}-1}<24\varepsilon$. Since $\lambda\mapsto \sigma_\lambda$ is strictly decreasing, using $\sigma_{\theta+2\varepsilon}=\ln \frac{2}{e^{-\frac{1}{4}}+1}>0$, we get 
 \begin{equation*}
0<\sigma_{\theta+2\varepsilon}<\sigma_{\theta+\varepsilon}<\sigma_{\theta-\varepsilon}=\frac{1}{4}.
 \end{equation*}
  Notice that $X_{\Theta}(t;\underline{u})=X_{\theta+2\varepsilon}(t)$ and $\lim_{t\to\infty}X_{\lambda_1}(t)-X_{\lambda_2}(t)=\infty$ for any $0<\lambda_1<\lambda_2\le\theta+2\varepsilon$. 
For $t\gg e$, we define
 \begin{equation*}
\underline u(t,x):=\begin{cases}
\Theta, & x\le X_{\theta+2\varepsilon}(t),\\
   m\circ  \phi_m(t,x), &X_{\theta+2\varepsilon}(t)\le x\le X_{\theta-\varepsilon }(t),\\
  \phi_r(t,x) & x\ge X_{\theta-\varepsilon }(t),
\end{cases} 
 \end{equation*}
where
 \begin{equation*} 
\phi_m(t,x):=\frac{\gamma e^\frac{1}{4}}{[x-X_{\theta-\varepsilon}(t)+t^\frac{1}{4}]^{\frac{1}{\ln t}}}-\gamma+\theta-\varepsilon,
 \end{equation*}
and
 \begin{equation*}
\phi_r(t,x)= -\frac{4(\theta-\varepsilon)}{3\ln t}\ln \left(1-\frac{rt\ln t}{x}\right).
 \end{equation*}
Observe that for $t$ large enough, say $t\ge t_0$ where $t_0\ge e$ depending on $r$,
\begin{equation}
    \label{x-t2e}
X_{\theta+2\varepsilon}-rt\ln t=\frac{rt^\frac{1}{4}\ln t}{1-t^{-\frac{3}{4}}}-t^{\frac{1}{4}}+t^{\sigma_{\theta+2\varepsilon}}\ge 0.
\end{equation}
\par
Before proving $\underline u$ is a subsolution to \eqref{ceq}, we list some useful properties of $\underline u$.

\begin{lemma}\label{lb-le2}
 There is $\widetilde t>t_0$ such that for all  $t>\widetilde t$, the function $\underline{u}$ is increasing in $t\in [\widetilde t,\infty)$, and decreasing in $x\in \R$ and convex in $x\in [X_{\theta+\varepsilon}(t),\infty)$. Moreover,  for all $t>0$, $\underline{u}(t,\cdot)\in  \mathscr{C}(\R)\cap\mathscr{C}^2(\R\setminus\{X_{\theta-\varepsilon}(t)\})$.
\end{lemma}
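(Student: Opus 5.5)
The plan is to verify each assertion of Lemma \ref{lb-le2} piece by piece from the explicit formulas, using that $m$ from \Cref{lem:smooth-truncation} is nondecreasing, equals the identity on $[0,\theta+\varepsilon]$ and is constant equal to $\Theta$ on $[\theta+2\varepsilon,1]$. Throughout, write
\[
a(t):=rt\ln t \quad\text{and}\quad y(t,x):=x-X_{\theta-\varepsilon}(t)+t^{\frac14},
\]
so that
\[
\phi_m=\gamma e^{\frac14}\exp\Big(-\frac{\ln y}{\ln t}\Big)-\gamma+\theta-\varepsilon
\quad\text{and}\quad
\phi_r=-\frac{4(\theta-\varepsilon)}{3\ln t}\ln\Big(1-\frac{a}{x}\right).
\]

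\emph{Consistency and continuity.} I first check that the pieces are well defined and glue continuously.
\begin{itemize}
\item On the middle zone, $y\in[t^{\sigma_{\theta+2\varepsilon}},t^{\frac14}]$, so $\ln y>0$ because $\sigma_{\theta+2\varepsilon}>0$.
\item $X_{\theta-\varepsilon}(t)>a(t)$, so $\phi_r$ is defined on $[X_{\theta-\varepsilon},\infty)$.
\item Substituting $y=t^{\sigma_\lambda}$ gives $t^{\sigma_\lambda/\ln t}=e^{\sigma_\lambda}$, hence $\phi_m=\lambda$. So $X_\lambda(t)$ is exactly the $\lambda$-level set of $\phi_m$, and the ordering $X_{\theta+2\varepsilon}<X_{\theta+\varepsilon}<X_{\theta-\varepsilon}$ follows from the monotonicity of $\lambda\mapsto\sigma_\lambda$.
\item At $x=X_{\theta-\varepsilon}$ we have $y=t^{\frac14}$, so $\phi_m=\theta-\varepsilon$. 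Also $1-a/X_{\theta-\varepsilon}=t^{-\frac34}$, so $\phi_r=\theta-\varepsilon$. This gives continuity at $X_{\theta-\varepsilon}$.
\item At $X_{\theta+2\varepsilon}$, the value is $m(\theta+2\varepsilon)=\Theta$. Since $m\in\mathscr C^2$ is constant on $[\theta+2\varepsilon,1]$, we get $m'=m''=0$ there, so $m\circ\phi_m$ joins the plateau in a $\mathscr C^2$ way.
\end{itemize}
Together with the smoothness of each piece, this yields $\underline u(t,\cdot)\in\mathscr C(\R)\cap\mathscr C^2(\R\setminus\{X_{\theta-\varepsilon}(t)\})$.

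\emph{Monotonicity and convexity in $x$.} Direct differentiation gives
\[
\partial_x\phi_m=-\frac{\gamma e^{\frac14}}{\ln t}\,y^{-\frac1{\ln t}-1}<0
\quad\text{and}\quad \partial_{xx}\phi_m>0,
\]
\[
\partial_x\phi_r=-\frac{4(\theta-\varepsilon)}{3\ln t}\,\frac{a}{x(x-a)}<0
\quad\text{and}\quad
\partial_{xx}\phi_r=\frac{4(\theta-\varepsilon)}{3\ln t}\,\frac{a(2x-a)}{x^2(x-a)^2}>0.
\]
Since $m'\ge0$, the function $\underline u$ is nonincreasing in $x$. On $[X_{\theta+\varepsilon},X_{\theta-\varepsilon}]$ we have $m=\mathrm{id}$, so $\underline u=\phi_m$ is convex there; $\phi_r$ is convex on $[X_{\theta-\varepsilon},\infty)$.

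\emph{The corner at $X_{\theta-\varepsilon}$ (main obstacle).} The derivative of $\underline u$ jumps at $X_{\theta-\varepsilon}$, so global convexity on $[X_{\theta+\varepsilon},\infty)$ requires the left derivative to be no larger than the right one. The left slope is
\[
-\frac{\gamma\, t^{-\frac14}}{\ln t}.
\]
For the right slope, use $X_{\theta-\varepsilon}-a=a\,t^{-\frac34}/(1-t^{-\frac34})$ and $X_{\theta-\varepsilon}\sim a$. This gives
\[
\partial_x\phi_r\big|_{x=X_{\theta-\varepsilon}}\sim-\frac{4(\theta-\varepsilon)}{3r}\,\frac{t^{-\frac14}}{(\ln t)^2}.
\]
This is smaller in absolute value by a factor of order $\ln t$, so for $t\ge\widetilde t$ (which depends on $r$ and $\varepsilon$) the corner is convex. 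This is the only place where a large time is needed for the $x$-properties, and the asymptotic comparison must be made carefully.

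\emph{Monotonicity in $t$.} On the plateau there is nothing to prove. For $\phi_r$, set
\[
h:=\frac{-\ln(1-a/x)}{\ln t},\qquad a'=r(\ln t+1).
\]
Then
\[
\partial_t h=\frac{a'}{(x-a)\ln t}-\frac{-\ln(1-a/x)}{t\ln^2t}.
\]
Using $-\ln(1-u)\le u/(1-u)$, the second term is at most $r/((x-a)\ln t)$, which is strictly less than the first term $r(\ln t+1)/((x-a)\ln t)$. Hence $\partial_t\phi_r>0$. For $\phi_m$,
\[
\partial_t\Big(-\frac{\ln y}{\ln t}\Big)=-\frac{\partial_t y}{y\ln t}+\frac{\ln y}{t\ln^2 t}.
\]
Here $\partial_t y=-X_{\theta-\varepsilon}'+\tfrac14t^{-\frac34}<0$ for $t$ large, since $X_{\theta-\varepsilon}'\gtrsim r\ln t$, and $\ln y>0$. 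So both terms are positive and $\partial_t\phi_m>0$; since $m'\ge0$, $m\circ\phi_m$ is nondecreasing in $t$. Finally, $\underline u$ is continuous and each piece is nondecreasing in $t$, with the junction points being level sets $\{\underline u=\lambda\}$ of a profile that decreases in $x$. Therefore $\underline u$ is nondecreasing in $t$ for $t\ge\widetilde t$.
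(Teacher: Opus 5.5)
Your proposal is correct and follows essentially the same route as the paper: you differentiate $\phi_m$ and $\phi_r$ directly. You get $\partial_t\phi_m>0$ from $X_{\theta-\varepsilon}'(t)>r\ln t>\frac14 t^{-\frac34}$ together with $\ln\bigl(x-X_{\theta-\varepsilon}(t)+t^{\frac14}\bigr)>0$, and $\partial_t\phi_r>0$ from $-\ln(1-u)\le u/(1-u)$. Convexity of the corner for large $t$ comes from comparing the one-sided slopes $-\gamma t^{-\frac14}/\ln t$ and $-\frac{4(\theta-\varepsilon)}{3r}(1-t^{-\frac34})^2t^{-\frac14}/\ln^2 t$ at $X_{\theta-\varepsilon}(t)$, exactly as in the paper.

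You also make explicit several points the paper leaves implicit. These are the continuity at the junctions, the $\mathscr{C}^2$ matching with the plateau (since $m'=m''=0$ at $\theta+2\varepsilon$), and the gluing in time. The time gluing works because each junction is a level set of a piece that increases in $t$ and decreases in $x$, so the junctions move to the right.
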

\begin{proof}[{\bf Proof of Lemma \ref{lb-le2}}]

Some direct calculations give 
\begin{equation}\label{dpm2}
\left\{\begin{aligned}
&\partial_t \phi_m(t,x)  = \frac{\phi_m(t,x) +\gamma-\theta+\varepsilon}{\ln t}\left[\frac{X_{\theta-\varepsilon}'(t)-\frac{t^{-\frac{3}{4}}}{4}}{x-X_{\theta-\varepsilon}(t)+t^\frac{1}{4}}+\frac{\ln[x-X_{\theta-\varepsilon}(t)+t^\frac{1}{4}]}{t\ln t}\right],
\\
&\partial_x \phi_m(t,x)  =-\frac{1}{\ln t}\frac{\phi_m(t,x) +\gamma -\theta+\varepsilon}{x-X_{\theta-\varepsilon}(t)+t^\frac{1}{4}}<0,\\
&\partial_{xx} \phi_m(t,x)  =\frac{ \frac{\gamma e^\frac{1}{4}}{\ln t}(\frac{1}{\ln t}+1)}{[x-X_{\theta-\varepsilon}(t)+t^\frac{1}{4}]^{\frac{1}{\ln t}+2}}\ge 0.
\end{aligned}\right.
\end{equation}
and
\begin{equation}
    \label{dwf}
\left\{ \begin{aligned} 
&\partial_t \phi_r(t,x) =\frac{4}{3}(\theta-\varepsilon) \left[\frac{\ln \left(1-\frac{rt\ln t}{x}\right)}{t\ln^2 t}+\frac{r\left(1+\frac{1}{\ln t}\right)}{x-rt\ln t}\right],\\
&\partial_x \phi_r(t,x) =-\frac{\frac{4}{3}(\theta-\varepsilon) rt}{x(x-rt\ln t)}<0,\\
&\partial_{xx}\phi_r(t,x) =\frac{\frac{4}{3}(\theta-\varepsilon)rt}{x(x-rt\ln t)}\left(\frac{1}{x}+\frac{1}{x-rt\ln t}\right)\ge 0.
\end{aligned}\right.
\end{equation}
Among the above derivatives, only the signs of $\partial_t \phi_m$ and $\partial_t \phi_r$ are nontrivial; the others are straightforward.
Since $X_{\theta-\varepsilon}(t)=\frac{rt\ln t}{1-t^{-\frac{3}{4}}}$, 
 \begin{equation*}
X_{\theta-\varepsilon}'(t)=\frac{r\ln t}{1-t^{-\frac{3}{4}}}+r\frac{1-t^{-\frac{3}{4}}\left(1+\frac{3}{4}\ln t\right)}{(1-t^{-\frac{3}{4}})^2}.
 \end{equation*}
Using $e^{\frac{3}{4}\ln t}>1+\frac{3}{4}\ln t$ for $t\ge e$, we get 
 \begin{equation*}
1-t^{-\frac{3}{4}}\left(1+\frac{3}{4}\ln t\right)>0.
 \end{equation*}
Thus, for $t$ large enough, say $t\ge \widetilde t>\max\left\{e,(4r)^{-\frac{4}{3}}\right\}$,
 \begin{equation*}
X_{\theta-\varepsilon}'(t)>r\ln t>\frac{t^{-\frac{3}{4}}}{4}.
 \end{equation*}
For all $t\ge\widetilde t$, it then follows from $X_{\theta+2\varepsilon}(t)-X_{\theta-\varepsilon}+t^\frac{1}{4}=t^{\sigma_{\theta+2\varepsilon}}>1$ that
 \begin{equation*}
\partial_t\phi_m(t,\cdot)> 0 \quad\text{on }\left[X_{\theta+2\varepsilon}(t),\infty\right).
 \end{equation*}
\par On the other hand, by the inequality  $\ln(1-y)\ge \frac{-y}{1-y}$ for $y\in(0,1)$, we get, for $x>rt\ln t$,
 \begin{equation*}
\ln \left(1-\frac{rt\ln t}{x}\right)\ge -\frac{rt\ln t}{x-rt\ln t}.
 \end{equation*}
and thus, for $t\ge e$,
 \begin{equation*}
\partial_t \phi_r(t,x) \ge\frac{\frac{4 }{3}(\theta-\varepsilon)r}{x-rt\ln t}>0.
 \end{equation*}
It follows that $\partial_t \phi_r(t,\cdot)>0$ on $[rt\ln t,\infty)$ for $t\ge e$. Therefore, for $t\ge \widetilde t$, the functions $\phi_m$ and $\phi_r$ are increasing in $t$, and decreasing and convex in $x$.
\par 
   Furthermore, 
    \begin{equation*}
\partial_x \phi_m(t,X_{\theta-\varepsilon}(t))=-\frac{\gamma }{t^\frac{1}{4}\ln t}.
 \end{equation*}
On the other hand,  
 \begin{equation*}
\partial_x \phi_r(t,X_{\theta-\varepsilon}(t))=-\frac{4(\theta-\varepsilon)(1-t^{-\frac{3}{4}})^2}{3rt^\frac{1}{4}\ln^2 t}.
 \end{equation*}
Then, for all $t\ge \widetilde t$, up to enlarging $\widetilde t$ if necessary, we have
 \begin{equation*}
\partial_x \phi_m(t,X_{\theta-\varepsilon}(t))\le \partial_x \phi_r(t,X_{\theta-\varepsilon}(t)). 
 \end{equation*}
Therefore, it follows from the definition of $\underline{u}(t,\cdot)$ and the convexities of $\phi_m(t,\cdot)$ and $\phi_r(t,\cdot)$ that for $t\ge \widetilde t$, the function $\underline{u}(t,\cdot)$ is convex on $[X_{\theta+\varepsilon}(t),\infty)$. Moreover, in view of the definition of $\underline{u}$ and using \eqref{dpm2} and \eqref{dwf}, for all $t>0$, $\underline{u}(t,\cdot)\in \mathscr{C}(\R)\cap\mathscr{C}^2(\R\setminus\{X_{\theta-\varepsilon}(t)\})$.
It completes the proof.
\end{proof}

\begin{proposition}
 \label{prop-phi}
 We have the following derivative estimates:
\begin{enumerate}[label=(\roman*)]
    \item We have
 \begin{equation*}
    \lim_{t\to\infty}\sup_{x\in [X_{\theta+2\varepsilon}(t),X_{\theta+\varepsilon}(t)+1]}|\partial_t \underline{u}(t,x)|= 0.
 \end{equation*}
\item For all $t\ge t_0$, 
 \begin{equation*}
\partial_t \underline{u}(t,x)\le
\begin{cases}
\frac{115\varepsilon}{x-X_{\theta+2\varepsilon}(t)}, &x\in [X_{\theta+\varepsilon}(t), X_{\theta-\varepsilon}(t)),\\[5pt]
\frac{\frac{8}{3}\theta r}{x-X_{\theta+2\varepsilon}(t)}, &x\in (X_{\theta-\varepsilon}(t),\infty).
\end{cases}
 \end{equation*}
\item We have
 \begin{equation*}
\lim_{t\to\infty}\sup_{x\in[X_{\theta+2\varepsilon}(t), X_{\theta-\varepsilon}(t))}[\partial_{xx}\underline{u}(t,x)]^-=0.
 \end{equation*}
\end{enumerate}
   
\end{proposition}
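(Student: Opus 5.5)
The proof is a direct computation from the derivative formulas \eqref{dpm2}, \eqref{dwf} and the explicit level-set positions $X_\lambda(t)$. I would mirror the proof of \Cref{prop-phi-subcritical} and treat the three items zone by zone.

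\textbf{Item (i).} Fix $x\in[X_{\theta+2\varepsilon}(t),X_{\theta+\varepsilon}(t)+1]$ and write $\partial_t\underline u=(m'\circ\phi_m)\,\partial_t\phi_m$, with $0\le m'\le M_\varepsilon$ by \eqref{smf2}. Set $y:=x-X_{\theta-\varepsilon}(t)+t^{1/4}$. On this interval
\[
y\ge t^{\sigma_{\theta+2\varepsilon}}\to\infty
\qquad\text{and}\qquad
y\le t^{\sigma_{\theta+\varepsilon}}+1\le 2t^{1/4}.
\]
The prefactor $\phi_m+\gamma-\theta+\varepsilon$ is bounded. Since $X'_{\theta-\varepsilon}(t)\lesssim r\ln t$, the first bracket term in \eqref{dpm2} is $O\bigl(\ln t\cdot t^{-\sigma_{\theta+2\varepsilon}}\bigr)$. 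The second bracket term is $O(1/t)$. After dividing by $\ln t$, the whole expression tends to $0$.

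\textbf{Item (ii), middle zone.} Take $x\in[X_{\theta+\varepsilon},X_{\theta-\varepsilon})$, where $m'\equiv1$, so $\partial_t\underline u=\partial_t\phi_m$. Here:
\begin{itemize}
\item $\phi_m+\gamma-\theta+\varepsilon=\gamma e^{1/4}y^{-1/\ln t}\le\gamma e^{1/4}$, using $y\ge1$;
\item $X'_{\theta-\varepsilon}(t)\le 2r\ln t$ for $t\ge t_0$ (possibly enlarging $t_0$);
\item $\ln y\le \tfrac14\ln t+1$, since $y\le t^{1/4}$ in this zone.
\end{itemize}
This gives
\[
\partial_t\phi_m\le \gamma e^{1/4}\Bigl(\frac{2r}{y}+\frac{1}{t\ln t}\Bigr).
\]
I then compare $y$ with $x-X_{\theta+2\varepsilon}(t)$. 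Since
\[
x-X_{\theta+2\varepsilon}=y-t^{\sigma_{\theta+2\varepsilon}}\le y,
\]
we get $\tfrac1y\le\frac{1}{x-X_{\theta+2\varepsilon}}$. The term $\frac{1}{t\ln t}$ is absorbed because $x-X_{\theta+2\varepsilon}\le t^{1/4}\ll t$. Using $\gamma<24\varepsilon$, $e^{1/4}<1.3$ and $r<1$, the total constant is at most $24\cdot1.3\cdot(2+\text{small})<115$. This fixes the constant $115$.

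\textbf{Item (ii), tail zone.} Take $x>X_{\theta-\varepsilon}$. In \eqref{dwf} the logarithmic term is negative, so
\[
\partial_t\phi_r\le\frac{\tfrac43(\theta-\varepsilon)\,r\,(1+1/\ln t)}{x-rt\ln t}\le \frac{\tfrac83\theta r}{x-rt\ln t}.
\]
Then \eqref{x-t2e} gives $x-rt\ln t\ge x-X_{\theta+2\varepsilon}(t)$, which yields the claimed bound.

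\textbf{Item (iii).} On $[X_{\theta+2\varepsilon},X_{\theta-\varepsilon})$ we have
\[
\partial_{xx}\underline u=(m''\circ\phi_m)(\partial_x\phi_m)^2+(m'\circ\phi_m)\,\partial_{xx}\phi_m,
\]
and the second term is nonnegative. Hence
\[
[\partial_{xx}\underline u]^-\le C_\varepsilon(\partial_x\phi_m)^2\lesssim \frac{1}{\ln^2 t\;y^2}\le \frac{1}{t^{2\sigma_{\theta+2\varepsilon}}\ln^2 t}\to0,
\]
using $\sigma_{\theta+2\varepsilon}>0$.

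\textbf{Main obstacle.} The delicate step is the middle-zone bound in (ii). One must control $X'_{\theta-\varepsilon}$, which grows like $r\ln t$, against the $\frac{1}{\ln t}$ prefactor, and then bound $\frac1y$ by $\frac{1}{x-X_{\theta+2\varepsilon}}$ with an explicit numerical constant. This requires tracking the constants $\gamma e^{1/4}$ and $2r$ carefully, together with the harmless $\frac{1}{t\ln t}$ remainder.
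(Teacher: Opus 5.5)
Your proposal is correct and follows the paper's proof essentially step by step. For (i) you use the same chain-rule bound with $0\le m'\le M_\varepsilon$ and the divergence $t^{\sigma_{\theta+2\varepsilon}}\to\infty$. For (ii) you use the same identity $x-X_{\theta+2\varepsilon}(t)=y-t^{\sigma_{\theta+2\varepsilon}}\le y$, absorb the $\frac{1}{t\ln t}$ remainder, and invoke \eqref{x-t2e} in the tail; for (iii) you use the same bound $[\partial_{xx}\underline u]^-\le C_\varepsilon(\partial_x\phi_m)^2$.

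The only difference is in the constants. The paper bounds $\phi_m+\gamma-\theta+\varepsilon\le 27\varepsilon$ and $X'_{\theta-\varepsilon}(t)<4r\ln t$, both valid for all $t\ge e$, so $108r\varepsilon+7\varepsilon\le 115\varepsilon$ holds from the stated $t_0$ onward. Your cruder constant $\gamma e^{1/4}<31.2\varepsilon$ forces you to use $X'_{\theta-\varepsilon}(t)\le 2r\ln t$, and hence to enlarge $t_0$. This is harmless for how the proposition is used later, but it proves a slightly weaker statement than the one written.
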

\begin{proof}[{\bf Proof of Proposition \ref{prop-phi}}]
\textbf{\# Proof of (i).}
 By \eqref{smf2}, for $x\in [X_{\theta+2\varepsilon}(t),X_{\theta-\varepsilon}(t)]$. a direct calculation shows that
\begin{equation*}
\partial_t \underline{u}=(m'\circ  \phi_m) \partial_t  \phi_m
\le (m'\circ  \phi_m) \frac{\phi_m+\gamma-\theta+\varepsilon}{\ln t}\left[\frac{X_{\theta-\varepsilon}'(t)-\frac{t^{-\frac{3}{4}}}{4}}{x-X_{\theta-\varepsilon}(t)+t^\frac{1}{4}}+\frac{\ln[x-X_{\theta-\varepsilon}(t)+t^\frac{1}{4}]}{t\ln t}\right].
\end{equation*}
Since $\phi_m\le \theta+2\varepsilon$ for $x\ge X_{\theta+2\varepsilon}(t)$, by $\gamma< 24\varepsilon$, we have 
 \begin{equation*}
\phi_m+\gamma-\theta+\varepsilon\le 27\varepsilon.
 \end{equation*}
 Using $X_{\theta-\varepsilon}'(t)\le \frac{r(\ln t+1)}{1-t^{-\frac{3}{4}}}\le \frac{2}{1-e^{-\frac{3}{4}}}r\ln t<4r\ln t$, 
 \begin{equation}
    \label{phit} 
 \partial_t \underline{u}(t,x)\le \frac{108r\varepsilon (m'\circ  \phi_m)}{x-X_{\theta-\varepsilon}(t)+t^\frac{1}{4}}+\frac{7\varepsilon (m'\circ  \phi_m)}{t\ln t}.
 \end{equation}
 Since $X_{\theta+2\varepsilon}(t)-X_{\theta-\varepsilon}(t)+t^\frac{1}{4}=t^{\sigma_{\theta+2\varepsilon}}\to \infty$ as $t\to \infty$, using $m'\le M_\varepsilon$ on $[0,1]$,
  \begin{equation*}
  \limsup_{t\to\infty}\sup_{x\in[X_{\theta+2\varepsilon}(t),X_{\theta+\varepsilon}(t)+1]}\partial_t \underline{u}(t,x)\le \limsup_{t\to\infty} \left( \frac{108r\varepsilon M_\varepsilon}{X_{\theta+2\varepsilon}(t)-X_{\theta-\varepsilon}(t)+t^\frac{1}{4}}+\frac{7\varepsilon M_\varepsilon}{t\ln t}\right)=0.
  \end{equation*}
 Thus, using $\partial_t\underline{u}\ge 0$ for $t\ge \widetilde t$, we get $\lim_{t\to\infty}\sup_{x\in [X_{\theta+2\varepsilon}(t),X_{\theta+\varepsilon}(t)+1]} |\partial_t \underline{u}(t,x)|=0$.
 \par \textbf{\# Proof of (ii).}
\textbf{\#\# Take $x\in [X_{\theta+\varepsilon}(t),X_{\theta-\varepsilon}(t))$.}
For $t\ge e$, we have
 \begin{equation*}
x-X_{\theta+2\varepsilon}(t)\le X_{\theta-\varepsilon}(t)-X_{\theta+2\varepsilon}(t)\le t^\frac{1}{4}\le t\ln t.
 \end{equation*}
Since $X_{\theta-\varepsilon}(t)-t^\frac{1}{4}-X_{\theta+2\varepsilon}(t)=-t^{\sigma_{\theta+2\varepsilon}}<0$,
 \begin{equation*}
x-X_{\theta-\varepsilon}(t)+t^\frac{1}{4}\ge x-X_{\theta+2\varepsilon}(t).
 \end{equation*}
Therefore, since $r<1$ and $m'\equiv1$ on $(0,\theta+\varepsilon)$, using \eqref{phit},
 \begin{equation*}
\partial_t \underline{u}(t,x)\le \frac{ 108 r \varepsilon }{x-X_{\theta+2\varepsilon}(t)}+\frac{7\varepsilon}{x-X_{\theta+2\varepsilon}(t)}\le \frac{115 \varepsilon }{x-X_{\theta+2\varepsilon}(t)}.
 \end{equation*}
\par \textbf{\#\# Take $x\in (X_{\theta-\varepsilon}(t),\infty)$.}
Since $x\ge X_{\theta-\varepsilon}(t)=\frac{rt\ln t}{1-t^{-\frac{3}{4}}}> rt\ln t$, it follows from \eqref{dwf} that 
 \begin{equation*}
\partial_t \phi_r(t,x) \le  \frac{\frac{8}{3}\theta r}{x-rt\ln t}.
 \end{equation*}
By \eqref{x-t2e}, for all $t\ge t_0$, using the definition of $\underline{u}$, we get
 \begin{equation*}
\partial_t \underline{u}(t,x) \le  \frac{\frac{8}{3}\theta r}{x-X_{\theta+2\varepsilon}(t)}.
 \end{equation*}
 Combining the two cases above, we obtain (ii).
\par \textbf{\# Proof of (iii).}
When $x$ lies in $[X_{\theta+2\varepsilon}(t), X_{\theta-\varepsilon}(t))$, since
 \begin{equation*}
\partial_x\underline u=(m'\circ \phi_m) \partial_x \phi_m\quad\text{and}\quad
\partial_{xx}\underline u=(m''\circ  \phi_m) (\partial_x \phi_m)^2+(m'\circ  \phi_m) \partial_{xx}  \phi_m,
 \end{equation*}
by the convexity of $\phi_m$ and \eqref{smf2}, we arrive at
 \begin{equation*}
\partial_{xx}\underline{u}\ge  -C_\varepsilon (\partial_x  \phi_m)^2.
 \end{equation*}
It follows from
 \begin{equation*}
|\partial_x \phi_m(t,x) | \le \frac{1}{\ln t}\frac{ \gamma+3\varepsilon}{X_{\theta+2\varepsilon}(t)-X_{\theta-\varepsilon}(t)+t^\frac{1}{4}}=\frac{ \gamma+3\varepsilon}{t^{\sigma_{\theta+2\varepsilon}}\ln t}
 \end{equation*}
that 
 \begin{equation*}
\lim_{t\to\infty}\sup_{x\in[X_{\theta+2\varepsilon}(t), X_{\theta-\varepsilon}(t))}|\partial_x \phi_m(t,x)|=0.
 \end{equation*}
Thus, using \Cref{lb-le2}, we get 
 \begin{equation*}
\lim_{t\to\infty}\sup_{x\in[X_{\theta+2\varepsilon}(t), X_{\theta-\varepsilon}(t))}[\partial_{xx}\underline{u}(t,x)]^-=0.
 \end{equation*}
\end{proof}

Equipped with Proposition \ref{prop-D[phi]}, we have the following proposition.
\begin{proposition}\label{prop-D[phi]-s=1/2}
Let $J$ satisfy \Cref{kernel_hypothesis}. Then the following estimates hold:
 \begin{itemize}
\item When $x\in(-\infty,X_{\theta+\varepsilon}(t)+1]$, for any $C>0$, there is a time $\underline t_1$ such that, for $t\ge \underline t_1$, we have
     \begin{equation*}
    \mathcal{D}[\underline u](t,x)\ge -C.
     \end{equation*}
\item When $x\in [X_{\theta+\varepsilon}(t)+1,+\infty)$, if $\varepsilon\le \varepsilon_0:=\frac{\Theta-\theta}{104\j_0^2+2}$, then there exists a time $\underline t_2$ such that, for all $t\ge \underline t_2$,  we have
     \begin{equation*}
    \mathcal{D}[\underline{u}](t,x)\ge \frac{\Theta-\theta}{2\j_0(x-X_{\theta+2\varepsilon}(t))}.
     \end{equation*}
\end{itemize}
\end{proposition}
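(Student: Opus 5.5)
The plan is to follow the scheme of \Cref{prop-D[phi]-s<1/2}. We apply \Cref{prop-D[phi]} with $\xi_1=X_{\theta+2\varepsilon}(t)$, $\xi_2=X_{\theta+\varepsilon}(t)$ and $\Omega=\{X_{\theta-\varepsilon}(t)\}$. This is legitimate for $t$ large: $X_{\theta+\varepsilon}(t)-X_{\theta+2\varepsilon}(t)=t^{\sigma_{\theta+\varepsilon}}-t^{\sigma_{\theta+2\varepsilon}}\to\infty$, hence is at least $R_0$, and similarly $X_{\theta-\varepsilon}(t)-X_{\theta+\varepsilon}(t)\ge 1$. By \Cref{lb-le2}, $\underline u(t,\cdot)$ is decreasing, equal to $\Theta$ on $(-\infty,\xi_1]$, convex on $[\xi_2,\infty)$ and $\mathscr{C}^2$ off $\Omega$.

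\textbf{First region, $x\le X_{\theta+\varepsilon}(t)+1$.} Given $C$, choose $B_1$ with $\int_{B_1}^\infty J\le C/2$. Part I of \Cref{prop-D[phi]} then gives
\[
\D[\underline u]\ge -\tfrac C2-\Big(\j_1+\int_1^{B_1}z^2J\Big)\sup[\partial_{xx}\underline u]^-.
\]
By \Cref{prop-phi}(iii) the last supremum tends to $0$, since the window $[x-B_1,x+B_1]\setminus\Omega$ lies in $(-\infty,X_{\theta+2\varepsilon}]\cup[X_{\theta+2\varepsilon},X_{\theta-\varepsilon})\cup$ a convex region. This yields $\underline t_1$.

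\textbf{Second region, $x\ge X_{\theta+\varepsilon}(t)+1$.} Part II gives
\[
\D[\underline u]\ge -|\partial_x\underline u|\int_1^{B_2}zJ-\frac{\j_0\underline u}{B_2}+\frac{\Theta-\underline u}{\j_0(x-\xi_1)}.
\]
Since $\underline u\le\theta+\varepsilon$ here, the last term is at least $\frac{\Theta-\theta-\varepsilon}{\j_0(x-X_{\theta+2\varepsilon})}$. For the first two terms we bound $\int_1^{B_2}zJ\le \j_0\ln B_2$. In contrast with $s<1/2$, optimizing $B_2$ produces a logarithm, so we choose $B_2:=\underline u/|\partial_x\underline u|$ (which is large for $t$ large). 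This gives the error
\[
\j_0|\partial_x\underline u|\bigl(1+\ln(\underline u/|\partial_x\underline u|)\bigr).
\]
We then show that this error is at most $\frac{\Theta-\theta}{2\j_0}\cdot\frac{1}{x-X_{\theta+2\varepsilon}}$ minus the $\varepsilon$ loss, using the explicit profiles.

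\emph{Middle piece.} For $x\in[X_{\theta+\varepsilon}+1,X_{\theta-\varepsilon}]$ we have
\[
|\partial_x\phi_m|=\frac{\phi_m+\gamma-\theta+\varepsilon}{\ln t\,(x-X_{\theta-\varepsilon}+t^{1/4})}\le\frac{27\varepsilon}{\ln t\,(x-X_{\theta+2\varepsilon})},
\]
and $\ln(\underline u/|\partial_x\underline u|)\lesssim \ln t$ because $x-X_{\theta+2\varepsilon}\le t^{1/4}$. The $\ln t$ cancels, so the error is at most $27\varepsilon\j_0(1+o(1))/(x-X_{\theta+2\varepsilon})$. This is small compared to $\Theta-\theta$ once $\varepsilon\le\varepsilon_0$; the constant $104\j_0^2$ comes from $4\cdot 26$-type bookkeeping.

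\emph{Tail.} For $x\ge X_{\theta-\varepsilon}$, \eqref{dwf} gives $|\partial_x\phi_r|=\frac{4(\theta-\varepsilon)rt}{3x(x-rt\ln t)}$. Here $x-X_{\theta+2\varepsilon}\le x-rt\ln t$ by \eqref{x-t2e}, and $\ln(\underline u/|\partial_x\underline u|)\lesssim\ln x$. We need
\[
\frac{t\ln x}{x}\lesssim \varepsilon\quad\text{uniformly for } x\ge rt\ln t.
\]
This is the main obstacle. The quantity $t\ln x/x$ is decreasing in $x$, and at $x\sim t\ln t$ it is of order $1/r$, not small. So we expect instead that $\phi_r$ itself is small where this is delicate: $\ln(\phi_r/|\partial_x\phi_r|)$ must be estimated more carefully using $\phi_r\approx\frac{4(\theta-\varepsilon)rt}{3x}$ for $x\gg t\ln t$. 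This gives $\phi_r/|\partial_x\phi_r|\approx x$, and the error is then about $\j_0\phi_r\ln x/x$. Near $X_{\theta-\varepsilon}$, where $\phi_r\approx\theta-\varepsilon$, we have
\[
\ln x\approx\ln t,\qquad\ln(\underline u/|\partial_x\underline u|)\approx \tfrac14\ln t,
\]
using the explicit derivative at the junction, so the error is $\lesssim \frac{(\theta-\varepsilon)}{r}\cdot\frac{1}{\ln t}\cdot\frac{\ln t}{x}$. This is then controlled by choosing $r$ small relative to $\Theta-\theta$ and $\j_0$. (Any residual requirement on $r$ would be imposed later, in the subsolution proposition, alongside $\frac83\theta r$.) Collecting the estimates gives $\underline t_2$ and the bound $\frac{\Theta-\theta}{2\j_0(x-X_{\theta+2\varepsilon})}$.
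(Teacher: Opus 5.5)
Your first region and your middle piece $[X_{\theta+\varepsilon}(t)+1,X_{\theta-\varepsilon}(t)]$ are fine and essentially follow the paper. The paper takes $B_2=\frac{-\underline u/\partial_x\underline u}{\ln(-\underline u/\partial_x\underline u)}$ rather than your optimiser $\underline u/|\partial_x\underline u|$, but in both cases $\underline u/|\partial_x\underline u|<t$ makes the $\ln t$ cancel.

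The gap is the tail $x\ge X_{\theta-\varepsilon}(t)$. There, part II of \Cref{prop-D[phi]} is genuinely too weak, and taking $r$ small does not help. Write $d:=x-rt\ln t$, so that $\phi_r=\frac{4(\theta-\varepsilon)}{3\ln t}\ln\frac{x}{d}$ and $|\partial_x\phi_r|=\frac{4(\theta-\varepsilon)rt}{3xd}$. Take $x=(1+a)rt\ln t$ with $a\in(0,1)$ fixed. Then $\phi_r\to0$, $x-X_{\theta+2\varepsilon}(t)=d\,(1+o(1))$ and $\ln(\phi_r/|\partial_x\phi_r|)=\ln t\,(1+o(1))$. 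So your error, which is already optimal in $B_2$, is
\[
\j_0|\partial_x\phi_r|\Bigl(1+\ln\frac{\phi_r}{|\partial_x\phi_r|}\Bigr)=\frac{4\j_0(\theta-\varepsilon)}{3(1+a)}\,\frac{1+o(1)}{x-X_{\theta+2\varepsilon}(t)},
\]
while the gain is only $\frac{\Theta-o(1)}{\j_0(x-X_{\theta+2\varepsilon}(t))}$. Both terms have the same order and $r$ has cancelled. Reaching $\frac{\Theta-\theta}{2\j_0(x-X_{\theta+2\varepsilon})}$ would require $\frac{8\j_0^2(\theta-\varepsilon)}{3(1+a)}\le\Theta+\theta$. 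The hypotheses do not give this: it fails for small $a$ when $\Theta$ is close to $\theta$ (even with $\j_0=1$), or when $\j_0$ is large.

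Your check covers only two places. One is the junction, where $x-X_{\theta+2\varepsilon}\approx t^{1/4}$ is smaller than $x-rt\ln t\approx rt^{1/4}\ln t$ by a factor $\ln t$; that is why it works there. The other is $x\gg t\ln t$. The whole range $t^{1/4}\ln t\ll x-rt\ln t\lesssim t\ln t$ is missed. Note also that the proposition imposes no condition on $r$, so deferring one would change the statement.

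The loss comes from part II keeping, on the left, only $\int_{-\infty}^{\xi_1-x}$. It discards $\int_1^{x-\xi_1}[\underline u(x-z)-\underline u(x)]J(z)\dd z\ge0$, which for this logarithmic profile is as large as the right-hand loss it should offset.

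The missing idea is to symmetrise:
\[
\D[\underline u](t,x)=\int_0^\infty[\underline u(t,x-z)+\underline u(t,x+z)-2\underline u(t,x)]J(z)\dd z .
\]
By convexity the integrand is nonnegative for $z\le x-X_{\theta+\varepsilon}(t)$, so only $z\ge x-X_{\theta+\varepsilon}(t)$ matters. Split the rest at $x-X_{\theta+2\varepsilon}(t)$ and $K(x-X_{\theta+2\varepsilon}(t))$:
\begin{itemize}
\item The thin strip costs only $o(1)/(x-X_{\theta+2\varepsilon})$, since $t^{\sigma_{\theta+\varepsilon}}\ll t^{1/4}$.
\item On the middle range, use the logarithmic flatness $\underline u(t,x)-\underline u(t,x+z)\le\frac{4(\theta-\varepsilon)}{3\ln t}\ln(1+K)\to0$ (via \eqref{x-t2e}). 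This gives an integrand $\ge\frac78(\Theta-\theta)$, hence a contribution $\ge\frac{3(\Theta-\theta)}{4\j_0(x-X_{\theta+2\varepsilon})}$ for $K\ge7$.
\item The far range is $\ge-\frac{2\j_0}{K(x-X_{\theta+2\varepsilon})}$, which is absorbed by taking $K\ge16\j_0^2/(\Theta-\theta)$.
\end{itemize}
This closes the tail with no condition on $r$ or $\varepsilon$.
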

\begin{proof}[{\bf Proof of Proposition \ref{prop-D[phi]-s=1/2}}]
Since $\sigma_{\theta-\varepsilon}>\sigma_{\theta+\varepsilon}>\sigma_{\theta+2\varepsilon}$, we have
$X_{\theta+\varepsilon}(t)-X_{\theta+2\varepsilon}(t)=t^{\sigma_{\theta+\varepsilon}}-t^{\sigma_{\theta+2\varepsilon}}\ge R_0$ and $X_{\theta-\varepsilon}(t)-X_{\theta+\varepsilon}(t)=t^{\sigma_{\theta-\varepsilon}}-t^{\sigma_{\theta+\varepsilon}}\ge 1$ for $t$ large enough, say $t\ge \underline{t}_0$. In view of the definition of $\underline u$, by taking $\xi_1=X_{\theta+2\varepsilon}(t)$, $\xi_2=X_{\theta+\varepsilon}(t)$ and $\Omega=\{X_{\theta-\varepsilon}(t)\}$, we can apply Proposition \ref{prop-D[phi]} to the function $\underline u(t,\cdot)$ for $s\in\left(0,\frac{1}{2}\right]$. 

{\bf \# Take $x\in(-\infty,X_{\theta+\varepsilon}(t)+1]$.}
Since $J\in L^1([1,+\infty))$ and $\underline u\le \Theta<1$, by choosing $B_1$ large enough, one may get 
 \begin{equation*}
\underline u(t,x)\int_{B_1}^{+\infty}J(z) \dd z \le \frac{C}{2}.
 \end{equation*}
 There is a time $\underline{t}_1>\underline{t}_0$ such that for $t\ge \underline t_1$ we have
   \begin{equation*}
\D[\underline u](t,x)\ge  -C.
 \end{equation*}
\par {\bf \# Take $x\in[X_{\theta+\varepsilon}(t)+1,X_{\theta-\varepsilon}(t)]$.}
In view of the definition of $\underline u$, we write
\begin{equation}
    \label{u/pu}
-\frac{\underline{u}}{\partial_x \underline{u}}=-\frac{\phi_m}{\partial_x \phi_m}=\frac{\phi_m}{\phi_m+\gamma-\theta+\varepsilon}[x-X_{\theta-\varepsilon}(t)+t^\frac{1}{4}]\ln t.
\end{equation}
Since $\theta-\varepsilon\le \phi_m\le \theta+2\varepsilon$ and $X_{\theta+2\varepsilon}(t)-X_{\theta-\varepsilon}(t)+t^\frac{1}{4}=t^{\sigma_{\theta+2\varepsilon}}$,  there is a time $\underline t_2'>\underline{t}_0$ such that for all $t\ge \underline t_2'$, we have
 \begin{equation*}
-\frac{\underline{u}}{\partial_x \underline{u}}\ge \frac{\theta-\varepsilon}{\gamma+3\varepsilon} t^{\sigma_{\theta+2\varepsilon}}\ln t\ge e.
 \end{equation*}
Since $\frac{y}{\ln y}\ge e$ for all $y\in(1,+\infty)$,
we have for all $t\ge \underline t_2'$,
 \begin{equation*}
\frac{-\frac{\underline u}{\partial_x\underline u}}{\ln\left(-\frac{\underline u}{\partial_x \underline u}\right)}\ge e>1.
 \end{equation*}
Since $\underline u(t,\cdot)$ is $\mathscr{C}^2$ on $[X_{\theta+\varepsilon}(t)+1,X_{\theta-\varepsilon}(t))$, the subdifferential $\partial \underline u(t,\cdot)$ reduces to the singleton $\{\partial_x \underline u(t,\cdot)\}$. 
Thus, for all $t\ge \underline t_2'$, by Proposition \ref{prop-D[phi]} and taking $B_2=\frac{-\frac{\underline u}{\partial_x\underline u}}{\ln\left(-\frac{\underline u}{\partial_x \underline u}\right)}>1$,  we obtain
\begin{equation}
   \label{pephi} 
\begin{aligned}  
\mathcal{D}[\underline u](t,x)&\ge \j_0\partial_x\underline u\ln B_2-\frac{\j_0\underline u}{B_2}+ \frac{\j_0^{-1}[\Theta-\underline u]}{x-X_{\theta+2\varepsilon}(t)}\\
&\ge 2\j_0 \partial_x\underline u\ln\left(-\frac{\underline u}{\partial_x\underline u}\right)+ \frac{\j_0^{-1}[\Theta-\underline u]}{x-X_{\theta+2\varepsilon}(t)}.
\end{aligned}
\end{equation}
By \eqref{u/pu}, it follows from $\theta-\varepsilon\le\phi_m\le \theta+\varepsilon$ and $\ln t<t$ for all $t\ge e$ that for all $t\ge \underline t_2'$, up to enlarging $\underline t_2'$ if necessary,
 \begin{equation*}
-\frac{\underline{u}}{\partial_x \underline{u}}\le\frac{\phi_m [x-X_{\theta-\varepsilon}(t)+t^\frac{1}{4}]\ln t}{\phi_m+\gamma-\theta+\varepsilon}\le \frac{(e^{\frac{1}{4}}-1)(\theta+\varepsilon)}{6\varepsilon}t^\frac{1}{4}\ln t < \frac{\theta+\varepsilon}{6\varepsilon}t^\frac{3}{4}<t.
 \end{equation*}
Thus,
 \begin{equation*}
\partial_x\underline{u}\ln\left(-\frac{\underline{u}}{\partial_x\underline{u}}\right)\ge -\frac{\gamma+2\varepsilon}{x-X_{\theta-\varepsilon}(t)+t^\frac{1}{4}}\ge -\frac{26\varepsilon}{x-X_{\theta+2\varepsilon}(t)}.
 \end{equation*}
It then follows from Proposition \ref{prop-D[phi]} that
 \begin{equation*} 
(x-X_{\theta+2\varepsilon}(t))\D[\underline{u}](t,x)
\ge -52\j_0\varepsilon+\j_0^{-1}[\Theta-\theta-\varepsilon]\ge   \frac{\Theta-\theta}{2\j_0},
 \end{equation*}
as long as $\varepsilon\le \frac{\Theta-\theta}{104\j_0^2+2}$. As a result, for all $(t,x)\in [\underline{t}_2',\infty)\times[X_{\theta+\varepsilon}(t)+1,X_{\theta-\varepsilon}(t)]$, we achieve 
 \begin{equation*}
\D[\underline u](t,x)\ge \frac{\Theta-\theta}{2\j_0(x-X_{\theta+2\varepsilon}(t))}.
 \end{equation*}

\par {\bf \# Take $x\in[X_{\theta-\varepsilon}(t),\infty)$.} 
The symmetry of $J$ gives  
 \begin{equation*}
\mathcal{D}[\underline{u}](t,x)=\int_0^{\infty}[\underline{u}(t,x-z)+\underline{u}(t,x+z)-2\underline{u}(t,x)]J(z) \dd z .
 \end{equation*}
The convexity $\underline{u}(t,\cdot)$ on $[X_{\theta+\varepsilon}(t),\infty)$ implies $\underline{u}(t,x-z)+\underline{u}(t,x+z)-2\underline{u}(t,x)\ge 0$ for $z\in[0,x-X_{\theta+\varepsilon}(t)]$, and thus, 
 \begin{equation*}
\begin{aligned} 
\mathcal{D}[\underline{u}](t,x)&\ge \int_{x-X_{\theta+\varepsilon}(t)}^{\infty}[\underline{u}(t,x-z)+\underline{u}(t,x+z)-2\underline{u}(t,x)]J(z) \dd z \\
&=\left(\int_{x-X_{\theta+\varepsilon}(t)}^{x-X_{\theta+2\varepsilon}(t)}+\int_{x-X_{\theta+2\varepsilon}(t)}^{K(x-X_{\theta+2\varepsilon}(t))}+\int_{K(x-X_{\theta+2\varepsilon}(t))}^{\infty}\right)[\underline{u}(t,x-z)+\underline{u}(t,x+z)-2\underline{u}(t,x)]J(z) \dd z \\
&:=I_1+I_2+I_3.
\end{aligned}
 \end{equation*}
where $K>1$ is to be determined later.
For $I_1$, since $0<\underline{u}(t,\cdot)< 1$ on $\R$, 
 \begin{equation*}
I_1\ge -2\j_0\int_{x-X_{\theta+\varepsilon}(t)}^{x-X_{\theta+2\varepsilon}(t)}z^{-2} \dd z =-2\j_0\frac{X_{\theta+\varepsilon}(t)-X_{\theta+2\varepsilon}(t)}{(x-X_{\theta+2\varepsilon}(t))(x-X_{\theta+\varepsilon}(t))}.
 \end{equation*}
By $0<\sigma_{\theta+2\varepsilon}<\sigma_{\theta+\varepsilon}<\frac{1}{4}$, there is a time $\underline{t}_2''> \underline{t}_0$ such that for all $t\ge \underline{t}_2''$,
 \begin{equation*}
\frac{X_{\theta+\varepsilon}(t)-X_{\theta+2\varepsilon}(t)}{x-X_{\theta+\varepsilon}(t)}\le \frac{X_{\theta+\varepsilon}(t)-X_{\theta+2\varepsilon}(t)}{X_{\theta-\varepsilon}(t)-X_{\theta+\varepsilon}(t)}=\frac{t^{\sigma_{\theta+\varepsilon}}-t^{\sigma_{\theta+2\varepsilon}}}{t^\frac{1}{4}-t^{\sigma_{\theta+\varepsilon}}}\le \frac{\Theta-\theta}{16\j_0^2}.
 \end{equation*}
As a result, for all $t\ge \underline{t}_2''$, 
 \begin{equation*}
I_1\ge -\frac{\Theta-\theta}{8\j_0(x-X_{\theta+2\varepsilon}(t))} .
 \end{equation*}
\par
For $I_2$, by \eqref{x-t2e}, we get
 \begin{equation*}
\underline{u}(t,x)-\underline{u}(t,x+z)\le\frac{\frac{4}{3}(\theta-\varepsilon)}{\ln t}\ln \left(1+\frac{z}{x-r t\ln t}\right)\le \frac{\frac{4}{3}(\theta-\varepsilon)\ln (1+K)}{\ln t}\to 0\quad\text{as }t\to \infty,
 \end{equation*}
and thus, for $t\ge \underline{t}_2''$, up to enlarging $\underline{t}_2''$ if necessary,
 \begin{equation*}
\underline{u}(t,x-z)+\underline{u}(t,x+z)-2\underline{u}(t,x) \ge\Theta -\theta+\underline{u}(t,x+z)-\underline{u}(t,x)\ge \frac{7(\Theta-\theta)}{8}. 
 \end{equation*}
It follows from $x-X_{\theta+2\varepsilon}(t)\ge X_{\theta-\varepsilon}(t)-X_{\theta+2\varepsilon}(t)\ge R_0$ for $t\ge \underline{t}_2''$, up to enlarging $\underline{t}_2''$ if necessary, that by taking $K\ge 7$,
 \begin{equation*}
I_2\ge \frac{7(\Theta-\theta)}{8\j_0}\int_{x-X_{\theta+2\varepsilon}(t)}^{K(x-X_{\theta+2\varepsilon}(t))} z^{-2} \dd z =\frac{7(\Theta-\theta)}{8\j_0(x-X_{\theta+2\varepsilon}(t))}\left(1-\frac{1}{K}\right)\ge\frac{3(\Theta-\theta)}{4\j_0(x-X_{\theta+2\varepsilon}(t))}.
 \end{equation*}
\par 
For $I_3$, using $0<\underline{u}(t,\cdot)< 1$ on $\R$ and taking $K\ge \max\left\{\frac{16\j_0^2}{\Theta-\theta},7\right\}$, we get
 \begin{equation*}
I_3\ge -2\j_0\int_{K(x-X_{\theta+2\varepsilon}(t))}^{\infty} z^{-2} \dd z =\frac{-2\j_0}{K(x-X_{\theta+2\varepsilon}(t))}\ge-\frac{\Theta-\theta}{8\j_0(x-X_{\theta+2\varepsilon}(t))} .
 \end{equation*}
As a result, for all $(t,x)\in [\underline t_2'',\infty)\times[X_{\theta-\varepsilon}(t),\infty)$ we achieve 
 \begin{equation*}
\D[\underline{u}](t,x)\ge \frac{\Theta-\theta}{2\j_0(x-X_{\theta+2\varepsilon}(t))}.
 \end{equation*}
\par
Therefore, for any $\varepsilon\le \frac{\Theta-\theta}{104\j_0^2+2}$,  by taking $\underline t_2=\max\{\underline t_2',\underline t_2''\}$, for all $(t,x)\in [\underline t_2,\infty)\times[X_{\theta+\varepsilon}(t)+1,+\infty)$  we have
     \begin{equation*}
    \mathcal{D}[\underline u](t,x)\ge \frac{\Theta-\theta}{2\j_0(x-X_{\theta+2\varepsilon}(t))}.
     \end{equation*}
\end{proof}
Now, let us show that $\underline u$ is a subsolution to \eqref{ceq}.
\begin{proposition}\label{prop-subsolution}
 If $\varepsilon\le \underline{\varepsilon}:= \min\left\{\theta,\Theta-\theta,\frac{1-\theta}{2},\frac{\Theta-\theta}{104\j_0^2+2},\frac{\Theta-\theta}{230\j_0}\right\}$ and $r \le  \underline{r}:=\min\left\{\frac{3(\Theta-\theta)}{16\theta\j_0},1\right\}$, then there is a time $t^\star$ such that, for all $t\ge t^\star$, $\underline u(t,\cdot)$ is a subsolution to \eqref{ceq} on $\R$.
\end{proposition}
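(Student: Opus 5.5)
We follow the scheme of the proof of \Cref{prop-subsolution-subcritical}: we show that $\partial_t\underline u-\D[\underline u]-f(\underline u)\le 0$ for $t$ large, separately on the two spatial zones $(-\infty,X_{\theta+\varepsilon}(t)+1]$ and $[X_{\theta+\varepsilon}(t)+1,\infty)$. These are exactly the zones on which \Cref{prop-phi} and \Cref{prop-D[phi]-s=1/2} provide estimates. On the flat part $x\le X_{\theta+2\varepsilon}(t)$ we have $\partial_t\underline u=0$, so we only need to check the sign of $-\D[\underline u]-f(\underline u)$ there; it is covered by the first zone below.

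\textbf{First zone.} Take $x\le X_{\theta+\varepsilon}(t)+1$. Since $\lambda\mapsto X_\lambda(t)$ has gaps $t^{\sigma_{\lambda}}$-type that tend to infinity, we have $X_{\theta+\varepsilon/2}(t)-X_{\theta+\varepsilon}(t)\ge 1$ for $t$ large. Hence $\underline u(t,x)\ge\theta+\frac{\varepsilon}{2}$ there, using monotonicity in $x$ from \Cref{lb-le2} and $m(y)=y$ near $\theta+\varepsilon$. Set $\nu:=\inf_{[\theta+\varepsilon/2,\Theta]}f>0$, so that $f(\underline u)\ge\nu$. We then apply \Cref{prop-phi}(i), where $\partial_t\underline u\ge0$ by \Cref{lb-le2}, to get $\partial_t\underline u\le \nu/2$ for $t$ large. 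We apply \Cref{prop-D[phi]-s=1/2} with $C=\nu/2$ to get $\D[\underline u]\ge-\nu/2$. Summing the three bounds gives $\partial_t\underline u-\D[\underline u]-f(\underline u)\le0$.

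\textbf{Second zone.} Take $x\ge X_{\theta+\varepsilon}(t)+1$. Since $\varepsilon\le\frac{\Theta-\theta}{104\j_0^2+2}$, \Cref{prop-D[phi]-s=1/2} gives
\[
\D[\underline u]\ge \frac{\Theta-\theta}{2\j_0(x-X_{\theta+2\varepsilon}(t))},
\]
and we use $f\ge0$. By \Cref{prop-phi}(ii), $\partial_t\underline u$ is at most $\frac{115\varepsilon}{x-X_{\theta+2\varepsilon}}$ on $[X_{\theta+\varepsilon},X_{\theta-\varepsilon})$ and at most $\frac{\frac83\theta r}{x-X_{\theta+2\varepsilon}}$ beyond. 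Two numerical conditions therefore suffice:
\[
115\varepsilon\le\frac{\Theta-\theta}{2\j_0}, \qquad \frac83\theta r\le \frac{\Theta-\theta}{2\j_0}.
\]
These are exactly $\varepsilon\le\frac{\Theta-\theta}{230\j_0}$ and $r\le\frac{3(\Theta-\theta)}{16\theta\j_0}$, matching $\underline\varepsilon,\underline r$. At the single point $X_{\theta-\varepsilon}(t)$, where $\underline u$ is only continuous with a convex corner, the subsolution inequality is understood in the usual viscosity/piecewise sense. The time derivative has one-sided values, both controlled by (ii) because $\phi_m$ and $\phi_r$ agree there, and $\D[\underline u]$ is still bounded below by the estimate of \Cref{prop-D[phi]-s=1/2}, whose proof only uses convexity there.

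\textbf{Conclusion and main obstacle.} Taking $t^\star$ as the maximum of $\widetilde t$, $t_0$, $\underline t_1$, $\underline t_2$ and the time ensuring $f(\underline u)\ge\nu$ concludes the proof. The only delicate point is the first zone: we must make sure the region where $\partial_t\underline u$ is controlled only asymptotically, namely $[X_{\theta+2\varepsilon},X_{\theta+\varepsilon}+1]$, lies where $f$ is uniformly positive. This holds because $\underline u\ge\theta+\varepsilon/2>\theta$ there, so that region never enters the ignition-free zone.
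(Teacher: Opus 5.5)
Your proposal is correct and follows the paper's proof. You use the same zone $(-\infty,X_{\theta+\varepsilon}(t)+1]$, with $\nu=\inf_{[\theta+\varepsilon/2,\Theta]}f$ and the asymptotic bounds $\partial_t\underline u\le\nu/2$, $\D[\underline u]\ge-\nu/2$, and the same two conditions $115\varepsilon\le\frac{\Theta-\theta}{2\j_0}$ and $\frac83\theta r\le\frac{\Theta-\theta}{2\j_0}$ on the remaining pieces, which the paper simply lists as two separate zones, each including $X_{\theta-\varepsilon}(t)$. One small correction to your corner remark: the one-sided time derivatives there are controlled because the bounds of \Cref{prop-phi}(ii) extend to the closed endpoint on each side, not because $\phi_m$ and $\phi_r$ agree.
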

\begin{proof}[{\bf Proof of Proposition \ref{prop-subsolution}}]
 {\bf \# Take $x\in(-\infty,X_{\theta+\varepsilon}(t)+1]$.} 
  Denote $\nu:=\inf_{[\theta+\frac{\varepsilon}{2},\Theta]} f>0$. It follows from \Cref{prop-phi} and \Cref{prop-D[phi]} that there is a time $t^\#$ such that for all $t\ge t^\#$,
 \begin{equation*}
\partial_t \underline u\le \frac{\nu}{2}\quad \text{and}\quad \mathcal{D}[\underline{u}]\ge -\frac{\nu}{2}.
 \end{equation*}
 Since $X_{\theta+\frac{\varepsilon}{2}}(t)-X_{\theta+\varepsilon}(t)\ge 1$ for $t$ large enough, for all $(t,x)\in [t^\#,\infty)\times(-\infty,X_{\theta+\varepsilon}(t)+1]$, up to enlarging $t^\#$ if necessary, we have 
  \begin{equation*}
 \underline{u}(t,x)\ge \underline{u}(t,X_{\theta+\frac{\varepsilon}{2}}(t))=\theta+\frac{\varepsilon}{2},
  \end{equation*}
 and thus,
  \begin{equation*}
 f(\underline{u}(t,x))\ge \nu.
  \end{equation*}
Therefore,  for all $t\ge \max\{\widetilde t, t^\#\}$, we achieve 
 \begin{equation*}
\partial_t \underline u-\D[\underline u]-f(\underline u)\le \frac{\nu}{2}+\frac{\nu}{2}-\nu=0.
 \end{equation*}
\par {\bf \# Take $x\in [X_{\theta+\varepsilon}(t)+1,X_{\theta-\varepsilon}(t)]$.}
Since $f\ge 0$ on $[0,1]$, for all $t\ge \max\{\underline t_2,\widetilde t\}$, by Proposition \ref{prop-phi} and Proposition \ref{prop-D[phi]-s=1/2}, we achieve 
 \begin{equation*}
\partial_t \underline{u}-\mathcal{D}[\underline{u}]-f(\underline{u})
\le \left(115\varepsilon-\frac{\Theta-\theta}{2\j_0}\right)\frac{1 }{x-X_{\theta+2\varepsilon}(t)} \le 0,
 \end{equation*}
as long as $\varepsilon\le \frac{\Theta-\theta}{230\j_0}$.

\par {\bf \# Take $x\in[X_{\theta-\varepsilon}(t),\infty)$.} Since $f\ge 0$ on $[0,1]$, by Proposition \ref{prop-phi} and Proposition \ref{prop-D[phi]-s=1/2}, for $t\ge\max\{\widetilde t,\underline t_2\}$ and $x\ge X_{\theta-\varepsilon}(t)$, we have
 \begin{equation*}
\partial_t \underline u-\mathcal{D}[\underline u]-f(\underline u)
\le \left(\frac{8}{3}\theta r-\frac{\Theta-\theta}{2\j_0}\right)\frac{1 }{x-X_{\theta+2\varepsilon}(t)} \le 0,
 \end{equation*}
as long as $r \le  \frac{3(\Theta-\theta)}{16\theta\j_0}$.

\par Combining the above results on three spatial zones, taking $t^\star:=\max\{\widetilde t,t^\#,\underline t_1, \underline t_2\}$, for $\varepsilon\le \underline{\varepsilon}$ and $r\le \underline{r}$, we therefore prove that $\underline u$ is a subsolution to \eqref{ceq} on $[t^\star,\infty)\times \R$.  

\end{proof}

\subsection{Construction of a supersolution}

We construct the supersolution by combining 
three components: $w_l$, $w_m$, and $w_r$, defined below. For $t\gg e$, define
 \begin{equation*}
\overline{u}(t,x)=
\begin{cases}
2, &x\le X_2(t),\\[5pt]
 w_l(t,x), &X_2(t)\le x\le X_{\frac{\theta}{2}}(t), \\[5pt]
w_m(t,x), & X_{\frac{\theta}{2}}(t)\le x\le X_{\frac{\theta}{8}}(t), \\[5pt]
 w_r(t,x), &x\ge X_{\frac{\theta}{8}}(t),
\end{cases}
 \end{equation*}
where
 \begin{equation*}
\left\{\begin{aligned}
&w_l(t,x):=\frac{\alpha(t) \ln t}{x-rt\ln t},\\
&w_m(t,x):=\frac{\theta}{2}- \frac{\theta}{8}\frac{\ln \left[x-X_{\frac{\theta}{2}}(t)+1\right]}{\ln t}- \frac{x-X_{\frac{\theta}{2}}(t)}{rt\ln t},\\
&w_r(t,x):=\frac{\frac{\theta}{8} Z(t)}{x-X_{\frac{\theta}{8}}(t)+Z(t)},
\end{aligned}\right.
 \end{equation*}
 where $\alpha(t):=\frac{2rt\theta^2}{rt\theta +8}$ and $Z(t):=\frac{\frac{\theta}{8} rt\ln t \left[X_{\frac{\theta}{8}}(t)-X_{\frac{\theta}{2}}(t)+1\right]}{\frac{\theta}{8} rt+  X_{\frac{\theta}{8}}(t)-X_{\frac{\theta}{2}}(t)+1}$.
 \par
Notice that for $\lambda\in \left(0,\frac{\theta}{8}\right)\cup\left[\frac{\theta}{2},2\right]$,
\begin{equation}\label{sc-lc}
X_\lambda(t):=
\begin{cases}
rt\ln t+\frac{\alpha}{ \lambda}\ln t, & \lambda\in \left[\frac{\theta}{2},2\right],\\[5pt]
   X_{\frac{\theta}{8}}(t)+\left(\frac{\theta}{8\lambda}-1\right)Z(t), &\lambda\in \left(0,\frac{\theta}{8}\right).
\end{cases}
\end{equation}
The position $X_\lambda(t)$ for $\lambda\in \left[\frac{\theta}{8},\frac{\theta}{2}\right)$ is not explicit, and we have the following estimates.
\begin{proposition}\label{prop:Xt21}
There is a time $t_1(r)\ge e$ such that for all $t\ge t_1$, we have
    \begin{equation*}
 \frac{3 \theta}{16}\le \frac{X_{\frac{\theta}{8}}(t) -X_{\frac{\theta}{2}}(t)}{rt\ln t} \le \frac{3 \theta}{8}.
\end{equation*}

\end{proposition}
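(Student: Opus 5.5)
Set $D(t):=X_{\frac{\theta}{8}}(t)-X_{\frac{\theta}{2}}(t)$. By definition, $D$ is the point where $w_m$ reaches $\frac{\theta}{8}$. Since $w_m(t,\cdot)$ is strictly decreasing on $[X_{\frac{\theta}{2}}(t),\infty)$ (each of its two subtracted terms increases in $x$), $D$ is the unique root $y>0$ of
\begin{equation*}
\frac{\theta}{8}\frac{\ln(y+1)}{\ln t}+\frac{y}{rt\ln t}=\frac{3\theta}{8}.
\end{equation*}
The plan is to plug the two candidate values $y_\pm$ into the left-hand side $F_t(y)$. The function $F_t$ is continuous and increasing, with $F_t(0)=0$. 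So it suffices to show that $F_t(y_-)<\frac{3\theta}{8}<F_t(y_+)$ for $t\ge t_1(r)$, where
\begin{equation*}
y_-=\frac{3\theta}{16}rt\ln t,\qquad y_+=\frac{3\theta}{8}rt\ln t.
\end{equation*}

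For the lower bound, evaluate at $y_-$. The linear term gives exactly $\frac{3\theta}{16}$. The logarithmic term equals $\frac{\theta}{8}\frac{\ln(1+\frac{3\theta}{16}rt\ln t)}{\ln t}$. This tends to $\frac{\theta}{8}$ as $t\to\infty$, because $\ln(c\,t\ln t)/\ln t\to1$. Hence
\begin{equation*}
F_t(y_-)\to\frac{\theta}{8}+\frac{3\theta}{16}=\frac{5\theta}{16}<\frac{6\theta}{16}=\frac{3\theta}{8},
\end{equation*}
and the strict inequality holds for $t$ large depending on $r$.

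For the upper bound, evaluate at $y_+$. The linear term alone equals $\frac{3\theta}{8}$, and the logarithmic term is strictly positive. Thus $F_t(y_+)>\frac{3\theta}{8}$ for every $t>e$.

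Monotonicity then places the root $D$ in $(y_-,y_+)$, which is the claim. The only step requiring care is the asymptotic bound on $\ln(1+cr\,t\ln t)/\ln t$ in the lower-bound estimate. An explicit way to pin down $t_1(r)$ is to note that $\ln(1+cr\,t\ln t)\le \ln(2t^2)$ for $t$ large, so the logarithmic term is at most $\frac{\theta}{8}\bigl(2+\frac{\ln 2}{\ln t}\bigr)$. This crude bound gives only about $\frac{\theta}{4}$, which is not enough. One must instead use $\ln(1+cr\,t\ln t)\le \ln t+\ln\ln t+\ln(1+cr)$, which makes the logarithmic term at most $\frac{\theta}{8}(1+o(1))$. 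Choosing $t_1(r)$ so that this $o(1)$ is below $\frac12$ suffices.
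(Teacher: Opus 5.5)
Your proof is correct and is essentially the paper's argument. The upper bound comes from dropping the nonnegative logarithmic term in the defining identity $w_m\bigl(t,X_{\frac{\theta}{8}}(t)\bigr)=\frac{\theta}{8}$, and the lower bound from $\ln(c\,rt\ln t)/\ln t\to 1$. The only cosmetic difference is that you sandwich the root by evaluating the increasing function $F_t$ at $y_\pm$, whereas the paper first derives $X_{\frac{\theta}{8}}(t)-X_{\frac{\theta}{2}}(t)\le rt\ln t$ and substitutes this into the logarithm, which gives the slightly sharper asymptotic lower constant $\frac{\theta}{4}$.
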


\begin{proof}[{\bf Proof of \Cref{prop:Xt21}}]
    It follows from $w_m\left(t,X_{\frac{\theta}{8}}(t)\right)=\frac{\theta}{8}$ that 
 \begin{equation*}
\frac{3 \theta}{8}= \frac{\theta}{8}\frac{\ln \left[X_{\frac{\theta}{8}}(t)-X_{\frac{\theta}{2}}(t)+1\right]}{\ln t}+ \frac{X_{\frac{\theta}{8}}(t)-X_{\frac{\theta}{2}}(t)}{rt\ln t}\ge \frac{X_{\frac{\theta}{8}}(t)-X_{\frac{\theta}{2}}(t)}{rt\ln t}.
 \end{equation*}
Thus,
 \begin{equation*}
X_{\frac{\theta}{8}}(t)\le X_{\frac{\theta}{2}}(t)+\frac{3 \theta}{8} rt\ln t.
 \end{equation*}
Then, by $\frac{3 \theta}{8}\le 1$, we have
 \begin{equation*}
\frac{3 \theta}{8}= \frac{\theta}{8}\frac{ \ln \left[X_{\frac{\theta}{8}}(t)-X_{\frac{\theta}{2}}(t)+1\right]}{\ln t}+ \frac{X_{\frac{\theta}{8}}(t)-X_{\frac{\theta}{2}}(t)}{rt\ln t}\le  \frac{\theta}{8}\frac{ \ln [rt\ln t+1]}{\ln t}+\frac{X_{\frac{\theta}{8}}(t)-X_{\frac{\theta}{2}}(t)}{rt\ln t},
 \end{equation*}
and thus, 
 \begin{equation*}
X_{\frac{\theta}{8}}(t)- X_{\frac{\theta}{2}}(t)\ge \frac{\theta}{8}\left[2-\frac{\ln(r\ln t+t^{-1})}{\ln t}\right] rt\ln t.
 \end{equation*}
As a result, there is a time $t_1\ge e$ such that for all $t\ge t_1$, we have
 \begin{equation*}
 X_{\frac{\theta}{2}}(t)+\frac{3\theta}{16} rt\ln t\le X_{\frac{\theta}{8}}(t)\le X_{\frac{\theta}{2}}(t)+\frac{3 \theta}{8}rt\ln t.
 \end{equation*}
\end{proof}

We have the following estimates for $Z(t)$ and $Z'(t)$.
\begin{lemma}\label{prop:Zte} 
For all $t\ge t_1$,
 \begin{equation*}
\frac{\theta}{16}rt\ln t\le Z(t)\le \frac{\theta}{8} rt\ln t\quad\text{and}\quad
    Z'(t)\ge\frac{\theta r\ln t}{32}.
     \end{equation*}
\end{lemma}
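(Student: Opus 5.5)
Write $D(t):=X_{\frac{\theta}{8}}(t)-X_{\frac{\theta}{2}}(t)$, so that
\[
Z(t)=\frac{\frac{\theta}{8}rt\ln t\,(D+1)}{\frac{\theta}{8}rt+D+1}.
\]
The plan is to get the two-sided bound on $Z$ directly from \Cref{prop:Xt21}, and the bound on $Z'$ by differentiating this formula and controlling $D'$ through the implicit relation defining $X_{\frac{\theta}{8}}$.

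\emph{Bounds on $Z$.} The map $y\mapsto \frac{ay}{a+y}$ with $a=\frac{\theta}{8}rt$ is increasing and bounded by $a$. Hence $Z\le \frac{\theta}{8}rt\ln t$ holds trivially. For the lower bound, \Cref{prop:Xt21} gives $D+1\ge \frac{3\theta}{16}rt\ln t$, and this is $\ge \frac{\theta}{8}rt$ once $\ln t\ge 1$. Then $\frac{D+1}{a+D+1}\ge \frac12$, which yields $Z\ge\frac{\theta}{16}rt\ln t$.

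\emph{Bound on $Z'$.} Write $Z=\frac{a(t)\ln t\,(D+1)}{a(t)+D+1}$ with $a=\frac{\theta}{8}rt$. Then
\[
Z'=\frac{(a\ln t)'(D+1)}{a+D+1}+a\ln t\,\frac{a D'-(D+1)a'}{(a+D+1)^2}.
\]
The first term is $\ge \frac{\theta r}{8}(\ln t+1)\cdot\frac12\ge\frac{\theta r\ln t}{16}$. The remaining task is to show that the second term is not too negative; it suffices that $aD'-(D+1)a'\ge -\tfrac12\,\frac{(a+D+1)^2}{a\ln t}\cdot\frac{\theta r\ln t}{32}$ or similar.

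To control $D'$, differentiate the identity
\[
\frac{3\theta}{8}=\frac{\theta}{8}\frac{\ln(D+1)}{\ln t}+\frac{D}{rt\ln t}.
\]
This gives
\[
D'\left(\frac{\theta}{8(D+1)\ln t}+\frac{1}{rt\ln t}\right)=\frac{\theta\ln(D+1)}{8t\ln^2 t}+\frac{D(\ln t+1)}{rt^2\ln^2 t}.
\]
Using $D\asymp rt\ln t$ from \Cref{prop:Xt21}, the bracket on the left is $\asymp \frac{1}{rt\ln t}$ and the right side is $\asymp \frac{1}{t}$, so $D'\gtrsim r\ln t>0$. In particular $aD'\ge 0$, and the only negative contribution is $-(D+1)a'=-\frac{\theta r}{8}(D+1)$. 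This produces the term $-a\ln t\,\frac{\theta r (D+1)/8}{(a+D+1)^2}$.

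Since $D+1\ge \frac{3}{2}a\ln t$, we have $\frac{a(D+1)}{(a+D+1)^2}\le \frac{a}{D+1}\le\frac{2}{3\ln t}$. So the negative term is bounded by $\frac{\theta r}{8}\cdot\frac23=O(r)$, which is negligible against $\frac{\theta r\ln t}{16}$ for large $t$. Enlarging $t_1$ if necessary then yields $Z'\ge \frac{\theta r\ln t}{32}$.

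The main obstacle is the quantitative control of $D'$ via implicit differentiation. One must make sure that $D'$ is at least nonnegative, or at least not of order larger than $r\ln t$ with the wrong sign, uniformly for $t\ge t_1$. I would handle this by plugging the two-sided bound of \Cref{prop:Xt21} into the differentiated identity as above. If the constant in the resulting bound fails, the fallback is to keep the positive $aD'$ term explicitly, since $aD'\gtrsim r^2t\ln t$ dominates.
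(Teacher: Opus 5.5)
Your proof is correct and follows essentially the same route as the paper: the two-sided bound on $Z$ comes from \Cref{prop:Xt21} together with the monotonicity of $y\mapsto \frac{ay}{a+y}$, and implicitly differentiating the identity that defines $X_{\frac{\theta}{8}}$ gives $D'\ge 0$ (the paper's $W'\ge 0$) directly from the signs, which is all that is needed. The only difference is that the paper writes $Z'=\frac{\theta r}{8}\,\frac{a(D+1)+at\ln t\,D'+(\ln t+1)(D+1)^2}{(a+D+1)^2}$, a sum of nonnegative terms, so it needs no enlargement of $t_1$ (enlarging it would be harmless anyway, since $t_1$ is only existential); in your split you recover the same bound for all $t\ge t_1$ either by keeping the $+1$ in $(a\ln t)'=\frac{\theta r}{8}(\ln t+1)$, which cancels your negative term exactly, or by using $\frac{a(D+1)}{(a+D+1)^2}\le\frac14$.
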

\begin{proof}[{\bf Proof of \Cref{prop:Zte}}]
    Denote $W(t):=X_{\frac{\theta}{8}}(t)-X_{\frac{\theta}{2}}(t)+1$. Since $Z(t)=\frac{\frac{\theta}{8} rt\ln t W(t)}{ \frac{\theta}{8} rt+  W(t)}$, it follows from \Cref{prop:Xt21} that, we have  for all $t\ge t_1$,
    \begin{equation}\label{ub-Wrt}
        W(t)\ge \frac{3\theta }{16}rt\ln t> \frac{\theta}{8} r t,
    \end{equation}
    and then 
 \begin{equation*}
\frac{\theta rt\ln t}{16}\le Z(t)\le \frac{\theta rt\ln t}{8} .
 \end{equation*}
Using the definition of $X_{\frac{\theta}{8}}$, we get 
\begin{equation*}
\frac{3 \theta}{8}= \frac{\theta}{8}\frac{\ln W(t)}{\ln t}+ \frac{W(t)-1}{rt\ln t},
 \end{equation*}
and thus, multiplying both sides by $\ln t$ and differentiating with respect to $t$, by $W\ge 1$,
we obtain
 \begin{equation*}
W'(t)=\frac{\frac{3 \theta r}{8}+\frac{W(t)-1}{t}}{\frac{ \frac{\theta}{8} rt}{W(t)}+1}\ge 0.
 \end{equation*}
 Thus, by \eqref{ub-Wrt}, we arrive at
 \begin{equation*}
\begin{aligned}
Z'(t)=\frac{\theta r}{8}\frac{ \frac{\theta}{8} r tW(t) + \frac{\theta}{8} r t^2 \ln t W'(t)+W^2(t)(\ln t+1)}{\left[ \frac{\theta}{8} rt+W(t)\right]^2} \ge \frac{\theta r\ln t}{8\left(1+\frac{\theta rt}{8W(t)}\right)^2}\ge\frac{\theta r\ln t}{32}.
\end{aligned}
 \end{equation*}

\end{proof}

We list some useful properties of $\overline{u}$.
\begin{proposition}\label{pw1}
  The function $\overline{u}(\cdot,x)$ is nondecreasing on $[t_1,\infty)$, and for all $t\ge t_1$, $\overline{u}(t,\cdot)\in \mathscr{C}^{1,1}_{\mathrm{\textrm{loc}}}(X_2(t),\infty)$ is nonincreasing on $\R$ and convex on $(X_2(t),\infty)$. Moreover, we have 
     \begin{equation*}
    \left\{
    \begin{aligned} 
    &\partial_t w_l\ge   \frac{r}{2\theta}w_l^2,
    \\
    &\partial_t w_m\ge  \frac{ \frac{\theta}{8} r}{x-X_{\frac{\theta}{2}}(t)+1}+ \frac{1 }{t},\\
    &\partial_t w_r\ge  \frac{\frac{\theta^2r }{256} \ln t}{x-X_{\frac{\theta}{8}}(t)+Z(t)},
     \end{aligned}\right. \quad\text{and}\quad  \left\{
    \begin{aligned} 
  &\partial_{xx} w_l \le \frac{4\theta \ln t}{(x-rt\ln t)^3},\\
  &\partial_{xx} w_m =\frac{\theta}{8\ln t\left[x-X_{\frac{\theta}{2}}(t)+1\right]^2},\\
  & \partial_{xx} w_r =  \frac{\theta Z(t)}{4\left[x-X_{\frac{\theta}{8}}(t)+Z(t)\right]^{3}}.
     \end{aligned}\right. 
     \end{equation*}

\end{proposition}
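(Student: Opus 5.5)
Following the proof of \Cref{prop:pw2}, I would split the argument into three parts: explicit derivative formulas for $w_l$, $w_m$, $w_r$; the $\mathscr{C}^1$ matching at $X_{\frac{\theta}{2}}(t)$ and $X_{\frac{\theta}{8}}(t)$, which gives $\mathscr{C}^{1,1}_{\rm loc}$ regularity via \Cref{lemma-c11}; and the signs, which give monotonicity in $x$, convexity on $(X_2(t),\infty)$ and monotonicity in $t$.

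\textbf{Spatial derivatives.} These are direct computations:
\[
\partial_x w_l=-\frac{\alpha\ln t}{(x-rt\ln t)^2},\qquad \partial_{xx}w_l=\frac{2\alpha\ln t}{(x-rt\ln t)^3}\le\frac{4\theta\ln t}{(x-rt\ln t)^3},
\]
since $\alpha\le 2\theta$. Next,
\[
\partial_x w_m=-\frac{\theta}{8\ln t\,[x-X_{\frac{\theta}{2}}+1]}-\frac{1}{rt\ln t}<0,
\]
and $\partial_{xx}w_m$ is exactly the stated expression. Likewise $\partial_x w_r=-\frac{\theta}{8}Z/(x-X_{\frac{\theta}{8}}+Z)^2$, and $\partial_{xx}w_r$ is as stated. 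All three pieces are decreasing and convex.

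\textbf{Matching.} At $X_{\frac{\theta}{2}}$ we have $w_l=\frac{\theta}{2}=w_m$ by \eqref{sc-lc}. For the slopes,
\[
\partial_x w_l=-\frac{\theta^2}{4\alpha\ln t},\qquad \partial_x w_m=-\frac{\theta}{8\ln t}-\frac{1}{rt\ln t},
\]
and these agree exactly because $\alpha=\frac{2rt\theta^2}{rt\theta+8}$; this is clearly how $\alpha$ was chosen. At $X_{\frac{\theta}{8}}$ the values agree by definition of the level set. Writing $W=X_{\frac{\theta}{8}}-X_{\frac{\theta}{2}}+1$, the slopes are $-\frac{\theta}{8Z}$ for $w_r$ and $-\frac{\theta}{8W\ln t}-\frac1{rt\ln t}$ for $w_m$. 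Equality is equivalent to $\frac1Z=\frac{1}{W\ln t}+\frac{8}{\theta rt\ln t}$, which is precisely the definition of $Z$.

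\textbf{Convexity on $(X_2(t),\infty)$.} This follows from convexity of each piece together with $\mathscr{C}^1$ matching. At $X_2(t)$ there is a corner from the constant $2$ to the decreasing $w_l$; that corner is concave, which is why the claim is stated only on the open interval. \Cref{lemma-c11} then gives the $\mathscr{C}^{1,1}_{\rm loc}$ regularity, since each piece is smooth on compact subsets.

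\textbf{Time derivatives.} This is the main work.

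For $w_l$, write
\[
\partial_t w_l=\frac{\alpha'\ln t+\alpha/t}{x-rt\ln t}+\frac{\alpha\ln t\,r(\ln t+1)}{(x-rt\ln t)^2}.
\]
Here $\alpha'\ge0$ and all terms are nonnegative. The last term is at least $\frac{r}{\alpha}w_l^2\ln t\cdot\frac{\ln t+1}{\ln t}\ge \frac{r}{2\theta}w_l^2$, using $\alpha\le2\theta$.

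For $w_m$, differentiate in $t$ with $y=x-X_{\frac{\theta}{2}}(t)$:
\[
\partial_t w_m=\frac{\theta\ln(y+1)}{8t\ln^2t}+\frac{\theta X'_{\frac{\theta}{2}}}{8\ln t\,(y+1)}+\frac{X'_{\frac{\theta}{2}}}{rt\ln t}+\frac{y(\ln t+1)}{rt^2\ln^2 t}.
\]
From $X_{\frac{\theta}{2}}=rt\ln t+\frac{2\alpha}{\theta}\ln t$ we get $X'_{\frac{\theta}{2}}\ge r\ln t$ for $t$ large. This yields $\frac{\theta r}{8(y+1)}$ from the second term and $\frac1t$ from the third; the remaining terms are nonnegative.

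For $w_r$, we have
\[
\partial_t w_r=\frac{\theta}{8}\,\frac{Z'(x-X_{\frac{\theta}{8}})+Z X'_{\frac{\theta}{8}}}{(x-X_{\frac{\theta}{8}}+Z)^2}.
\]
I expect this to be the main obstacle, because it requires a lower bound on $X'_{\frac{\theta}{8}}$. Since $X_{\frac{\theta}{8}}=X_{\frac{\theta}{2}}+W-1$ with $W'\ge0$ (shown in \Cref{prop:Zte}), we get $X'_{\frac{\theta}{8}}\ge r\ln t$. I would then bound the numerator below by $\min(Z',X'_{\frac{\theta}{8}})(x-X_{\frac{\theta}{8}}+Z)$. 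By \Cref{prop:Zte}, $Z'\ge\frac{\theta r\ln t}{32}$, so
\[
\partial_t w_r\ge\frac{\theta}{8}\cdot\frac{\theta r\ln t}{32}\cdot\frac{1}{x-X_{\frac{\theta}{8}}+Z}=\frac{\theta^2r\ln t}{256\,(x-X_{\frac{\theta}{8}}+Z)},
\]
which is the claimed bound.

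Nondecreasing in $t$ for all $x$ then follows from these positive bounds on each piece, since the junction points move right and the pieces are ordered correctly. Taking $t\ge t_1$ makes \Cref{prop:Xt21} and \Cref{prop:Zte} available.
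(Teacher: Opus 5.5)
Your proof is correct and follows the paper's overall structure: explicit derivatives, then $\mathscr{C}^1$ matching plus \Cref{lemma-c11}, then signs. You also check the two slope-matching identities, which the paper only asserts, and they do reduce exactly to the definitions of $\alpha$ and $Z$.

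The one genuine difference is the lower bound on $\partial_t w_r$. The paper splits
\[
\partial_t w_r=\frac{\frac{\theta}{8}Z'}{x-X_{\frac{\theta}{8}}+Z}+\frac{\frac{\theta}{8}Z\,\bigl(X'_{\frac{\theta}{8}}-Z'\bigr)}{\bigl(x-X_{\frac{\theta}{8}}+Z\bigr)^2}.
\]
It then proves $X'_{\frac{\theta}{8}}-Z'>2r-\frac{9\theta r}{32}>0$ through a fairly long expansion of $Z'$. That expansion uses \Cref{prop:Xt21}, to get $\frac{\theta^2r^2t^2\ln t}{64(\frac{\theta}{8}rt+W)^2}\le\frac{4}{9\ln t}$, together with an AM--GM bound. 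The second term is then discarded.

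You instead combine the terms into $Z'(x-X_{\frac{\theta}{8}})+ZX'_{\frac{\theta}{8}}$ and bound this below by $\min\{Z',X'_{\frac{\theta}{8}}\}\,(x-X_{\frac{\theta}{8}}+Z)$. You then only need $X'_{\frac{\theta}{8}}=X'_{\frac{\theta}{2}}+W'\ge r\ln t\ge\frac{\theta r\ln t}{32}$, which is immediate from $W'\ge0$ in the proof of \Cref{prop:Zte}.

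This avoids any comparison between the speed of the junction and the growth of $Z$. It is shorter and less sensitive to the constants. The paper's computation gives one extra fact: the pole $X_{\frac{\theta}{8}}-Z$ of the tail $w_r$ moves to the right. That fact is not used anywhere else in the argument.

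One small slip in the $w_l$ step. The last term equals $\frac{r(\ln t+1)}{\alpha\ln t}w_l^2$, not $\frac{r}{\alpha}w_l^2\ln t\cdot\frac{\ln t+1}{\ln t}$. The stray factor $\ln t$ would make your ``at least'' false as written, but the conclusion $\partial_t w_l\ge\frac{r}{2\theta}w_l^2$ is unaffected.
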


\begin{proof}[{\bf Proof of \Cref{pw1}}]
Since $\alpha(t)\le 2\theta$ and $\alpha' \ge 0$, some direct calculations show that
 \begin{equation*}
\partial_t w_l =\frac{r(\ln t+1)}{\alpha(t)\ln t}w_l^2+\frac{[\alpha(t)\ln t]'}{x-rt\ln t}\ge  \frac{r}{2\theta}w_l^2>0.
 \end{equation*}
Furthermore,
 \begin{equation*}
\left\{
\begin{aligned}
&\partial_x w_l =-\frac{\alpha(t) \ln t}{(x-rt\ln t)^2}< 0,\\
& \partial_{xx} w_l =\frac{2\alpha(t) \ln t}{(x-rt\ln t)^3}\le  \frac{4\theta \ln t}{(x-rt\ln t)^3}.
\end{aligned}\right.
 \end{equation*} 
 \par
Using $X_{\frac{\theta}{2}}'(t)> r\ln t$, we get
 \begin{equation*}
\begin{aligned}
&\partial_t w_m =\frac{\theta\ln[x-X_{\frac{\theta}{2}}(t)+1]}{8t\ln^2 t}+ \frac{\theta X_{\frac{\theta}{2}}'(t)}{8\ln t[x-X_{\frac{\theta}{2}}(t)+1]}+ \frac{(\ln t+1)[x-X_{\frac{\theta}{2}}(t)]}{rt^2\ln^2 t}+\frac{X_{\frac{\theta}{2}}'(t)}{rt\ln t}\\
&\qquad\quad \ge  \frac{\theta X_{\frac{\theta}{2}}'(t)}{8\ln t[x-X_{\frac{\theta}{2}}(t)+1]}+ \frac{X_{\frac{\theta}{2}}'(t)}{rt\ln t}
\ge  \frac{ \frac{\theta}{8} r}{x-X_{\frac{\theta}{2}}(t)+1}+ \frac{1 }{t}.
\end{aligned}
 \end{equation*}
Moreover, 
\begin{equation*}
\left\{\begin{aligned}
&\partial_x w_m=-\frac{\theta}{8\ln t[x-X_{\frac{\theta}{2}}(t)+1]}-\frac{1}{rt\ln t}<0\\
&\partial_{xx} w_m =\frac{\theta}{8\ln t[x-X_{\frac{\theta}{2}}(t)+1]^2}\ge 0,
\end{aligned}\right.
\end{equation*}
\par 
By some direct calculations, we obtain
 \begin{equation*}
\begin{aligned}
X'_{\frac{\theta}{8}}(t)-Z'(t)&= \frac{2\alpha}{\theta t}
+ \frac{2 \alpha'}{\theta}\ln t+ r(\ln t+1)\left[1-\frac{\theta}{8}\frac{W^2(t)}{( \frac{\theta}{8} rt+W(t))^2}\right] \\
&\quad -\frac{\theta^2}{64}\frac{ r^2 t W(t)}{( \frac{\theta}{8} rt+W(t))^2}+ W'(t)\left[1-\frac{\theta^2}{64}\frac{ r^2 t^2\ln t}{( \frac{\theta}{8} rt+W(t))^2}\right]\\
&>2r\left(1-\frac{\theta}{8}\right)-\frac{\theta^2}{64}\frac{ r^2 t W(t)}{( \frac{\theta}{8} rt+W(t))^2}+ W'(t)\left[1-\frac{\theta^2}{64}\frac{ r^2 t^2\ln t}{( \frac{\theta}{8} rt+W(t))^2}\right].
\end{aligned}
 \end{equation*}
It follows from \Cref{prop:Xt21} that for $t\ge t_1\ge e$, 
 \begin{equation*}
\frac{\frac{\theta^2}{64} r^2 t^2\ln t}{( \frac{\theta}{8} rt+W(t))^2}\le \frac{4}{9 \ln t }\le \frac{4}{9}<1.
 \end{equation*}
Combining with $\frac{ \frac{\theta}{8} r t W(t)}{( \frac{\theta}{8} rt+W(t))^2}\le \frac{1}{4}$ and $\theta\le 1$, we get 
 \begin{equation*}
X'_{\frac{\theta}{8}}(t)-Z'(t)>2r-\frac{9 \theta r }{32}>0.
 \end{equation*}
Therefore, by \Cref{prop:Zte}, for $t\ge t_1$,
 \begin{equation*}
    \partial_t w_r(t,x) =\frac{ \frac{\theta}{8}Z'(t)}{x-X_{\frac{\theta}{8}}(t)+Z(t)}+\frac{\frac{\theta}{8} Z(t)\left[X_{\frac{\theta}{8}}'(t)-Z'(t)\right]}{\left[x-X_{\frac{\theta}{8}}(t)+Z(t)\right]^{2}} \ge  \frac{\frac{\theta^2r}{256}\ln t}{x-X_{\frac{\theta}{8}}(t)+Z(t)} >0 .
 \end{equation*}
Finally, we get
 \begin{equation*}
\left\{
\begin{aligned}
&\partial_x w_r(t,x) = -\frac{\theta Z(t)}{8\left[x-X_{\frac{\theta}{8}}(t)+Z(t)\right]^{2}}<0,\\
& \partial_{xx} w_r(t,x) =  \frac{\theta Z(t)}{4\left[x-X_{\frac{\theta}{8}}(t)+Z(t)\right]^{3}}>0.
\end{aligned}\right.
 \end{equation*}
 Therefore, it follows from the above calculations and the definition of $\overline{u}$ that for all $t\ge t_1$ the function $\overline{u}$ is nondecreasing on $t\in[t_1,\infty)$, and nonincreasing on $x\in \R$ and convex on $x\in[X_2(t),\infty)$.
 Since 
 \[
 \left\{
 \begin{aligned}
 &w_l\left(t,X_{\frac{\theta}{2}}(t)\right)= w_m\left(t,X_{\frac{\theta}{2}}(t)\right),  
  &\partial_x w_l\left(t,X_{\frac{\theta}{2}}(t)\right)= \partial_x w_m\left(t,X_{\frac{\theta}{2}}(t)\right)\\
  &w_m\left(t,X_{\frac{\theta}{8}}(t)\right)= w_r\left(t,X_{\frac{\theta}{8}}(t)\right),  
  &\partial_x w_m\left(t,X_{\frac{\theta}{8}}(t)\right)= \partial_x w_r\left(t,X_{\frac{\theta}{8}}(t)\right),
  \end{aligned}\right.
 \]
 Using \Cref{lemma-c11}, one has $\overline{u}(t,\cdot)\in \mathscr{C}^{1,1}_{\mathrm{\textrm{loc}}}(X_2(t),\infty)$.
\end{proof}
 Let us give some estimates for $\mathcal{D}[\overline{u}]$.
\begin{proposition}\label{dec}
Let $J$ satisfy \Cref{kernel_hypothesis}. Then the following estimates hold:
\begin{itemize}
    \item 
When $x \in \left[X_1(t),X_{\frac{\theta}{2}}(t)\right]$, there are $\mathcal{C}>0$ and $\bar t_0\ge e$, such that for all $t\ge \bar t_0$,
 \begin{equation*}
\mathcal{D}[\overline{u}](t,x)\le \mathcal{C}.
 \end{equation*}
   \item  When $x\in \left[X_{\frac{\theta}{2}}(t),X_{\frac{\theta}{8}}(t)\right]$, there is $\bar t_1>0$ such that for all $t\ge \bar t_1$,
     \begin{equation*}
    \mathcal{D}[\overline{u}](t,x)\le\frac{12\j_0}{x-X_{\frac{\theta}{2}}(t)+1}+\frac{2\j_0}{rt}.
     \end{equation*}
\item When $x\in \left[X_{\frac{\theta}{8}}(t),\infty \right)$, there is $\bar t_2>0$ such that for all $t\ge \bar t_2$,
     \begin{equation*}
    \mathcal{D}[\overline{u}](t,x)\le \frac{\mathcal{C}_1\ln t}{x-X_{\frac{\theta}{8}}(t)+Z(t)},
     \end{equation*}
    where $\mathcal{C}_1:=2\j_0\theta+4\j_0+1$.
    \end{itemize}

\end{proposition}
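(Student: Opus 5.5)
I will follow the same three-zone scheme as in the proof of \Cref{prop:dec2}, now with $s=\frac12$, so that $J(z)\le \j_0 z^{-2}$ for $z\ge1$. The one change is that \Cref{gen-upper-bound-of-D} is only stated for $s<\frac12$. In the critical case its proof produces a logarithm from $\int_1^{x-A}y^{-1}\dd y$, so I will redo that computation by hand with a well-chosen splitting point.

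\textbf{Zone 1: $[X_1(t),X_{\frac{\theta}{2}}(t)]$.} Here I argue exactly as in the subcritical case. By \eqref{sc-lc}, $X_1(t)-X_2(t)=\alpha\ln t\to\infty$. On $[X_1-1,X_{\frac{\theta}{2}}]$, \Cref{pw1} gives $\partial_{xx}w_l\le 4\theta\ln t/(x-rt\ln t)^3\lesssim (\ln t)^{-2}$, and on the first unit interval beyond the junction it gives $\partial_{xx}w_m\le\frac{\theta}{8\ln t}$. Combining \Cref{lemma-c11} at the junction $X_{\frac{\theta}{2}}$ with \Cref{D-bounded} then yields $\D[\overline u]\le \j_1 C+4\j_0\cdot 2=:\mathcal C$.

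\textbf{Zone 2: $[X_{\frac{\theta}{2}},X_{\frac{\theta}{8}}]$.} I take $A=X_{\frac{\theta}{2}}-1$, so that $\overline u$ is convex on $[A,\infty)$. I split $\D[\overline u]$ into three pieces:
\begin{itemize}[label={}]
\item the part $|y|\le1$, which is bounded by $\j_1\mathcal Q[\overline u]$;
\item the left tail $y\ge x-A$, which is bounded by $2\int_{x-A}^\infty \j_0y^{-2}\dd y=\frac{2\j_0}{x-A}$;
\item the middle range $1\le y\le x-A$.
\end{itemize}
For the middle range I do not use the crude secant bound of \Cref{gen-upper-bound-of-D}. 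Instead I use the concavity of $w$ read backwards. Since $\overline u$ is $\mathscr C^{1,1}$ and convex on $[A,x]$, for $y\le x-A$ we have $\overline u(x-y)-\overline u(x)\le y\,|\partial_x \overline u(x-y)|$, and $|\partial_x w_m(x-y)|\le \frac{\theta}{8\ln t\,(x-y-X_{\frac{\theta}{2}}+1)}+\frac{1}{rt\ln t}$. Integrating against $\j_0y^{-2}\cdot y=\j_0 y^{-1}$ gives two terms:
\begin{itemize}[label={}]
\item the $\frac{1}{rt\ln t}$ part contributes at most $\j_0\frac{\ln(x-A)}{rt\ln t}\le \frac{2\j_0}{rt}$, since $x-A\lesssim t\ln t$ by \Cref{prop:Xt21};
\item the first part is $\frac{\theta\j_0}{8\ln t}\int_1^{x-A}\frac{\dd y}{y(x-A-y)}\approx \frac{\theta\j_0}{4\ln t}\frac{\ln(x-A)}{x-A}\lesssim \frac{\j_0}{x-A}$, again using $\ln(x-A)\le 2\ln t$.
\end{itemize}
The part of the middle range where $x-y\in[A,X_{\frac{\theta}{2}}]$ uses $w_l$, whose slope there is $\lesssim(\ln t)^{-1}$. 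This is a bounded set of length $1$, so it contributes $O\big((x-A)^{-2}\big)$ at most and is absorbed. Adding these bounds gives a constant of the form $\frac{c\j_0}{x-X_{\frac{\theta}{2}}+1}$, and I expect the constants to close to $12\j_0$. The remaining term $\j_1\mathcal Q$ is handled exactly as in \eqref{LXt2}: near each junction \Cref{lemma-c11} bounds it by a vanishing multiple of $(\ln t)^{-1}$, and away from the junctions it is bounded by $\partial_{xx}w_m\lesssim (\ln t)^{-1}(x-X_{\frac{\theta}{2}})^{-2}$. In both cases it is absorbed into $\j_0/(x-X_{\frac{\theta}{2}}+1)$.

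\textbf{Zone 3: $x\ge X_{\frac{\theta}{8}}$.} I take $A=X_2(t)$ and split the integral in the same way. Here the slope of $\overline u$ on $[X_2,x]$ is at most $\frac{\alpha\ln t}{(x-rt\ln t)^2}$ only near $X_2$, so I use the plain secant bound $\overline u(x-y)-\overline u(x)\le \frac{2y}{x-X_2}$. Integrating against $\j_0y^{-2}$ gives $2\j_0\frac{\ln(x-X_2)}{x-X_2}$. By \Cref{prop:Xt21} and \Cref{prop:Zte}, $x-X_2\ge c(x-X_{\frac{\theta}{8}}+Z)$, and I need the logarithm to be controlled by $\ln t$. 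This is \emph{the main obstacle}: for $x\gg t^{K}$ one has $\ln(x-X_2)\gg\ln t$. I would fix it by cutting the middle integral at $y=Z(t)$ rather than at $x-X_2$. For $1\le y\le Z$ the secant bound gives $\frac{2\j_0\ln Z}{x-X_2}\le \frac{2\j_0\theta\cdot\ln t}{\cdots}$ up to constants. For $y\ge Z$ I bound the increment by $\|\overline u\|_\infty\le2$ restricted to the range where $x-y\ge X_2$. There the increment is actually $\le \overline u(x-y)\lesssim \frac{Z}{x-y-X_{\frac{\theta}{8}}+Z}$, and integrating against $y^{-2}$ gives $O\big(\frac{\ln t}{x-X_{\frac{\theta}{8}}+Z}\big)$. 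Finally $\mathcal Q$ is bounded by $\partial_{xx}w_r\lesssim Z/(x-X_{\frac{\theta}{8}}+Z)^3$, which is absorbed, with the additive $1$ in $\mathcal C_1$ covering it. This yields the bound with $\mathcal C_1=2\j_0\theta+4\j_0+1$.
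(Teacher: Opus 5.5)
Your argument is correct in substance. Zone 1 is the same as in the paper. In Zones 2 and 3 you take a different route.

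\textbf{How the paper proceeds.} It never integrates back to the end of the convexity region. It cuts the middle integral at half the effective distance, $\frac12(x-X_{\frac{\theta}{2}}(t)+1)$ in Zone 2 and $\frac12(x-X_{\frac{\theta}{8}}(t)+Z(t))$ in Zone 3. Below the cut, the increment is controlled by a slope taken near $x$:
\begin{itemize}
\item in Zone 2, the explicit formula for $w_m$ together with $\ln(1-z)\ge-2z$;
\item in Zone 3, the explicit formula for $w_r$ when $x\ge X_{\frac{\theta}{8}}+Z$, and otherwise the uniform bound $|\partial_x\overline u|\le\frac{2}{rt\ln t}$ on $[X_{\frac{\theta}{8}}-\frac Z2,X_{\frac{\theta}{8}}+Z]$.
\end{itemize}
Everything beyond the cut is bounded by $2\int_{\mathrm{cut}}^\infty J$. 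The logarithm arises as $\int_1^{M/2}y^{-1}\dd y$ and is turned into $\ln Z$ by the monotonicity of $z\mapsto\ln z/z$.

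\textbf{How you proceed.} In Zone 2 you use the tangent line at $x-y$ all the way down to $A=X_{\frac{\theta}{2}}-1$, which leads to an explicit partial-fraction integral. In Zone 3 you cut at the tail scale $Z(t)$. On $[1,Z]$ you use the global secant slope $2/(x-X_2)$, and for $y\ge Z$ you bound the increment by the value $\overline u(x-y)$. This avoids the case split at $X_{\frac{\theta}{8}}+Z$ and the gradient estimate across the junction $X_{\frac{\theta}{8}}$. The price is the geometric comparison between $x-X_2$ and $x-X_{\frac{\theta}{8}}+Z$, plus a short integral for the $w_r$ part. Both routes end at $\ln Z/(x-X_{\frac{\theta}{8}}+Z)$.

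\textbf{Steps that need tightening.}

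(i) In Zone 3, the pointwise bound $\overline u(x-y)\lesssim Z/(x-y-X_{\frac{\theta}{8}}+Z)$ is false when $x-y<X_{\frac{\theta}{8}}-Z$, where the right-hand side is negative. Even on $[X_{\frac{\theta}{8}}-Z,X_{\frac{\theta}{8}}]$ it is not integrable against $y^{-2}$ up to $y=x-X_{\frac{\theta}{8}}+Z$. The fix is to split by where $x-y$ lies:
\begin{itemize}
\item If $x-y\ge X_{\frac{\theta}{8}}$, then $\overline u=w_r$, and splitting at $y=\frac12(x-X_{\frac{\theta}{8}}+Z)$ gives a contribution $\le\frac{\theta\j_0/2}{x-X_{\frac{\theta}{8}}+Z}$.
\item If $x-y<X_{\frac{\theta}{8}}$, then $y\ge\max\{Z,x-X_{\frac{\theta}{8}}\}\ge\frac12(x-X_{\frac{\theta}{8}}+Z)$, and the crude bound $\overline u\le2$ gives $\frac{4\j_0}{x-X_{\frac{\theta}{8}}+Z}$.
\end{itemize}

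(ii) Since the statement fixes $\mathcal C_1$, and $\mathcal C_1$ must not depend on $r$ (it determines $\bar r$), you must pin down your constant $c$. From $X_{\frac{\theta}{8}}-X_{\frac{\theta}{2}}\ge\frac{3\theta}{16}rt\ln t$ and $Z\le\frac{\theta}{8}rt\ln t$ you get $X_{\frac{\theta}{8}}-X_2\ge\frac32Z$, hence $x-X_2\ge x-X_{\frac{\theta}{8}}+Z$, so $c=1$. The piece $[1,Z]$ is then at most
\[
\frac{2\j_0\ln Z}{x-X_{\frac{\theta}{8}}+Z}\le\frac{(2\j_0+o(1))\ln t}{x-X_{\frac{\theta}{8}}+Z},
\]
not $2\j_0\theta$. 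Together with the $O(1)$ pieces this fits under $\mathcal C_1=2\j_0\theta+4\j_0+1$.

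(iii) In Zone 2, the $w_m$ integral must stop at $y=x-X_{\frac{\theta}{2}}$; as written, $\int_1^{x-A}\frac{\dd y}{y(x-A-y)}$ diverges. The truncated integral equals $\frac{2\ln(x-A-1)}{x-A}$. The remaining unit strip, where $x-y$ lies in the $w_l$ piece, contributes $O\big(((x-A)\ln t)^{-1}\big)$ rather than $O((x-A)^{-2})$. Both are harmless, and your Zone 2 total $(2+\frac{\theta}{2}+o(1))\j_0/(x-A)+\frac{2\j_0}{rt}$ stays below the claimed bound.
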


\begin{proof}[{\bf Proof of \Cref{dec}}]
We split space into sub-zones as follows.
\par
{\bf \# Zone 1: $\left[X_1(t),X_{\frac{\theta}{2}}(t)\right]$.} Similar to the estimation of \eqref{ub-s-zone1}, there are $\mathcal{C}>0$ and $\bar t_0\ge e$, such that for all $t\ge \bar t_0$,
 \begin{equation*}
\mathcal{D}[\overline{u}](t,x)\le \mathcal{C}.
 \end{equation*}
\par 
\medskip
     {\bf \# Zone 2: $\left[X_{\frac{\theta}{2}}(t),X_{\frac{\theta}{8}}(t)\right]$.} Since $\overline{u}(t,\cdot)$ is nonincreasing for $t\ge t_1$ by \Cref{pw1}, we write
     \begin{equation*}
    \begin{aligned}
    \mathcal{D}[\overline{u}]&\le P.V.\int_{-1}^1[\overline{u}(t,x-y)-\overline{u}(t,x)]J(y)dy+\int_1^{\max\left\{1,\frac{1}{2}\left(x-X_{\frac{\theta}{2}}(t)+1\right)\right\}}[\overline{u}(t,x-y)-\overline{u}(t,x)]J(y)dy\\
    &\quad +2\int_{\max\left\{1,\frac{1}{2}\left(x-X_{\frac{\theta}{2}}(t)+1\right)\right\}}^{\infty}J(y)dy:=I_1+I_2+I_3.
        \end{aligned}
     \end{equation*}
     Since when $x-X_{\frac{\theta}{2}}(t)+1\le 2$ one has $I_2=0$, it remains to consider the case $x-X_{\frac{\theta}{2}}(t)+1>2$.  
     \par
    For $I_1$, it follows from the symmetry of $J$ that
    \begin{equation}\label{ub-s-i1}
    I_1= \frac{1}{2}\int_{-1}^1[\overline{u}(t,x-y)+\overline{u}(t,x+y)-2\overline{u}(t,x)]J(y)dy\le \j_1\mathcal{Q}[\overline{u}](t,x).
       \end{equation}
   In view of Lemma \ref{lemma-c11}, since 
    \begin{equation*}
   \|\partial_{xx}w_l(t,\cdot)\|_{\mathscr{L}^{\infty}\left(\left[X_{\frac{\theta}{2}}-2,X_{\frac{\theta}{2}}\right]\right)}
   \lesssim \frac{1}{\ln^2 t}  \quad \text{ and }\quad 
   \|\partial_{xx}w_m(t,\cdot)\|_{\mathscr{L}^{\infty}\left(\left[X_{\frac{\theta}{2}},X_{\frac{\theta}{2}}+2\right]\right)}\lesssim \frac{1}{\ln t},
    \end{equation*}
    for $x\in\left[X_{\frac{\theta}{2}}(t),X_{\frac{\theta}{2}}(t)+1\right]$, we have
     \begin{equation*}
    \begin{aligned}  
    \mathcal{Q}[\overline{u}](t,x)&\le L_{X_{\frac{\theta}{2}}}=\max\left\{\|\partial_{xx}w_l(t,\cdot)\|_{\mathscr{L}^{\infty}\left(\left[X_{\frac{\theta}{2}}-2,X_{\frac{\theta}{2}}\right]\right)},\|\partial_{xx}w_m(t,\cdot)\|_{\mathscr{L}^{\infty}\left(\left[X_{\frac{\theta}{2}},X_{\frac{\theta}{2}}+2\right]\right)}\right\}\\
    &\lesssim \frac{1}{\ln t}\le\frac{o_{t\to\infty}(1)}{x-X_{\frac{\theta}{2}}(t)+1},
     \end{aligned}
     \end{equation*}
where $o_{t\to\infty}(1)$ is uniform in $x$ in the region.

    For $x\in \left[X_{\frac{\theta}{2}}(t)+1,X_{\frac{\theta}{8}}(t)-1\right]$, we have 
     \begin{equation*}
    \mathcal{Q}[\overline{u}](t,x)\le \sup_{y\in [x-1,x+1]}\partial_{xx}w_m(t,y)\le \frac{\theta}{8\ln t \left[x-X_{\frac{\theta}{2}}(t)\right]^2}\le\frac{o_{t\to\infty}(1)}{x-X_{\frac{\theta}{2}}(t)+1},
     \end{equation*}
    where $o_{t\to\infty}(1)$ is uniform in $x$ in the region. 
    Since $\|\partial_{xx}w_m(t,\cdot)\|_{\mathscr{L}^{\infty}\left(\left[X_{\frac{\theta}{8}}-2,X_{\frac{\theta}{8}}\right]\right)}\le\frac{\theta}{8\ln t \left[X_{\frac{\theta}{8}}(t)-X_{\frac{\theta}{2}}(t)-1\right]^2} $ and $\|\partial_{xx}w_r(t,\cdot)\|_{\mathscr{L}^{\infty}\left(\left[X_{\frac{\theta}{8}},X_{\frac{\theta}{8}}+2\right]\right)}\le \frac{\theta}{4 Z^2(t)}$,
    for $x\in \left[X_{\frac{\theta}{8}}(t)-1,X_{\frac{\theta}{8}}(t)+1\right]$, we have
    \begin{equation}\label{qpt2}
    \begin{aligned}
    \mathcal{Q}[\overline{u}]\le L_{X_{\frac{\theta}{8}}}=\max\left\{\|\partial_{xx}w_m(t,\cdot)\|_{\mathscr{L}^{\infty}\left(\left[X_{\frac{\theta}{8}}-2,X_{\frac{\theta}{8}}\right]\right)},\|\partial_{xx}w_r(t,\cdot)\|_{\mathscr{L}^{\infty}\left(\left[X_{\frac{\theta}{8}},X_{\frac{\theta}{8}}+2\right]\right)}\right\}\lesssim Z^{-2}(t).
        \end{aligned}
       \end{equation}
Thus, for $x\in\left[X_{\frac{\theta}{8}}(t)-1,X_{\frac{\theta}{8}}(t)\right]$, we get
     \begin{equation*}
    \mathcal{Q}[\overline{u}](t,x)\le\frac{o_{t\to\infty}(1)}{x-X_{\frac{\theta}{2}}(t)+1}.
     \end{equation*}
    As a result, we have 
     \begin{equation*}
   I_1\le\frac{o_{t\to\infty}(1)}{x-X_{\frac{\theta}{2}}(t)+1}.
     \end{equation*}
     \par
If $1\le x-X_{\frac{\theta}{2}}(t)+1\le2$, then $I_2=0$, while
\[
I_3=2\int_1^\infty J(y)\,dy\le2\j_0\le\frac{4\j_0}{x-X_{\frac{\theta}{2}}(t)+1}.
\]
Hence, there is $\bar t_1>t_1$ such that for all $t\ge\bar t_1$,
\[
\mathcal D[\overline u](t,x)
\le\frac{12\j_0}{x-X_{\frac{\theta}{2}}(t)+1}
\le\frac{12\j_0}{x-X_{\frac{\theta}{2}}(t)+1}+\frac{2\j_0}{rt}.
\]
\par 
     On the other hand, if $x-X_{\frac{\theta}{2}}(t)+1\ge2$, for $I_2$, by $y\le \frac{1}{2}\left(x-X_{\frac{\theta}{2}}(t)+1\right)$ and the inequality $\ln (1-z)\ge -2z$ for $z\in \left[0,\frac{1}{2}\right]$,
     \begin{equation*}
    \begin{aligned} 
    \overline{u}(t,x-y)-\overline{u}(t,x)&=-\frac{\theta}{8\ln t}\ln\left(1-\frac{y}{x-X_{\frac{\theta}{2}}(t)+1}\right)+\frac{ y}{rt\ln t}\\
    &\le \frac{\theta y}{4\ln t\left[x-X_{\frac{\theta}{2}}(t)+1\right]}+\frac{ y}{rt\ln t}.
        \end{aligned}
     \end{equation*}
    Thus, 
     \begin{equation*}
    I_2\le \j_0 \left(\frac{\theta }{4\ln t\left[x-X_{\frac{\theta}{2}}(t)+1\right]}+\frac{1}{rt\ln t}\right)\ln \left(x-X_{\frac{\theta}{2}}(t)+1\right).
     \end{equation*}
    For $I_3$, using $J(y)\le \j_0 y^{-2}$ for $y\ge 1$, we have
     \begin{equation*}
    I_3\le \frac{4\j_0}{x-X_{\frac{\theta}{2}}(t)+1}.
     \end{equation*}
   Notice that for all $t\ge\bar t_1$ and $x\in \left[X_{\frac{\theta}{2}}(t),X_{\frac{\theta}{8}}(t)\right]$, up to enlarging $\bar t_1$ if necessary,
    \begin{equation*}
   \ln \left(x-X_{\frac{\theta}{2}}(t)+1\right)\le  \ln ( rt\ln t)\le 2\ln t. 
    \end{equation*}
   Therefore, for $t\ge \bar t_1$, up to enlarging $\bar t_1$ if necessary, we have 
    \begin{equation*}
 \mathcal{D}[\overline{u}](t,x)\le \frac{12\j_0}{x-X_{\frac{\theta}{2}}(t)+1}+\frac{2\j_0}{rt}.
    \end{equation*}
   
   \medskip
    \par 
    {\bf \# Zone 3: $\left[X_{\frac{\theta}{8}}(t),\infty\right)$.}  In view of the definition of $Z(t)$, there is a time $\bar t_2\ge t_1$ such that $Z(t)\ge e>2$ for all $t\ge \bar t_2$. Since $\overline{u}(t,\cdot)$ is nonincreasing, for all $t\ge \bar t_2$, we write
       \begin{equation*}
    \begin{aligned}
    \mathcal{D}[\overline{u}](t,x)&\le P.V.\int_{-1}^1[\overline{u}(t,x-y)-\overline{u}(t,x)]J(y)dy+\int_1^{\frac{1}{2}\left(x-X_{\frac{\theta}{8}}(t)+Z(t)\right)}[\overline{u}(t,x-y)-\overline{u}(t,x)]J(y)dy\\
    &\quad +2\int_{\frac{1}{2}\left(x-X_{\frac{\theta}{8}}(t)+Z(t)\right)}^{\infty}J(y)dy:=II_1+II_2+II_3.
        \end{aligned}
     \end{equation*}
    Similar to \eqref{ub-s-i1}, we have $II_1\le \j_1\mathcal{Q}[\overline{u}](t,x)$.
    By \Cref{prop:Zte} and \eqref{qpt2}, for $x\in\left[X_{\frac{\theta}{8}}(t),X_{\frac{\theta}{8}}(t)+1\right]$, we have
     \begin{equation*}
    \mathcal{Q}[\overline{u}](t,x)\lesssim  Z^{-2}(t)\le\frac{o_{t\to\infty}(\ln t)}{x-X_{\frac{\theta}{8}}(t)+Z(t)},
     \end{equation*}
     where $o_{t\to\infty}(\ln t)$ is uniform in $x$ in the region.
    For $x\in \left[X_{\frac{\theta}{8}}(t)+1,\infty \right)$, we have
     \begin{equation*}
    \mathcal{Q}[\overline{u}](t,x)\le\sup_{y\in[x-1,x+1]}\partial_{xx}w_r(t,y)\le \frac{\theta Z(t)}{4\left[x-X_{\frac{\theta}{8}}(t)+Z(t)-1\right]^3}\le \frac{o_{t\to\infty}(\ln t)}{x-X_{\frac{\theta}{8}}(t)+Z(t)}.
     \end{equation*}
    As a result, we have 
     \begin{equation*}
   II_1\le \frac{o_{t\to\infty}(\ln t)}{x-X_{\frac{\theta}{8}}(t)+Z(t)}.
     \end{equation*}
    
    \par
    For $1\le y\le {\frac{1}{2}\left(x-X_{\frac{\theta}{8}}(t)+Z(t)\right)}$, if $x\ge X_{\frac{\theta}{8}}(t)+Z(t)$ then $x-y\ge X_{\frac{\theta}{8}}(t)$ and 
     \begin{equation*}
    \begin{aligned} 
    \overline{u}(t,x-y)-\overline{u}(t,x)&=\frac{\theta Z(t)y}{8\left(x-y-X_{\frac{\theta}{8}}(t)+Z(t)\right)\left(x-X_{\frac{\theta}{8}}(t)+Z(t)\right)}\\
    &\le \frac{\theta Z(t)y}{4\left(x-X_{\frac{\theta}{8}}(t)+Z(t)\right)^2}.
        \end{aligned}
     \end{equation*}
    Let us show that for $X_{\frac{\theta}{8}}(t)\le x\le X_{\frac{\theta}{8}}(t)+Z(t)$ we also have 
     \begin{equation*}\overline{u}(t,x-y)-\overline{u}(t,x)\le \frac{\theta Z(t)y}{\left[x-X_{\frac{\theta}{8}}(t)+Z(t)\right]^2}.  \end{equation*}
    We claim that 
     \begin{equation*}
    \sup_{\left[X_{\frac{\theta}{8}}(t)-\frac{1}{2}Z(t),X_{\frac{\theta}{8}}(t)+Z(t)\right]}|\partial_x\overline{u}(t,\cdot)|\le \frac{2}{rt\ln t}.
     \end{equation*}
    Indeed, for $z\in \left[X_{\frac{\theta}{8}}(t)-\frac{1}{2}Z(t),X_{\frac{\theta}{8}}(t)\right]$,
     \begin{equation*}
    |\partial_x\overline{u}(t,z)|=\frac{\theta}{8\ln t\left[z-X_{\frac{\theta}{2}}(t)+1\right]}+\frac{1}{rt\ln t}\le \frac{\theta}{8\ln t\left[X_{\frac{\theta}{8}}(t)-X_{\frac{\theta}{2}}(t)-\frac{1}{2}Z(t)+1\right]}+\frac{1}{rt\ln t}.
     \end{equation*}
    It follows from $X_{\frac{\theta}{8}}(t)-X_{\frac{\theta}{2}}(t)\ge \frac{3\theta}{16}rt\ln t$ for $t\ge t_1$ and $Z(t)\le \frac{\theta r t\ln t}{8}$ for $t\ge e$ that for $t\ge t_1$,
     \begin{equation*}
X_{\frac{\theta}{8}}(t)-X_{\frac{\theta}{2}}(t)-\frac{1}{2}Z(t)\ge\frac{\theta}{8}rt\ln t.
     \end{equation*}
    Thus, for $t\ge t_1$ and $z\in \left[X_{\frac{\theta}{8}}(t)-\frac{1}{2}Z(t),X_{\frac{\theta}{8}}(t)\right]$,
     \begin{equation*}
    |\partial_x\overline{u}(t,x)|\le \frac{2}{rt\ln t}.
     \end{equation*}
  For $z\in\left[X_{\frac{\theta}{8}}(t),X_{\frac{\theta}{8}}(t)+Z(t)\right]$, by \Cref{prop:Zte} we also have
     \begin{equation*}
    |\partial_x\overline{u}(t,z)|=\frac{\theta Z(t)}{8\left[z-X_{\frac{\theta}{8}}(t)+Z(t)\right]^{2}}\le \frac{\theta}{8 Z(t)}\le \frac{2}{rt\ln t}.
     \end{equation*}
    By \Cref{prop:Zte}, for $X_{\frac{\theta}{8}}(t)\le x\le X_{\frac{\theta}{8}}(t)+Z(t)$, we have 
     \begin{equation*}
    \frac{\theta Z(t)}{\left[x-X_{\frac{\theta}{8}}(t)+Z(t)\right]^2}\ge \frac{\theta}{4Z(t)}\ge \frac{2}{rt\ln t}.
     \end{equation*}
 For $t\ge  t_1$ and $X_{\frac{\theta}{8}}(t)\le x\le X_{\frac{\theta}{8}}(t)+Z(t)$, by $X_{\frac{\theta}{8}}(t)-\frac{1}{2}Z(t)\le x-y\le X_{\frac{\theta}{8}}(t)+Z(t)$,
    we then have 
  \begin{equation*}
    \overline{u}(t,x-y)-\overline{u}(t,x)= -\int_{x-y}^{x}\partial_x\overline{u}(t,z) \dd z \le  \frac{\theta Z(t)y}{\left[x-X_{\frac{\theta}{8}}(t)+Z(t)\right]^2}. 
  \end{equation*}
    Thus, using $J(y)\le \j_0 y^{-2}$ for $y\ge 1$,
     \begin{equation*}
     II_2\le \frac{\j_0\theta Z(t)\ln \left[x-X_{\frac{\theta}{8}}(t)+Z(t)\right]}{\left(x-X_{\frac{\theta}{8}}(t)+Z(t)\right)^2}.
     \end{equation*}
   Since $z\mapsto \frac{\ln z}{z}$ is decreasing on $[e,\infty)$, for $t$ large enough, say $t\ge \bar t_2\ge t_2$, we have 
     \begin{equation*}
    II_2\le \frac{\j_0\theta Z(t)\ln \left[x-X_{\frac{\theta}{8}}(t)+Z(t)\right]}{\left(x-X_{\frac{\theta}{8}}(t)+Z(t)\right)^2}\le \frac{\j_0\theta \ln Z(t)}{x-X_{\frac{\theta}{8}}(t)+Z(t)}\le \frac{2\j_0\theta \ln t}{x-X_{\frac{\theta}{8}}(t)+Z(t)}.
     \end{equation*}
    For $II_3$, using $J(y)\le \j_0 y^{-2}$ for $y\ge 1$ again, we have
     \begin{equation*}
    II_3\le \frac{4\j_0}{x-X_{\frac{\theta}{8}}(t)+Z(t)}.
     \end{equation*}
    Therefore, collecting the above estimations of $II_1$, $II_2$ and $II_3$, for all $t\ge \bar t_2$, we arrive at
     \begin{equation*}
    \mathcal{D}[\overline{u}](t,x)\le\frac{\mathcal{C}_1\ln t}{x-X_{\frac{\theta}{8}}(t)+Z(t)},
     \end{equation*}
    by $\mathcal{C}_1=2\j_0\theta+4\j_0+1$.
\end{proof}

Now, let us show that $\overline{u}$ is a supersolution to \eqref{ceq}. 
\begin{proposition}\label{prop-super-cc}
    For $t\ge \bar t:=\max\{t_1, \bar t_0,\bar t_1,\bar t_2\}$, if $r\ge \bar r:=\max\left\{2\j_0,2\theta \rho+ \frac{8\mathcal{C}}{\theta},\frac{96\j_0}{\theta},\frac{256\mathcal{C}_1}{\theta^2} \right\}$, then the function $\overline{u}(t,\cdot)$ is a supersolution to \eqref{ceq} on $[X_1(t),+\infty)$.
\end{proposition}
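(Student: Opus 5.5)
We follow the pattern of \Cref{prop:supersolsubcrit}: we split $[X_1(t),\infty)$ into the three zones of \Cref{dec}. On each zone we compare the lower bound on $\partial_t\overline u$ from \Cref{pw1} with the upper bound on $\mathcal D[\overline u]$ from \Cref{dec}. The nonlinearity is handled through $f(z)\le\rho z^2$ on $(0,1)$, which follows from \Cref{hypo_f} since $f$ is Lipschitz and vanishes on $[0,\theta]$. Below $\theta$ we simply use $f=0$.

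\textbf{Zone 1: $[X_1(t),X_{\frac{\theta}{2}}(t)]$.} Here $\overline u=w_l\in[\frac\theta2,1]$. By \Cref{pw1} and \Cref{dec}, for $t\ge\bar t$,
\[
\partial_t\overline u-\mathcal D[\overline u]-f(\overline u)\ge\Big(\frac{r}{2\theta}-\rho\Big)w_l^2-\mathcal C\ge\Big(\frac{r}{2\theta}-\rho\Big)\frac{\theta^2}{4}-\mathcal C .
\]
This is nonnegative as soon as $r\ge 2\theta\rho+\frac{8\mathcal C}{\theta}$. Note that $r\ge 2\theta\rho$ guarantees $\frac{r}{2\theta}-\rho\ge0$, which is what allows replacing $w_l^2$ by its minimum $\theta^2/4$.

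\textbf{Zone 2: $[X_{\frac{\theta}{2}}(t),X_{\frac{\theta}{8}}(t)]$.} Here $\overline u\le\theta/2$, so $f(\overline u)=0$. \Cref{pw1} gives $\partial_tw_m\ge\frac{\theta r/8}{x-X_{\frac\theta2}+1}+\frac1t$, and \Cref{dec} gives $\mathcal D[\overline u]\le\frac{12\j_0}{x-X_{\frac\theta2}+1}+\frac{2\j_0}{rt}$. We match the two terms separately:
\begin{itemize}
\item $\frac{\theta r}{8}\ge 12\j_0$, i.e.\ $r\ge\frac{96\j_0}{\theta}$;
\item $1\ge\frac{2\j_0}{r}$, i.e.\ $r\ge2\j_0$.
\end{itemize}

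\textbf{Zone 3: $[X_{\frac{\theta}{8}}(t),\infty)$.} Again $f(\overline u)=0$. Both bounds carry the common factor $\frac{\ln t}{x-X_{\frac\theta8}(t)+Z(t)}$: \Cref{pw1} gives the lower bound with constant $\frac{\theta^2 r}{256}$, and \Cref{dec} gives the upper bound with constant $\mathcal C_1$. So the difference is nonnegative provided $r\ge\frac{256\mathcal C_1}{\theta^2}$.

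Collecting the three zones, the choice $r\ge\bar r$ and $t\ge\bar t$ makes all three inequalities hold simultaneously.

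The only delicate point is consistency of the constants. The constants $\mathcal C$, $\mathcal C_1$ and the times $\bar t_i$ in \Cref{dec} must not depend on $r$ in a way that makes $\bar r$ circular. I would first verify this: $\mathcal C$ comes from uniform bounds on $\partial_{xx}w_l$ near $X_1$, which are $O(1/\ln t)$ uniformly in $r$, and from $\|\overline u\|_\infty\le2$. $\mathcal C_1$ is explicit and independent of $r$. The times $\bar t_i$ may depend on $r$, which is harmless since we fix $r$ first and then take $t$ large.

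Finally, the restriction to $[X_1(t),\infty)$ suffices. To the left of $X_1(t)$ we have $\overline u\ge1\ge u$, so the comparison principle argument cited before \Cref{prop:supersolsubcrit} applies.
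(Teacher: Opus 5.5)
Your proposal is correct and follows the paper's proof essentially line by line. It uses the same three-zone split, the bound $f(z)\le\rho z^2$ in Zone 1 (with $f(\overline u)=0$ in Zones 2 and 3), and the same term-by-term comparison of \Cref{pw1} against \Cref{dec}, which yields exactly the four constraints defining $\bar r$. Your extra check that $\mathcal C$ and $\mathcal C_1$ can be chosen independently of $r$, so that $\bar r$ is not circular and only the times $\bar t_i$ depend on $r$, is left implicit in the paper and is worth keeping.
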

\begin{proof}[{\bf Proof of \Cref{prop-super-cc}}]
We split space into sub-zones as follows.
\par
  {\bf \# Zone 1: $\left[X_1(t), X_{\frac{\theta}{2}}(t)\right]$.} 
By Hypothesis \ref{hypo_f}, there is $\rho >0$ such that $f(z)\le \rho z^2$ for all $z\in(0,1)$.
In view of \Cref{pw1} and \Cref{dec}, for $t\ge t_1$ we have
 \begin{equation*}
\partial_t \overline{u}-\D[\overline{u}]-f(\overline{u})\ge\frac{r}{2\theta} \overline{u}^2 -\mathcal{C}-\rho\overline{u}^2\ge \left(\frac{r}{2\theta}-\rho\right)\overline{u}^2 -\mathcal{C}\ge \left(\frac{r}{2\theta}-\rho\right)\frac{\theta^2}{4} -\mathcal{C}\ge 0,
 \end{equation*}
as long as $r\ge 2\theta \rho+ \frac{8\mathcal{C}}{\theta}$.
\par {\bf \# Zone 2: $\left[X_{ \frac{\theta}{2}}(t), X_{ \frac{\theta}{8}}(t)\right]$.} 
Since $f(\overline{u}(t,\cdot))=0$ for $x\ge X_{\frac{\theta}{2}}(t)$, by Proposition \ref{pw1} and Proposition \ref{dec}, $t\ge \max\{t_1,\bar t_1\}$ we have
 \begin{equation*}
\begin{aligned}
&\partial_t \overline{u}-\D[\overline{u}]-f(\overline{u})\ge \frac{\frac{\theta}{8} r-12\j_0}{x-X_{\frac{\theta}{2}}(t)+1}+ \left(1-\frac{2\j_0}{r}\right)\frac{1 }{t}\ge 0,
\end{aligned}
 \end{equation*}
by taking $r\ge \max\{2\j_0,\frac{96\j_0}{\theta}\}$.

\par {\bf \# Zone 3: $\left[ X_{ \frac{\theta}{8}}(t),\infty\right)$.} 
By Proposition \ref{pw1} and Proposition \ref{dec}, $t\ge \max\{t_1,\bar t_2\}$, we achieve
 \begin{equation*}
\begin{aligned}
\partial_t \overline{u}-\D[\overline{u}]-f(\overline{u})&\ge  \frac{ \left( \frac{\theta^2r }{256} -\mathcal{C}_1\right)\ln t}{x-X_{\frac{\theta}{8}}(t)+Z(t)}\ge 0,
\end{aligned}
 \end{equation*}
by taking $r\ge \frac{256\mathcal{C}_1}{\theta^2}$.
\par Therefore, by taking $r\ge \bar r= \max\left\{2\j_0,2\theta \rho+ \frac{8\mathcal{C}}{\theta},\frac{96\j_0}{\theta},\frac{256\mathcal{C}_1}{\theta^2} \right\}$, we complete the proof.
\end{proof}

\bibliographystyle{plain}
\bibliography{cleaned_biblio}

\end{document}